\numberwithin{equation}{section}
\newcommand{\Cc}{\mathcal{C}}
\newcommand{\Fc}{\mathcal{F}}
\newcommand{\Gc}{\mathcal{G}}
\newcommand{\Hc}{\mathcal{H}}
\newcommand{\Pc}{\mathcal{P}}
\newcommand{\D}{\mathbb{D}}
\newcommand{\E}{\mathbb{E}}
\newcommand{\N}{\mathbb{N}}
\newcommand{\Pb}{\mathbb{P}}
\newcommand{\R}{\mathbb{R}}
\newcommand{\Hg}{\mathfrak{H}}
\newcommand{\Sf}{\mathscr{S}}
\newcommand{\Ef}{\mathscr{E}}
\newcommand{\ud}{\mathrm{d}}
\newcommand{\Norm}[1]{\left\lVert#1\right\rVert}
\newcommand{\Abs}[1]{\left|#1\right|}
\newcommand{\Ip}[1]{\left\langle #1 \right\rangle}
\newcommand{\Indi}[1]{\mathbbm{1}_{#1}}
\newtheorem{thm}{Theorem}[section]
\newtheorem{definition}[thm]{Definition}
\newcounter{dummy} \numberwithin{dummy}{section}
\newtheorem{Definition}[dummy]{Definition}
\newtheorem{Proposition}[dummy]{Proposition}
\newtheorem{Theorem}[dummy]{Theorem}
\newtheorem{Lemma}[dummy]{Lemma}
\newtheorem{Remark}[dummy]{Remark}
\def\1{{\rm l}\hskip -0.21truecm 1}
\begin{document}
\title[Fluctuations for matrix-valued Gaussian processes]{Fluctuations for matrix-valued Gaussian processes}
\date{\today}
\address{Mario Diaz: Instituto de Investigaciones en Matem\'{a}ticas Aplicadas y en Sistemas, Universidad Nacional Aut\'{o}noma de M\'{e}xico, Mexico City, Mexico.}
\email{mario.diaz@sigma.iimas.unam.mx}
\address{Arturo Jaramillo: Centro de Investigaci\'on en Matem\'aticas A.C., Calle Jalisco s/n, CP 36240, Guanajuato, Mexico.}
\email{jagil@cimat.mx}
\address{Juan Carlos Pardo: Centro de Investigaci\'on en Matem\'aticas A.C., Calle Jalisco s/n, CP 36240, Guanajuato, Mexico.}
\email{jcpardo@cimat.mx}

\keywords{Malliavin calculus, Gaussian matrix-valued processes, Central limit theorem, free probability.}

\date{\today}

\author{Mario Diaz, Arturo Jaramillo, Juan Carlos Pardo}
\date{\today}

\subjclass[2010]{60G15; 60B20; 60F05; 60H07; 60H05}
\keywords{Malliavin calculus, matrix-valued Gaussian  processes, central limit theorem, Skorokhod integration, Gaussian orthogonal ensemble.}

\selectlanguage{english}
\begin{abstract}
We consider a symmetric matrix-valued  Gaussian  process $Y^{(n)}=(Y^{(n)}(t);t\ge0)$ and its empirical spectral measure process $\mu^{(n)}=(\mu_{t}^{(n)};t\ge0)$. Under some mild conditions on the covariance function of $Y^{(n)}$, we find an explicit expression for the limit distribution of
$$Z_F^{(n)} := \left( \big(Z_{f_1}^{(n)}(t),\ldots,Z_{f_r}^{(n)}(t)\big) ; t\ge0\right),$$
where $F=(f_1,\dots, f_r)$, for $r\ge 1$, with each component belonging to a large class of test functions, and
$$ Z_{f}^{(n)}(t) := n\int_{\R}f(x)\mu_{t}^{(n)}(\ud x)-n\E\left[\int_{\R}f(x)\mu_{t}^{(n)}(\ud x)\right].$$
More precisely, we establish the stable convergence of $Z_F^{(n)}$  and determine its limiting distribution.  An upper bound for the total variation distance  of the law of $Z_{f}^{(n)}(t)$ to its limiting distribution, for a test function $f$ and $t\geq0$ fixed, is also given.
\end{abstract}
\maketitle
\selectlanguage{french}
\begin{abstract}
Nous consid\'erons un processus gaussien sym\'etrique \`a valeurs matricielles  $Y^{(n)}=(Y^{(n)}(t);t\ge0)$ et son processus des mesures spectrales empiriques $\mu^{(n)}=(\mu_{t}^{(n)};t\ge0)$. Dans des conditions assez faibles sur la fonction de covariance de  $Y^{(n)}$ nous trouvons une expression explicite pour la distribution limite de
$$Z_F^{(n)} := \left( \big(Z_{f_1}^{(n)}(t),\ldots,Z_{f_r}^{(n)}(t)\big) ; t\ge0\right),$$
o\`u $F=(f_1,\dots, f_r)$,  pour $r\ge 1$ o\`u chaque composant appartient \`a une grande classe des fonctions test, et 
$$ Z_{f}^{(n)}(t) := n\int_{\R}f(x)\mu_{t}^{(n)}(\ud x)-n\E\left[\int_{\R}f(x)\mu_{t}^{(n)}(\ud x)\right].$$
Plus pr\'ecisément, nous \'etablissons la convergence stable de $Z_F^{(n)}$  et nous d\'eterminons sa distribution limite. Une borne sup\'erieure pour la distance en variation totale de la loi de $Z_{f}^{(n)}(t)$  vers sa distribution limite, pour une fonction test $f$  et $t\geq0$ fixe , est \'egalement donn\'ee.
\end{abstract}

\section{Introduction}\label{sec:intro}
For a given positive integer $n$, we denote by $\R^{n\times n}$ the set of real matrices of dimension $n\times n$ and consider a sequence of processes $Y^{(n)}=(Y^{(n)}(t);\ t\geq0)$, taking values in $\R^{n\times n}$, defined in a given probability space $(\Omega, \Fc,\Pb)$. Assume that for every $n\in\N$ and $t\geq0$, the random matrix $Y^{(n)}(t)=[Y_{i,j}^{(n)}(t)]_{1\leq i,j\leq n}$ is real and symmetric whose entries are determined by
\begin{align}\label{eq:Y}
Y_{i,j}^{(n)}(t)
  &=\left\{\begin{array}{cc} \frac{1}{\sqrt{n}}X_{i,j}(t)& \text{ if }\ i< j,\\\frac{\sqrt{2}}{\sqrt{n}}X_{i,i}(t)& \text{ if }\ i=j,\end{array}\right.
\end{align}
where $X_{i,j}=(X_{i,j}(t);\ t\geq0)$, for $i\leq j$, is a collection of i.i.d.~centered Gaussian processes with covariance function $R(s,t)$. Namely, the processes $\{X_{i,j};i\leq j\}$ are jointly Gaussian, centered and satisfy
\begin{equation*}
\phantom{\text{for }i\leq j\text{ and }l\leq k,} \quad \quad \E\left[X_{i,j}(t)X_{l,k}(s)\right] =\delta_{i,l}\delta_{j,k}R(s,t), \qquad \text{for }i\leq j\text{ and }l\leq k,
\end{equation*}
where $\delta_{i,l}$ denotes the Kronecker delta, i.e., $\delta_{i,l} = 1$ if $i=l$ and $\delta_{i,l} = 0$ otherwise.  For convenience, we assume without loss of generality that $R(1,1)=1$. Due to well known distributional symmetries exhibited by $Y^{(n)}(t)$, see, e.g., \cite[Sec.~2.5]{anderson2010introduction}, in the sequel we  refer to $Y^{(n)}$ as a Gaussian Orthogonal Ensemble process, or GOE process for short. We denote by $\lambda_{1}^{(n)}(t)\geq \cdots\geq \lambda^{(n)}_{n}(t)$ the ordered eigenvalues of $Y^{(n)}(t)$ and by $ \mu_t^{(n)}$ its associated empirical spectral distribution, defined by
 \[
 \mu_t^{(n)}(\ud x)=\frac{1}{n}\sum_{i=1}^{n}\delta_{\lambda_{i}^{(n)}(t)}(\ud x),
 \]
where $\delta_{z}(\ud x)$ denotes the Dirac measure centered at $z$, i.e., the probability measure characterized by $\delta_z(\{z\}) = 1$. 

\noindent This manuscript extends to a second-order level the recent work by Jaramillo et al.~\cite{jaramillo2018convergence} where the convergence in probability, under  the topology of uniform convergence over compact sets, of the empirical spectral measure processes $\mu^{(n)} := (\mu_t^{(n)};t\geq0)$  is established and  its limit is characterized in terms of its Cauchy transform. Our goal, here, is to  provide a functional central limit theorem for the process 
\begin{align}\label{eq:fluct2}
\left(n\int_{\R}F(x)\mu_t^{(n)}(\ud x)-n\E\bigg[\int_{\R}F(x)\mu_t^{(n)}(\ud x)\bigg]\ ;\ t\geq0\right),
\end{align}
where $F:\R\rightarrow\R^r$ is a sufficiently regular test function and $r\geq 1$. In order to setup an appropriate context for stating our main results (see Section~\ref{Sec:contrib}), we review briefly some of the  literature related to the study of the asymptotic properties of the process \eqref{eq:fluct2}.

\noindent Our starting point is the celebrated Wigner Theorem \cite{wigner1955characteristic,wigner1957characteristic}, which asserts that for every $\varepsilon>0$ and every element $f$ belonging to the set $C_b(\R)$ of continuous and bounded functions,
\begin{equation}
\label{eq:WignerThm}
\lim_{n\to\infty} \Pb\left(\left|\int_\R f(x) \mu_1^{(n)}(\ud x) - \int_\R f(x) \mu_1^{sc}(\ud x)\right| > \epsilon \right) = 0,
\end{equation}
where $\mu_\sigma^{sc}$, for $\sigma>0$, denotes the scaled semicircle distribution 
\begin{align*}
\mu_{\sigma}^{sc}(\ud x)
  &:=\frac{\Indi{[-2\sigma,2\sigma]}(x)}{2\pi\sigma^2}\sqrt{4\sigma^2-x^2}\ud x.
\end{align*}
In other words, we have that $\mu_1^{(n)}$ converges weakly in probability to the standard semicircle distribution. 
Since its publication, Wigner's theorem has been generalized and extended in many different directions.
Given that the aim of this paper is the study the asymptotic law of \eqref{eq:fluct2}, we now recall some developments regarding the fluctuations of $\int_\R f(x) \mu_1^{(n)}(\ud x)$ around its mean and those that describe the properties of the sequence of measure-valued processes $(\mu_{t}^{(n)} ; t\geq0)$.

 Despite the fact that our paper deals exclusively with  GOE processes, we also mention for the sake of completeness, some representative results on other type of ensembles, with special emphasis on the Gaussian Unitary Ensemble process, GUE process for short. That is to say,  a matrix-valued process whose construction is analogous to that of $Y^{(n)}$, with the exception that $Y^{(n)}(t)$ is Hermitian for all $t\geq0$, the factor $\sqrt{2}$ appearing in \eqref{eq:Y} is replaced by 1 and the real Gaussian processes $X_{i,j}(t)$, for $i<j$,  are replaced by complex Gaussian processes whose real and imaginary parts are independent copies of $X_{1,1}$.

In the GOE case, the problem of studying $\mu_{t}^{(n)}$, as a function of the variable $t\geq0$, was first addressed  by Rogers and Shi \cite{rogers1993interacting}, and  C\'epa and L\'epingle  \cite{cepa1997diffusing} in the specific case when  the processes $X_{i,j}$'s are standard Brownian motions. More recently, when the $X_{i,j}$'s are Gaussian processes,  Jaramillo et al.~\cite{jaramillo2018convergence} proved that under some mild conditions on the covariance function $R(s,t)$, the sequence of measure-valued processes $(\mu_t^{(n)}; t\geq 0)$ converges in probability to the process $(\mu_{R(t,t)^{\frac{1}{2}}}^{sc}; t\geq0)$, in the topology of uniform convergence over compact sets.

The above results can be seen as a type of law of large numbers, thus it is   natural to ask about the fluctuations of random variables of the form $\int_{\R}f(x)\mu_1^{(n)}(\ud x),$ with $f:\R\rightarrow\R$ belonging to a set of suitable test functions. This problem was originally addressed by Johansson in \cite{johansson1998fluctuations} for a random matrix model which includes the GOE and GUE as special cases. Afterwards, Lytova and Pastur \cite{lytova2009central} studied the fluctuations of general Wigner matrices satisfying a Lindeberg type condition. In particular, the authors in \cite{lytova2009central} proved that 
\begin{align}\label{eq:fluctuationss}
n\int_{\R}f(x)\mu_1^{(n)}(\ud x)-n\E\bigg[\int_{\R}f(x)\mu_1^{(n)}(\ud x)\bigg]
  &\stackrel{d}{\rightarrow}\mathcal{N}(0,\sigma_{f}^2),\ \ \ \ \text{as } n\rightarrow\infty,
\end{align}
where $\stackrel{d}{\rightarrow}$ denotes convergence in law and $\mathcal{N}(0,\sigma_{f}^2)$ is a centered Gaussian random variable with variance given by
\begin{align*}
\sigma_{f}^2
  &:=\frac{1}{4}\int_{\R^{2}}\left(\frac{f(x)-f(y)}{x-y}\right)^2\frac{4-xy}{(4-x^2)(4-y^2)}\mu_1^{\text{sc}}(\ud x)\mu_1^{\text{sc}}(\ud y).
\end{align*}
In addition to \cite{lytova2009central}, there have been many results related to the study of the limit in distribution  \eqref{eq:fluctuationss}, for instance Anderson and Zeitouni \cite{anderson2006clt}, Bai and Yao \cite{bai2005convergence}, Cabanal-Duvillard \cite{cabanal2001fluctuations},  Chatterjee \cite{chatterjee2009fluctuations}, Girko \cite{girko2001theory}, Guionnet \cite{guionnet2002large}, to name but a few. The techniques that have been used for this purpose are quite diverse, for instance  Johansson  \cite{johansson1998fluctuations} addresses the problem by using the joint density of $\lambda_1^{(n)},\dots, \lambda_n^{(n)}$.  Bai and Yao  \cite{bai2005convergence} used  the Cauchy-Stieltjes transform to reduce the problem  to the case where 
$$f(x)=\frac{1}{x-z},$$
for $z$ belonging to the upper complex plane. The approach followed by Lytova and Pastur \cite{lytova2009central}  consists on using the Fourier transform and an interpolation method, while the one introduced  by Cabanal-Duvillard  \cite{cabanal2001fluctuations} relies on stochastic calculus techniques.

%

On the other hand, the fluctuations of the process \eqref{eq:fluct2} with $f:\R\rightarrow\R$ belonging to a set of suitable test functions
have not been deeply studied. Indeed,  the study of \eqref{eq:fluct2} in the GOE regime has been restricted to the case where the entries of $Y^{(n)}$ are Ornstein-Uhlenbeck processes. For this case, it was proved by Israelson \cite{israelsson2001asymptotic} that not only \eqref{eq:fluct2} converges weakly to a Gaussian process, but also the process of signed measures $(n(\mu_{t}^{(n)}-\mu_t^{sc}) ;\ t\geq0)$ converges in law to a distribution-valued Gaussian process. Although \cite{israelsson2001asymptotic} established existence and uniqueness of the  limit law, it was not  characterized explicitly. 
This problem was addressed by Bender \cite{bender2008global} where the asymptotic covariance function for the limit of $(n(\mu_{t}^{(n)}-\mu_t^{sc}) ;\ t\geq0)$  was derived and its law was implicitly characterized. The case where the entries of $Y^{(n)}$ are Brownian motions has not  been addressed  yet in the GOE regime, although there are some partial results for the GUE case, as  discussed  below.

For the GUE process, the problem of determining the limit of $\{\mu^{(n)}\ ;\ n\geq 1\}$ has been only explicitly addressed for the case where $Y^{(n)}$ is a Dyson Brownian motion. That is to say, when the $X_{i,j}$'s, for $i<j$, are standard complex Brownian motions or equivalently, when the covariance function of $X_{1,1}$ is of the form $R(s,t)=s\wedge t$. For this type of matrices, the techniques from \cite{rogers1993interacting} and \cite{cepa1997diffusing} can  still be applied, leading to an analogous result as in the GOE case (the reader is referred to \cite[Section~4.3]{anderson2010introduction} for a complete proof of this fact).

 The problem of studying the limiting distribution of \eqref{eq:fluct2} in the GUE regime has been addressed, simultaneously with the GOE case, in the aforementioned papers \cite{anderson2006clt,bai2005convergence,bai2004clt,chatterjee2009fluctuations,guionnet2002large,johansson1998fluctuations}. Unfortunately, it   has been restricted to the cases where $Y^{(n)}$ is either a Dyson Brownian motion or an Ornstein Uhlenbeck matrix-valued process. For the Brownian motion case, it was proved by P\'{e}rez-Abreu and Tudor in \cite{perez2007functional} that the sequence of processes \eqref{eq:fluct2}
converges towards a Gaussian process in the topology of uniform convergence over compact sets. However, the shape of the covariance function of the limiting process was not described in an explicit closed form. The Ornstein Uhlenbeck matrix-valued  case was addressed simultaneously with the GOE regime in the aforementioned paper \cite{bender2008global}.

Finally,  we would like to mention some additional developments related to the fluctuations of other random matrix ensembles. For instance, we mention the work of  Guionnet \cite{guionnet2002large}, where among other things, a central limit theorem for Gaussian band matrix models is obtained. 
This result was later extended by Anderson and Zeitouni \cite{anderson2006clt} to the more general case of band matrix models whose on-or-above diagonal entries are independent but neither necessarily identically distributed nor necessarily all with  the same variance. The approach used in \cite{anderson2006clt} was based on combinatorial enumeration, generating functions and concentration inequalities. Another related topic is the one introduced by Diaconis  and Shahshahani \cite{diaconis1994eigenvalues}, and further developed by Diaconis and Evans \cite{diaconis2001linear}, which consists on the study of fluctuations of orthogonal, unitary and symplectic Haar matrices. The main tool that was used for solving this problem is the method of moments, but the computations are more complicated in comparison to the GOE and GUE case due to the lack of independence between the matrix entries. The study of the fluctuations of unitary matrices was further developed by L\'{e}vy and Ma\"{i}da \cite{levy2010central}. Specifically, the fluctuations of a Brownian motion on the unitary group were studied using tools from stochastic calculus, similar to those used by Cabanal-Duvillard in \cite{cabanal2001fluctuations}, and tools from second-order free probability introduced by Mingo, \'{S}niady and Speicher in \cite{mingo2007second}. More recently, C\'{e}bron and Kemp \cite{cebron2014fluctuations} took a geometric approach to study the fluctuations of a family of diffusion processes on the general linear group which includes both the standard and unitary Brownian motions as special cases. We would also like to mention the paper of Bai and Silverstein \cite{bai2004clt} which is devoted to the study of  the fluctuations of sample covariance matrices, as well as  the paper of Diaz et al.~\cite{diaz2017global}, where a central limit theorem for block Gaussian matrices is derived by means of a combinatorial analysis of the second-order Cauchy transform.  Last but not least,  we mention the work of Unterberger in \cite{unterberger2018global}, which is closely related to \cite{israelsson2001asymptotic} and \cite{bender2008global}, and deals with the problem of determining the asymptotic law of a suitable renormalization of the empirical distribution process of a generalized Dyson Brownian motion. As in \cite{israelsson2001asymptotic} and \cite{bender2008global}, it is proved in \cite{unterberger2018global} that the aforementioned renormalization converges to a distribution-valued Gaussian process, although an explicit expression for the covariance of the limiting Gaussian distribution was not provided.

\section{Main results}\label{Sec:contrib}
\noindent As we mentioned before and motivated by the aforementioned results, we devote this manuscript to prove a central limit theorem for \eqref{eq:fluct2} which holds for GOE processes with a general covariance function $R(s,t)$, where the fluctuations are parametrized by a time variable $t$ and a general vector valued test function $F$. As a consequence,  we also provide an upper bound  for the total variation  distance  of $Z_{f}^{(n)}(t)$ and its limit distribution. As an additional improvement, all the limit theorems presented here are stated in the context of stable convergence, which is an extension of the convergence in law first introduced by Renyi in \cite{Renyi1} and whose definition is given below.

\begin{Definition}\label{def:stable}
Assume that $\{\eta_n; n\ge 1\}$ is a sequence of random variables defined on $(\Omega, {\mathcal{F}}, \mathbb{P})$ with values on a complete and separable metric space $S$ and $\eta$ is an $S$-valued  random variable defined on the enlarged probability space $(\Omega, {\mathcal{G}}, \mathbb{P})$.  We say that $\eta_n$ {\em converges stably} to $\eta$ as $n \to \infty$, if for any continuous and bounded function $g:S \rightarrow {\mathbb R}$ and any ${\mathbb R}$-valued, ${\mathcal{F}}$-measurable bounded random variable $M$, we have
\begin{align}\label{eq:stableconveq}
\lim_{n\to \infty} {\mathbb E} \left[g(\eta_n) M\right]
  &= {\mathbb E}\left[g(\eta) M\right].
\end{align}
We denote the stable convergence of $\{\eta_n, n\ge 1\}$ towards $\eta$ by $\eta_n \stackrel{\mathcal{S}}{\longrightarrow}\eta$.
\end{Definition}
\noindent 
\begin{Remark}\label{ref:rem1}
Observe that by taking the variable $M$ identically equal to one in the definition above, one can easily deduce that the topology of stable convergence is finer than that of the weak convergence. In addition, by taking $\eta$ to be independent of $\mathcal{F}$ (this will be consistent with our use of definition \ref{def:stable}), the expectation in right hand side of \eqref{eq:stableconveq} splits the product in the form 
$$ {\mathbb E}\left[g(\eta) M\right]
= {\mathbb E}\left[g(\eta)]\E[ M\right]$$
giving a notion of asymptotic independence of the limiting random variable and the initial underlying $\sigma$-algebra $\mathcal{F}$. A powerful implementation of this instance of stable convergence arises when studying mixed Gaussian processes, as ilustrated in \cite{CorNuWoe}, \cite{BinNourNualart} and \cite{HaJaNualart}. 
\end{Remark}

In the absence of the martingale property, stochastic calculus approaches for the study of the fluctuations in \eqref{eq:fluct2} (e.g., \cite{bender2008global}, \cite{israelsson2001asymptotic}, \cite{perez2007functional}, \cite{unterberger2018global}) might not be straightforward to generalize. Specifically, Knight's theorem, which is the most common tool for deriving functional limit theorems, cannot be directly applied to processes arising from an underlying source of randomness that lacks of a martingale structure. A particular instance of such a process is the case where $X_{1,1}$ is a fractional Brownian motion with Hurst parameter different from $1/2$. To overcome this difficulty, we use techniques  from the theory of Malliavin calculus which  have been quite effective for studying limit distributions of functionals of Gaussian processes, see for instance, the monograph of Nourdin and Peccati \cite{nourdin2012normal} for a presentation of the recent advances in these topics. We would also like to emphasize that the results here presented  are only proved for real symmetric matrices while those considered in \cite{cabanal2001fluctuations} and \cite{perez2007functional} hold for complex Hermitian matrices. It is worth mentioning that although a Malliavin calculus methodology can be established in the GUE regime as well, such techniques must be applied to each entry of the underlying stochastic matrix, which causes the computations to increase too much in their complexity when passing from orthogonal to unitary ensembles. This makes the GUE calculations intractable to us. For this reason, we have decided to focus momentarily on the GOE case, leaving the study of the remaining ensembles as an interesting open problem for future research.

 In order to present our main results, we introduce the following notation. 
For a fixed covariance function $R(s,t)$, we define its associated standard deviation $\sigma_s$, and correlation coefficient $\rho_{s,t}$, by 
\begin{align}\label{eq:Rfunctionals}
\sigma_{s}
  &:=\sqrt{R(s,s)}
	\qquad\textrm{and}\qquad
\rho_{s,t}:=\frac{R(s,t)}{\sigma_{s}\sigma_{t}}, \qquad \textrm{for}\quad t,s\ge 0.
\end{align}
Consider the set of test functions
\begin{align}\label{eq:Pmathcaldef}
\mathcal{P}
  &:=\{f\in\Cc^4(\R;\R)\ :\, f^{(4)}\ \text{ has polynomial growth}\}.
\end{align}
For $f\in \mathcal{P}$, let $Z^{(n)}_f=(Z_f^{(n)}(t); t\geq0)$ be given by
\begin{align}\label{eq:Ztfn}
Z_f^{(n)}(t)
  &:=n\left(\int_{\R}f(x)\mu_{t}^{(n)}({\rm d}x)-\E\left[\int_{\R}f(x)\mu_{t}^{(n)}({\rm d}x)\right]\right).
\end{align}
Similarly, if $\Pc^{r}$ denotes the $r$-th cartesian product of $\Pc$ and $F:=(f_{1},\dots, f_r)\in\Pc^r$, we define the process $Z_{F}^{(n)}=(Z_F^{(n)}(t);  t\geq0)$, as follows
\begin{align*}
Z_F^{(n)}(t)
  &:=\left(Z_{f_{1}}^{(n)}(t),\dots, Z_{f_{r}}^{(n)}(t)\right).
\end{align*}
Our goal consists on determining asymptotic properties of the law of $Z_F^{(n)}(t)$. We would like to briefly comment on the generality of the family of test functions $\mathcal{P}$ that we are utilizing. First we observe that $\Pc$ contains all functions $f$ of the form  
\begin{enumerate}
    \item $f(x)=\mathfrak{R}(\frac{1}{z-x})$ and $f(x)=\mathfrak{I}(\frac{1}{z-x})$, for $z$ taken  in the upper complex semi plane and where $\mathfrak{R}(z)$ and $\mathfrak{I}(z)$ denote the real and imaginary parts of $z$, respectively. In particular, $Z_F^{(n)}(t)$ can be taken as a tuple of Cauchy transforms for the spectral empirical distributions $\mu_{t}^{(n)}$.
    \item $f(x)=p(x)$, where a polynomial in several variables with arbitrary degree. In particular, $Z_F^{(n)}(t)$ can be taken as a tuple of linear combination of mixed moments of $\mu_{t}^{(n)}$).
\end{enumerate}
The choice of the set $\Pc$ becomes natural when we face the problem of proving the property of sequential compactness for $Z_F^{(n)}(t)$, presented in Section \ref{sec:tight}. More precisely, the fact that we make use of the stochastic equation \cite[Lemma~3.1]{jaramillo2018convergence} (which involves derivatives of order one), as well as a mean value theorem for the difference quotient appearing within such equation (which  induces the requirement of an additional derivative on the underlying test function) and a second order Malliavin derivative that is applied to one of the terms of the resulting object (see Equation \eqref{eq:D2Gftinc}), lead us to consider the set $\Pc$ as the most convenient candidate for the family of test functions. We clarify however, that we don't have any evidence indicating that the conditions required for the test functions are sharp, and it is possible that the results we present below can be extended to a framework that allows more generality on $\Pc$. A key step in determining the limit law of $Z_F^{(n)}(t)$, as  $n$ increases,  consists on describing the asymptotic behaviour of the covariance 
\begin{align}\label{eq:limitcovunk}
\lim_{n\rightarrow\infty}\text{Cov}\Big[Z_{f}^{(n)}(s),Z_{g}^{(n)}(t)\Big],
\end{align}
for $f,g\in\Pc$ and $s,t>0$. This problem was addressed by Pastur and Shcherbina in \cite{pastur2011eigenvalue} for the case  $s=t$, where it was proved that 
\begin{align}\label{eq:PasturSherbina}
\lim_{n\rightarrow\infty}\text{Cov}\big[Z_f^{(n)}(s),Z_g^{(n)}(s)\Big]
  &=\frac{1}{2\pi}\int_{[-2\sigma_s,2\sigma_s]^2}\frac{\Delta f}{\Delta\lambda}\frac{\Delta g}{\Delta\lambda}\frac{4\sigma_{s}^2-\lambda_1\lambda_2}{\sqrt{4\sigma_{s}^2-\lambda_1^2}\sqrt{4\sigma_{s}^2-\lambda_2^2}}\ud\lambda_1\ud\lambda_2,
\end{align}
where $\Delta f:=f(\lambda_1)-f(\lambda_2)$ and $\Delta\lambda:=\lambda_1-\lambda_2$. 
Up to our knowledge, there is  no analog of the formula \eqref{eq:PasturSherbina} for  $s\neq t$, so we have devoted Section \ref{sec:covariance} to the development of a new technique for studying  the limit \eqref{eq:limitcovunk}. Our approach is also  based on Malliavin calculus together with properties of Chebyshev polynomials and free Wigner integrals. The aforementioned objects have been crucial for the study of limit theorems obtained as functionals of a free Brownian motion (se for instance \cite{kemp2012wigner}). However, the use of free calculus for studying the asymptotic covariances of linear statistics is presented for the first time in this manuscript and has an interest on its own. In order to make this more precise, lets introduce some notation. Let $U_{q}$ denote the $q$-th Chebyshev polynomial of second order in $[-2,2]$, characterized by the property 
\begin{align}\label{eq:Chebyshevdef}
U_{q}(2\cos(\theta))
  &=\frac{\sin((q+1)\theta)}{\sin(\theta)}.
\end{align}
In Lemma \ref{lem:K}, we prove that for all $-2<x,y<2$ and $0\leq z<1$, the series
\begin{align}\label{eq:Kdef0}
K_{z}(x,y)
  &:=\sum_{q=0}^{\infty}U_{q}(x)U_{q}(y)z^{q},
\end{align}
is absolutely convergent, non-negative, and satifies
\begin{align}\label{eq:kerKdef}
K_{z}\big(x,y\big)
  &=\frac{1-z^2}{z^2(x-y)^2-xyz(1-z)^2+(1-z^2)^2}.
\end{align} 
The limit \eqref{eq:limitcovunk} can then be expressed in terms of $K_{z}$, as it is  indicated below.

\begin{Theorem}\label{theorem:covariance}
Let $\rho_{s,t}$ and $\sigma_{s}$ be given as in \eqref{eq:Rfunctionals}. Then, for $f,g\in \mathcal{P}$, 
\begin{align}\label{eq:covasymptotic}
\lim_{n\rightarrow\infty}\mathrm{Cov}\Big[Z_{f}^{(n)}(s),Z_{g}^{(n)}(t)\Big]
  &=2\int_{\R^{2}}f^{\prime}(x)g^{\prime}(y)\nu^{\rho_{s,t}}_{\sigma_s,\sigma_t}(\ud x,\ud y),
\end{align}
where the measure $\nu^{\rho_{s,t}}_{\sigma_s,\sigma_t}$ is absolutely continuous with respect to the Lebesgue measure, with density $f^{\rho_{s,t}}_{\sigma_s,\sigma_t}(x,y)$, given by
\[
f^{\rho_{s,t}}_{\sigma_s,\sigma_t}(x,y)
  :=\left\{\begin{array}{ll}
	\frac{\sqrt{4\sigma_s^2-x^2}\sqrt{4\sigma_t^2-y^2}}{2\pi^2\sigma_s^2\sigma_t^2}\displaystyle\int_{0}^{1}
	K_{z\rho_{s,t}}(x/\sigma_s,y/\sigma_t)\ud z,&
\mathrm{if }\,(x,y)\in I_{s,t},\\ 0 & \mathrm{otherwise,}
\end{array}\right .
\]
where $I_{s,t}=[-2\sigma_s,2\sigma_s]\times [-2\sigma_t,2\sigma_t]$.
\end{Theorem}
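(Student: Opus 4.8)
The plan is to identify $Z_f^{(n)}(s)$ and $Z_g^{(n)}(t)$ with the centred traces $\mathrm{Tr}(f(Y^{(n)}(s)))$ and $\mathrm{Tr}(g(Y^{(n)}(t)))$ (indeed $Z_f^{(n)}(t)=\mathrm{Tr}(f(Y^{(n)}(t)))-\E[\mathrm{Tr}(f(Y^{(n)}(t)))]$) and then to interpolate in the correlation. Writing $G:=Y^{(n)}(s)/\sigma_s$ and noting that the entrywise correlation between $X_{i,j}(s)$ and $X_{i,j}(t)$ equals $\rho_{s,t}$, one realises $Y^{(n)}(t)/\sigma_t$ as $H_\rho:=\rho G+\sqrt{1-\rho^2}\,\tilde G$, where $\tilde G$ is an independent copy of the normalised GOE matrix $G$ and $\rho=\rho_{s,t}$. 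For $\rho$ between $0$ and $\rho_{s,t}$ set $C(\rho):=\mathrm{Cov}(\mathrm{Tr}(f(\sigma_s G)),\mathrm{Tr}(g(\sigma_t H_\rho)))$. Since $H_\rho$ is marginally a normalised GOE matrix for every $\rho$, the mean $\E[\mathrm{Tr}(g(\sigma_t H_\rho))]$ is $\rho$-independent, while at $\rho=0$ the matrices $G$ and $H_0=\tilde G$ are independent, so $C(0)=0$ exactly. The target is therefore $\lim_n C(\rho_{s,t})=\int_0^{\rho_{s,t}}\tfrac{\ud}{\ud\rho}\lim_n C(\rho)\,\ud\rho$.

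The derivative is supplied by Malliavin/Gaussian integration by parts. Differentiating in $\rho$ and integrating by parts against the underlying Gaussian field --- equivalently, applying Price's formula to the jointly Gaussian pair $(G,H_\rho)$, whose only $\rho$-dependence sits in the cross-covariances $\E[G_{ij}(H_\rho)_{kl}]=\tfrac{\rho}{n}(\delta_{ik}\delta_{jl}+\delta_{il}\delta_{jk})$ --- together with the matrix identity $\partial_{G_{ij}}\mathrm{Tr}(f(\sigma_s G))=\sigma_s f'(\sigma_s G)_{ji}$ and the symmetric GOE variance structure, collapses the resulting double sum over entries into a single trace (the diagonal corrections cancelling). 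I expect the leading order to be
\[
\frac{\ud}{\ud\rho}C(\rho)=\frac{2\sigma_s\sigma_t}{n}\,\E\big[\mathrm{Tr}\big(f'(\sigma_s G)\,g'(\sigma_t H_\rho)\big)\big]+o(1),
\]
where the $o(1)$ collects the ``connected'' second-derivative contributions, which carry an extra power of $n^{-1}$.

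Passing to the limit, $(G,\tilde G)$ converge to a free pair of standard semicircular elements, so $(G,H_\rho)$ converges to a semicircular family $(a,b_\rho)$ with covariance $\tau(ab_\rho)=\rho$, and $\tfrac1n\E[\mathrm{Tr}(f'(\sigma_s G)g'(\sigma_t H_\rho))]\to\tau(f'(\sigma_s a)g'(\sigma_t b_\rho))$. To evaluate this free Wigner integral I would expand $f'(\sigma_s\,\cdot)$ and $g'(\sigma_t\,\cdot)$ in the Chebyshev basis $\{U_p\}$, orthonormal for the standard semicircle law $\mu_1^{sc}$ on $[-2,2]$, and use the non-crossing-pairing identity $\tau(U_p(a)U_q(b_\rho))=\delta_{p,q}\rho^{p}$. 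Summing the series and recognising $\sum_p U_p(\xi)U_p(\eta)\rho^p=K_\rho(\xi,\eta)$ from \eqref{eq:Kdef0} gives
\[
\tau(f'(\sigma_s a)g'(\sigma_t b_\rho))=\int_{[-2,2]^2}f'(\sigma_s\xi)g'(\sigma_t\eta)\,K_\rho(\xi,\eta)\,\mu_1^{sc}(\ud\xi)\,\mu_1^{sc}(\ud\eta).
\]
Finally, substituting $\rho=z\rho_{s,t}$ in the outer integral and changing variables $\xi=x/\sigma_s,\ \eta=y/\sigma_t$ turns the semicircle densities into $\tfrac{\sqrt{4\sigma_s^2-x^2}}{2\pi\sigma_s^2}$ and $\tfrac{\sqrt{4\sigma_t^2-y^2}}{2\pi\sigma_t^2}$ and produces the inner kernel $\int_0^1 K_{z\rho_{s,t}}(x/\sigma_s,y/\sigma_t)\,\ud z$, yielding the claimed density $f^{\rho_{s,t}}_{\sigma_s,\sigma_t}$ on $I_t$.

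The main obstacle is rigour in the two limit exchanges. First, justifying $\tfrac{\ud}{\ud\rho}\lim_n C=\lim_n\tfrac{\ud}{\ud\rho}C$ and the vanishing of the connected terms requires control, uniform in $\rho$ and $n$, of the moments of $G$ and $H_\rho$; second, the convergence $\tfrac1n\E\,\mathrm{Tr}\to\tau$ and the termwise resummation of the Chebyshev series must be validated for the non-polynomial class $\Pc$, not merely for polynomials. This is precisely where the regularity encoded in $\Pc$ (four derivatives, with $f^{(4)}$ of polynomial growth) and standard GOE moment and concentration estimates enter. The cleanest route is to prove the identity first for polynomial $f,g$, where the free Wigner integral computation is exact and finite, and then extend to $\Pc$ by approximation, bounding the error through the covariance itself.
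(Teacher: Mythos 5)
Your route is, at bottom, the paper's own proof in classical-interpolation clothing. The paper computes $\mathrm{Cov}[Z_f^{(n)}(t),Z_g^{(n)}(s)]=\E[\langle -D\mathcal{L}^{-1}Z_f^{(n)}(t),DZ_g^{(n)}(s)\rangle_{\Hg^{d(n)}}]$ via \eqref{eq:deltaDFI}, then applies Mehler's formula \eqref{eq:Mehler}--\eqref{eq:Mehler2} and the substitution $z=e^{-\theta}$; since the Ornstein--Uhlenbeck semigroup acting on $g'(Y^{(n)}(t))$ is precisely the conditional expectation at the interpolated matrix $zY^{(n)}(t)+\sqrt{1-z^2}\,\widetilde{Y}^{(n)}(t)$, this is exactly your interpolation in the correlation parameter, and from that point on the two arguments coincide verbatim: the distributional reduction to $\big(\sigma_t(\rho_{s,t}zA(n)+\sqrt{1-\rho_{s,t}^2z^2}\,\widetilde{A}(n)),\,\sigma_sA(n)\big)$, Voiculescu asymptotic freeness, and the Chebyshev evaluation $\tau[U_p(a)U_q(b_\rho)]=\delta_{p,q}\rho^p$ resummed into $K_\rho$ of \eqref{eq:Kdef0} (the paper's Lemma \ref{lem:tauvoic}, including your polynomial-first/Stone--Weierstrass extension). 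One of your hedges can simply be deleted: the $o(1)$ in your formula for $\frac{\ud}{\ud\rho}C(\rho)$ is exactly zero, because the marginal law of $H_\rho$ is $\rho$-independent, so in the Gaussian integration by parts the two ``connected'' second-derivative terms enter with coefficients $\rho$ and $-\frac{\rho}{\sqrt{1-\rho^2}}\sqrt{1-\rho^2}$ and cancel identically (Price's theorem is exact here). Consequently the interchange $\frac{\ud}{\ud\rho}\lim_n=\lim_n\frac{\ud}{\ud\rho}$ that you flag as the main obstacle never arises: $C(\rho_{s,t})=\int_0^{\rho_{s,t}}\frac{\ud}{\ud\rho}C(\rho)\,\ud\rho$ holds at fixed $n$, and one passes $n\to\infty$ under the $\rho$-integral by dominated convergence, the dominating bound coming from Cauchy--Schwarz and Wigner's theorem exactly as in the paper.

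The one genuine issue is your last sentence, where a constant is silently discarded. Your (correct) derivative $\frac{\ud}{\ud\rho}C(\rho)=\frac{2\sigma_s\sigma_t}{n}\E[\mathrm{Tr}(f'(\sigma_sG)g'(\sigma_tH_\rho))]$, integrated over $\rho\in[0,\rho_{s,t}]$ and reparametrized by $\rho=z\rho_{s,t}$, leaves the prefactor $2\sigma_s\sigma_t\rho_{s,t}=2R(s,t)$ in front of $\int_0^1(\cdots)\,\ud z$; so carried through, your argument yields $2R(s,t)\int f'g'\,\ud\nu^{\rho_{s,t}}_{\sigma_s,\sigma_t}$, not the stated $2\int f'g'\,\ud\nu^{\rho_{s,t}}_{\sigma_s,\sigma_t}$, and the claim that the substitution ``yields the claimed density'' is false as literal algebra. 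Curiously, the paper's own proof loses the very same factor: $u_{k,h}(t)$ and $\omega_{k,h}(s)$ carry $\Indi{[0,t]}$ and $\Indi{[0,s]}$, whose $\Hg$-inner product equals $R(s,t)$, yet this factor does not appear in the display that follows. A sanity check with $f(x)=g(x)=x^2/2$, where $Z_f^{(n)}(s)=\frac12\mathrm{Tr}(Y^{(n)}(s)^2)-\E[\frac12\mathrm{Tr}(Y^{(n)}(s)^2)]$ and Wick's formula gives $\lim_n\mathrm{Cov}[Z_f^{(n)}(s),Z_g^{(n)}(t)]=R(s,t)^2$, agrees with your formula with the factor kept, while the statement's formula evaluates to $\sigma_s\sigma_t\rho_{s,t}=R(s,t)$ on this example; the two coincide only when $R(s,t)=1$ (e.g.\ the variance at a time with unit standard deviation, which is presumably how the slip escaped notice). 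So the correct move is the opposite of what you did: keep the factor, note explicitly that your normalization disagrees with the theorem as printed, and verify it against the quadratic test function.
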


The proof of Theorem~\ref{theorem:covariance} is deferred to Section~\ref{sec:covariance}. It relies on tools from Malliavin calculus and Voiculescu's free probability theory, both subjects are reviewed in Section~\ref{sec:chaos}.

\noindent In the sequel, we  assume that $R$ satisfies the following regularity conditions:
\begin{itemize}
\item[\textbf{(H1)}] There exists $\alpha>1$, such that for all $T>0$ and $t\in[0,T]$, the mapping $s\mapsto R(s,t)$ is absolutely continuous on $[0,T]$, and 
\begin{align*}
\sup_{0\leq t\leq T}\int_{0}^{T}\Abs{\frac{\partial R}{\partial s}(s,t)}^{\alpha}\ud s<\infty.
\end{align*}
\item[\textbf{(H2)} ]The map $s\mapsto \sigma_s^2=R(s,s)$ is continuously differentiable in $(0,\infty)$ and continuous at zero. Moreover, there exists $\varepsilon\in(0,1)$, such that the mapping $s\mapsto s^{1-\varepsilon}R^\prime(s,s)$ is bounded over compact intervals of $\R$.
\end{itemize}
\noindent As a direct consequence of \textbf{(H2)}, we have that $|R^\prime(s,s)|$ is integrable in a neighborhood of zero.  We observe that the conditions above are very mild, so the collection of processes satisfying \textbf{(H1)} and \textbf{(H2)} includes processes with very rough trajectories, such as fractional Brownian motion with Hurst parameter $H\in(0,1)$,  whose covariance function is of  the form
\begin{equation}\label{BMfrac}
    R(s,t)
  =\frac{1}{2}(s^{2H}+t^{2H}-|t-s|^{2H}),
\end{equation}
and  trajectories are H\"older continuous of order $\alpha\in(0,H)$.  In order to shed some light on the nature of the conditions we have imposed over the covariance of $X$, we anticipate to the reader that part of the proofs of our main results (in particular, the sequential compactness of the normalized linear statistics \eqref{eq:fluctuationss}) rely on techniques of stochastic integration against rough Gaussian processes, which suggests the use of \textbf{(H1)} for guaranteeing the well posedness of the so-called generalized Skorohod integral for the trajectories of $X$. This topic is discussed in detail in the paper \cite{LeiNualart} by Nualart and Lei. On the other hand, \textbf{(H2)} is an additional condition that we require for being able to handle ad-hoc computations related to the sequential compactness of $Z_{F}^{(n)}$. Moreover, under both assumptions   the convergence in probability, under  the topology of uniform convergence over compact sets, of the empirical spectral measure processes $\mu^{(n)} := (\mu_t^{(n)};t\geq0)$ holds, see Jaramillo et al.~\cite{jaramillo2018convergence}. That being said, one should observe that it is natural to expect \textbf{(H1)} and \textbf{(H2)} to be suboptimal conditions, as they are required by a particular technique we have used, rather than an intrinsic feature of the problem. We must mention however, that in practice, the level of generality that we present here encompasses  virtually all typically used models in Gaussian processes.

In order to state our main result, which is a functional central limit theorem for $Z_F^{(n)}$, we recall that the total variation distance  between two probability measures $\mu$ and $\nu$ is defined as follows
\begin{align*}
d_{TV}(\mu,\nu)
  &:=\sup_{A\in\mathcal{B}(\R)}\big|\mu(A)-\nu(A)\big|,
\end{align*}
where $\mathcal{B}(\R)$ denotes the Borel $\sigma$-algebra of $\R$. 

\begin{Theorem}\label{thm:main} 
Suppose that the processes $\{X_{i,j}; i\le j  \}$ satisfy  conditions \textbf{ (H1)} and \textbf{ (H2)}. Then, for every $F:=(f_{1},\dots, f_{r})\in \mathcal{P}^r$,  there exists a continuous $\R^{r}$-valued centered Gaussian process $\Lambda_{F}=((\Lambda_{f_{1}}(t),\dots, \Lambda_{f_{r}}(t)); t\geq0)$, independent of $\{X_{i,j}; i\le j\}$, defined on an extended probability space $(\Omega, \Gc,\Pb)$, such that
\begin{align}\label{eq:stableconvmain}
Z_F^{(n)}\stackrel{\mathcal{S}}{\longrightarrow}\Lambda_{F},
\end{align}
 in the topology of uniform convergence over compact sets. The law of the process $\Lambda_{F}$ is characterized by its covariance function, which is given by
\begin{align*}
\E\left[\Lambda_{f_i}(s)\Lambda_{f_j}(t)\right]      
  &=2\int_{\R^{2}}f_i^{\prime}(x)f_j^{\prime}(y)\nu_{\sigma_s,\sigma_t}^{\rho_{s,t}}(\ud x,\ud y),
\end{align*}
where $\nu_{\sigma_s,\sigma_t}^{\rho_{s,t}}$ is given as in Theorem \ref{theorem:covariance}. Moreover, for all $t\geq0$ and $f\in\mathcal{P}$, there exists a constant $C>0$ that only depends on $t,f$ and the law of $X$, such that 
\begin{align*}
d_{TV}(\mathcal{L}(Z_f^{(n)}(t)),\mathcal{L}(\Lambda_{f}(t)))
  &\leq \frac{C}{\sqrt{n}},
\end{align*}
where $\mathcal{L}(Z_f^{(n)}(t))$ and $\mathcal{L}(\Lambda_{f}(t))$ denote the distributions of $Z_f^{(n)}(t)$ and $\Lambda_{f}(t)$, respectively.
\end{Theorem}

We point out that  Theorem \ref{thm:main} is stated in terms of stable convergence instead of convergence in law to emphasize the fact that, as $n$ goes to infinity, $Z_F^{(n)}$ becomes asymptotically independent to any fixed event in $\Fc$, namely,
\begin{align*}
\lim_{n\rightarrow\infty}\E[\psi(Z_F^{(n)})\Indi{A}]
  &=\E[\psi(\Lambda_F)]\Pb[A],
\end{align*}
for every $A\in\mathcal{F}$ and every real-valued continuous functional $\psi$, with respect to the topology of uniform convergence over compact sets. In the spirit of Remark \ref{ref:rem1}, we would like to emphasize the potential of the stable convergence \eqref{eq:stableconvmain} to be applied in future research as a tool for determining non-central limit theorems for matrix-valued processes, in analogy with papers like \cite{CorNuWoe}, \cite{BinNourNualart} and \cite{HaJaNualart}.

We also note that when $H=1/2$ in \eqref{BMfrac} , the processes $X_{i,j}$ are Brownian motions. Hence, Theorem \ref{thm:main} can be thought of as a GOE version of Theorem~4.3 in \cite{perez2007functional}. However, the results in \cite{perez2007functional} holds only when the dimension $r$ equals one,
$f$ is a polynomial, and moreover the form of the limiting distribution is not explicit. On the other hand, Theorem \ref{thm:main} holds for all $r\in\N$ and we only require $f$ to satisfy a polynomial growth condition. In addition, the limiting distribution that we obtain is explicit.

\noindent To prove Theorem \ref{thm:main} we need to establish the convergence of the finite dimensional distributions of $Z_F^{(n)}$, as well as  the sequential compactness of $Z_F^{(n)}$ with respect to the topology of uniform convergence over compact sets, property that in the sequel will be referred to as ``tightness property''. These problems will be addressed in Sections \ref{sec:fdd} and \ref{sec:tight} respectively. The proof of the finite dimensional distributions relies on a multivariate central limit theorem, first presented in \cite{nourdin2009second} by Nourdin, Peccati and R\'eveillac. This central limit theorem is part of a series of very powerful techniques that provides convergence to Gaussian laws, and combine Malliavin calculus and Stein's method techniques. We refer the reader to \cite{nourdin2012normal} for a comprehensive presentation of these type of results.  On the other hand, due to the generality of the covariance function $R(s,t)$, the proof of the tightness property for $Z_F^{(n)}$ is a challenging problem since  Billingsley's criterion (see Theorem~12.3 in \cite{billingsley2013convergence}), a typical tool for proving tightness, requires us to compute moments of large order for the increments of $Z_F^{(n)}$. To overcome this difficulty, we use the results from \cite{jaramillo2018convergence}, to write a Skorohod differential equation for $Z_F^{(n)}$ of the type 
\begin{align}\label{eq:Zfstochprelim}
Z_F^{(n)}(t)
  &=\delta^{*}(\Indi{[0,t]}(\cdot)h(Y^{(n)}(\cdot)))+\int_{0}^{t}g(Y^{(n)}(\cdot))\ud s,
\end{align}
for some functions $h,g:\R^{n\times n}\rightarrow\R$ depending on $n$ and  where $\delta^{*}$ denotes the extended divergence (see Section \ref{sec:chaos} for a proper definition). Then we use Malliavin calculus techniques to estimate 
\begin{align}\label{eq:moments}
\E\Big[\Big|Z_F^{(n)}(t)-Z_F^{(n)}(s)\Big|^p\Big]
\end{align}
for $t>s$ and $p\geq 2$ even, which gives the tightness property. Although the Malliavin calculus perspective for proving tightness has already been explored in previous papers, see for instance Jaramillo and Nualart \cite{jaramillo2017functional} and  Harnett et al.~\cite{harnett2017symmetric}, its combination with a representation of the type \eqref{eq:Zfstochprelim} for estimating the moments \eqref{eq:moments} is a new ingredient that we have incorporated to our proof, and that seems to be quite effective in the context of matrix-valued processes.\\

\noindent The remainder of this paper is organized as follows. In Section \ref{sec:chaos}, we present some preliminaries on classical Malliavin calculus, random matrices and free Wigner integrals. Section \ref{sec:covariance} is devoted to the proof of Theorem \ref{theorem:covariance}. In Section \ref{sec:fdd} we prove the convergence of the finite dimensional distributions of $Z_F^{(n)}$ and finally, in Section \ref{sec:tight}, we prove the tightness property for $Z_F^{(n)}$.
\section{Preliminaries on Malliavin calculus and stochastic integration}\label{sec:chaos}
\subsection{Malliavin calculus for classical Gaussian processes}\label{subsec:chaos}

In this section, we establish some notation and introduce the basic operators of the theory of Malliavin calculus. Unless indicated otherwise, the material presented in this section can be found in the monographs of   Nourdin and Pecatti \cite{nourdin2012normal} and Nualart \cite{nualart2006malliavin}. Throughout this section  $X=(X_{t} , t\geq0)$ denotes a $d$-dimensional centered Gaussian process where $X_{t}=(X_{t}^{1},\dots, X_{t}^{d})$ for $t\ge0$, which is  defined on a probability space $(\Omega,\Fc,\Pb)$. Its covariance function is given by
\begin{align*}
\E\left[X_{s}^{i}X_{t}^{j}\right]
  &=\delta_{i,j}R(s,t),
\end{align*}
for some non-negative definite function $R(s,t)$ satisfying conditions \textbf{(H1)} and \textbf{(H2)} and where $\delta_{i,j}$ denotes the so-called Kronecker delta. 
We denote by $\Hg$ the Hilbert space obtained by taking the completion of the space $\Ef$ of step functions over $[0,T]$, endowed with the inner product 
\begin{align*}
\Ip{\Indi{[0,s]},\Indi{[0,t]}}_{\Hg}
  &:=  \E\left[X_{s}^{1}X_{t}^{1}\right],\qquad \text{ for }\quad 0\leq s,t\leq T.
\end{align*}
For every $1\leq j\leq d$ fixed, the mapping $\Indi{[0,t]} \mapsto X_{t}^{j}$ can be extended to a linear isometry between $\Hg$ and the closed Gaussian subspace of ${\rm L}^{2}\left(\Omega\right)$ generated by the process $X^{j}$. We  denote this isometry by $X^{j}(h)$, for $h\in\Hg$. If $h\in \Hg^{d}$ then is of the form $h=(h_{1},\dots, h_{d})$, with $h_{j}\in \Hg$, and we set $X(h):= \sum_{j=1} ^d X^{j}(h_{j})$.  Then $h\mapsto X(h)$ is a linear isometry between $\Hg^{d}$ and the closed Gaussian subspace of ${\rm L}^2\left(\Omega \right)$ generated by $X$.
 
For any integer $q\geq1$, we denote by $(\Hg^{d})^{\otimes q}$ and $(\Hg^{d})^{\odot q}$ the $q$-th tensor product of $\Hg^{d}$, and the $q$-th symmetric tensor product of $\Hg^{d}$, respectively. The $q$-th Wiener chaos	of ${\rm L}^{2}(\Omega)$, denoted by $\Hc_{q}$, is the closed subspace of ${\rm L}^{2}(\Omega)$ generated by the variables 
\[
\left (\prod_{j=1}^{d}H_{q_{j}}(X^j(v_{j}))\ \Big|\ \sum_{j=1}^{d}q_{j}=q,\text{ and } v_{1},\dots, v_{d}\in\Hg,\Norm{v_{j}}_{\Hg}=1 \right),
\]
 where $H_{q}$ is the $q$-th Hermite polynomial, defined by 
\begin{align*}
H_{q}(x)
  &:=(-1)^{q}e^{\frac{x^{2}}{2}}\frac{\text{d}^{q}}{\text{d}x^{q}}e^{-\frac{x^{2}}{2}}.
\end{align*}
For $q\in\N$, with $q\geq1$, and $h\in \Hg^{d}$ of the form $h=(h_{1},\dots, h_{d})$, with $\Norm{h_{j}}_{\Hg}=1$, we can write 
\begin{align*}
h^{\otimes q}
  =\sum_{i_{1},\dots, i_{q}=1}^{d}\hat{h}_{i_{1}}\otimes\cdots \otimes \hat{h}_{i_{q}},
\end{align*}
where $\hat{h}_{i}=(\underbrace{0,\dots, 0}_{i-1\text{\ times}},h_{i},\underbrace{0,\dots, 0}_{d-i \text{\ times}})$. For such $h$, we define the mapping 
\begin{align*}
I_{q}(h^{\otimes q})
  &:=\sum_{i_{1},\dots, i_{q}=1}^{d}\prod_{j=1}^{d}H_{q_{j}(i_{1},\dots, i_{q})}(X^j(h_{j})),
\end{align*}
where $q_{j}(i_{1},\dots, i_{q})$ denotes the number of indices in $(i_{1},\dots, i_{q})$ equal to $j$. The range of $I_{q}$ is contained in $\Hc_{q}$. Furthermore, this mapping can be extended to a linear isometry between $(\Hg^{d})^{\odot q}$ (equipped with the norm $\sqrt{q!}\Norm{\cdot}_{(\Hg^{d})^{\otimes q}}$) and $\Hc_{q}$ (equipped with the ${\rm L}^{2}(\Omega)$-norm). Such an extension is known as the multiple It\^{o} integral of order $q$ and, by abuse of notation, we denote it by $I_{q}$.

 Denote by $\Fc$ the $\sigma$-algebra generated by $X$. By the celebrated chaos decomposition theorem, every element $F \in{\rm L}^{2}(\Omega,\Fc)$  can be written as follows
\begin{align*}
F=\E\left[F\right]+\sum_{q=1}^{\infty}I_{q}(h_{q}),
\end{align*}
for some $h_{q}\in(\Hg^{d})^{\odot q}$. In what follows, for every integer $q\geq1$, we  denote by 
$$J_{q}:{\rm L}^{2}(\Omega,\Fc)\rightarrow {\rm L}^{2}(\Omega,\Fc),$$ 
the projection over the $q$-th Wiener chaos $\Hc_{q}$. 
Let $\Sf$ denote the set  of all cylindrical random variables of the form
\begin{align*}
F= g(X(h_{1}),\dots, X(h_{n})),
\end{align*} 
where $h_{j}\in\mathfrak{H}^{d}$ and $g:\R^{n}\rightarrow\R$ is an infinitely differentiable function such that $g$ and its partial derivatives have at most polynomial growth. In the sequel, we  refer to the elements of $\Sf$ as ``smooth random variables''. For every $r\geq1$, the Malliavin derivative of order $r$ of $F$ with respect to $X$, is the element of ${\rm L}^{2}(\Omega;(\Hg^d)^{\odot r})$ defined by 
\begin{align*}
D^rF
  &=\sum_{i_1,\dots, i_{r}=1}^{n}\frac{\partial^{r}g}{\partial x_{i_1}\cdots \partial x_{i_r}}(X(h_{1}),\dots, X(h_{n}))h_{i_1}\otimes\cdots\otimes h_{i_r}.
\end{align*}
For $p\geq1$ and $r\geq1$, the space $\D^{r,p}$ denotes the closure of $\Sf$ with respect to the norm $\Norm{\cdot}_{\D^{r,p}}$, defined by 
\begin{align}\label{eq:seminorm}
\Norm{F}_{\D^{r,p}}
  &:=\left(\E\left[\Abs{F}^{p}\right]+\sum_{i=1}^{r}\E\left[\Norm{D^{i}F}_{(\Hg^{d})^{\otimes i}}^{p}\right]\right)^{\frac{1}{p}}.
\end{align}
The operator $D^{r}$ can be extended to the space $\D^{r,p}$ by approximation with elements in $\Sf$. When we take $p=2$ in the seminorm \eqref{eq:seminorm}, we denote by $\delta$ the adjoint of the operator $D$, also called the divergence operator. We point out that every element $F\in \D^{1,2}$ satisfies  Poincar\'e's inequality 
\begin{align}\label{eq:poincare}
\text{Var}[F]
  &\leq \E[\|DF\|_{\Hg^d}^2],
\end{align}
where ${\rm Var}[F]$ denotes the variance of $F$ under $\mathbb{P}$.

Let ${\rm L}^{2}(\Omega;\Hg^{d})$ denote the space of square integrable random variables with values in $\Hg^{d}$. A random element $u\in {\rm L}^{2}(\Omega;\Hg^{d})$ belongs to the domain of $\delta$, denoted by $\mathrm{Dom} \, \delta $, if and only if it satisfies
\begin{align*}
\Abs{\E\left[\Ip{DF,u}_{\Hg^{d}}\right]}
  &\leq C_{u}\E\left[F^{2}\right]^{\frac{1}{2}},\ \text{ for every } F\in\D^{1,2},
\end{align*}
where $C_{u}$ is  a constant only depending on $u$. If $u\in \mathrm{Dom} \,\delta$, then the random variable $\delta(u)$ is defined by the duality relationship
\begin{align}\label{eq:dualitydeltaD}
\E\left[F\delta(u)\right]=\E\left[\Ip{DF,u}_{\Hg^{d}}\right],
\end{align}
which holds for every $F\in\D^{1,2}$.\\

Next we present a brief discussion regarding the connection between the divergence $\delta$ and the notion of stochastic integral. The reader should keep in mind that although this connection is discussed in most of the surveys on Malliavin calculus, the particular point of view that we take is perhaps specialized, so we rather recommend the paper \cite{LeiNualart} as the main reference for the remainder of this section. 
If $X$ is a $d$-dimensional Brownian motion, thus $R(s,t)=s\wedge t$ and $\Hg={\rm L}^{2}[0,T]$. In this case,  the operator $\delta$
 is an extension of the It\^o integral. Motivated by this fact, if $u$ is a random variable with values in $({\rm L}^{p}[0,T])^{d}\cap \Hg^{d}$, for some $p\geq 1$, we would like to interpret $\delta(u)$ as a stochastic integral. 
Nevertheless, the space $\Hg$ turns out to be too small for this purpose, as generally it  doesn't contain important elements $u\in({\rm L}^{p}[0,T])^{d}$, for which we would like $\delta(u)$ to make sense. To be precise,  in \cite{cheridito2005stochastic} it was shown that in the case where $X$ is a fractional Brownian motion with Hurst parameter $0<H<\frac{1}{4}$, and covariance function 
$$R(s,t)=\frac{1}{2}(t^{2H}+s^{2H}-\Abs{t-s}^{2H}),$$ 
the trajectories of $X$ do not belong to the space $\Hg$, and in particular, non-trivial processes of the form $(h(u_{s}),  s\in[0,T])$, with $h:\R\rightarrow\R$, do not belong to the domain of $\delta$. In order to overcome this difficulty, we extend the domain of $\delta$ by following the approach presented in \cite{lei2012stochastic} (see also \cite{cheridito2005stochastic}). The main idea consists on extending the definition of $\Ip{\varphi,\psi}_{\Hg}$ to the case where $\varphi\in {\rm L}^{\frac{\alpha}{\alpha-1}}[0,T]$ for some $\alpha>1$, and $\psi$ belongs to the space $\Ef$ of step functions over $[0,T]$.

 Let $\alpha>1$ be as in hypothesis \textbf{(H1)} and let $\bar{\alpha}$ be the conjugate of $\alpha$, defined by $\bar{\alpha}:=\frac{\alpha}{\alpha-1}$. For any pair of functions $\varphi\in {\rm L}^{\bar{\alpha}	}([0,T];\R)$ and $\psi\in\Ef$ of the form $\psi=\sum_{j=1}^{m}c_{j}\Indi{[0,t_{j}]}$, we define
\begin{align}\label{def:extendedIP}
\Ip{\varphi,\psi}_{\Hg}
  &:=\sum_{j=1}^{m}c_j\int_{0}^{T}\varphi(s)\frac{\partial R}{\partial s}(s,t_{j})\ud s.
\end{align}
This expression is well defined since
\begin{align*}
\Abs{\Ip{\varphi, \Indi{[0,t]}}_{\Hg}}
  &=\Abs{\int_{0}^{T}\varphi_{s}\frac{\partial R}{\partial s}(s,t)\ud s}
	\leq\Norm{\varphi}_{{\rm L}^{\bar{\alpha}}[0,T]}\sup_{0\leq t\leq T}\left(\int_{0}^{T}\Abs{\frac{\partial R}{\partial s}(s,t)}^{\alpha}\ud s\right)^{\frac{1}{\alpha}}<\infty.
\end{align*}
One should keep in mind that the notation used in definition \eqref{def:extendedIP}, is the same one that we use to describe the inner product of $\Hg$. This abuse of notation is justified by the fact that the bilinear function \eqref{def:extendedIP} coincides with the inner product in $\Hg$, when $\varphi\in\Ef$. Indeed, for $\varphi\in\Ef$ of the form $\varphi=\sum_{i=1}^{n}a_{i}\Indi{[0,t_{i}]},$ we have
\begin{align*}
\Ip{\varphi,\Indi{[0,t]}}_{\Hg}
  &=\sum_{i=1}^{n}a_{i}R(t_{i},t)
	=\sum_{i=1}^{n}a_{i}\int_{0}^{t_{i}}\frac{\partial R}{\partial s}(s,t)\ud s=\int_{0}^{T}\varphi(s)\frac{\partial R}{\partial s}(s,t)\ud s.
\end{align*}
We define the extended domain of the divergence as follows.
\begin{Definition}\label{def:extendeddelta}
Let $\Ip{\cdot,\cdot}_{\Hg}$ be the bilinear function defined by \eqref{def:extendedIP}. We say that a stochastic process $u\in {\rm L}^{1}(\Omega;{\rm L}^{\bar{\alpha}}([0,T];\R^{d}))$ belongs to the extended domain of the divergence, denoted by $\mathrm{Dom}\, \delta^{*}$, if there exists $\gamma>1$ such that 
\begin{align*}
\Abs{\E\left[\Ip{DF,u}_{\Hg^{d}}\right]}
  &\leq C_{u}\Norm{F}_{{\rm L}^\gamma(\Omega)},
\end{align*}
for any smooth random variable $F\in\Sf$, where $C_{u}$ is some constant depending on $u$. In this case, $\delta^*(u)$ is defined by the duality relationship
\begin{align*}
\E\left[F\delta^*(u)\right]=\E\left[\Ip{DF,u}_{\Hg^{d}}\right].
\end{align*}
\end{Definition}
It is important to note that for a general covariance function $R(s,t)$ and $\beta>1$, the domains $\mathrm{Dom}^{*}\,\delta$ and $\mathrm{Dom}\, \delta$ are not necessarily comparable (see Section 3 in \cite{lei2012stochastic} for further details). We also note that along the paper we use of the notation 
\begin{align}\label{eq:Skorohod_def}
\sum_{i=1}^d\int_{0}^{t} u_{s}^{i}\delta X_{s}^{i}
  &:=\delta^{*}(u\Indi{[0,t]}),
\end{align} 
for $u\in \mathrm{Dom}\,\delta^{*}$ of the form $u_t=(u^{1}_{t},\dots, u_{t}^{d})$.\\

\noindent Next, we introduce the operator $\mathcal{L}$ which is an unbounded linear mapping, defined in a suitable subdomain of ${\rm L}^{2}(\Omega,\Fc)$, taking values in ${\rm L}^{2}(\Omega,\Fc)$ and given by the formula
$$\mathcal{L}F:=\sum_{q=1}^{\infty}-q J_{q}F.$$
Moreover, the operator $\mathcal{L}$ coincides with the infinitesimal generator of the Ornstein-Uhlenbeck semigroup $(P_{\theta}, \theta\geq0)$, which is defined as follows
\begin{align*}
\begin{array}{cccc}P_{\theta}: & {\rm L}^{2}(\Omega,\Fc)&\rightarrow & {\rm L}^{2}(\Omega,\Fc)\\
                               & F                &\mapsto     & \sum_{q=0}^{\infty}e^{-q\theta}J_{q}F.\end{array}
\end{align*}
We also observe that a random variable $F$ belongs to the domain of $\mathcal{L}$ if and only if $F\in \D^{1,2}$, and $DF\in \mathrm{Dom} \, \delta$, in which case
\begin{align}\label{eq:deltaDFI}
\delta DF
  &=-\mathcal{L}F.
\end{align}
We also define the operator $\mathcal{L}^{-1}:{\rm L}^{2}(\Omega,\Fc)\rightarrow {\rm L}^{2}(\Omega,\Fc)$ by
$$\mathcal{L}^{-1}F=\sum_{q=1}^{\infty}-\frac{1}{q}J_{q}F.$$
Notice that $\mathcal{L}^{-1}$ is a bounded operator and satisfies $\mathcal{L}\mathcal{L}^{-1}F=F-\E\left[F\right]$ for every $F\in {\rm L}^{2}(\Omega)$, so that $\mathcal{L}^{-1}$ acts as a pseudo-inverse of $\mathcal{L}$. The operator $\mathcal{L}^{-1}$ satisfies the following contraction property for every $F\in {\rm L}^{2}(\Omega)$ with $\E\left[F\right]=0$, 
\begin{align*}
\E\left[\Norm{D\mathcal{L}^{-1}F}_{\Hg^{d}}^2\right]
  &\leq\E\left[F^2\right].
\end{align*}
In addition, by Meyer's inequalities (see for instance Proposition~1.5.8 in \cite{nualart2006malliavin}), for every $p>1$ there exists a constant $c_{p}>0$ such that for all $F\in \D^{2,p}$ with $\E\left[F\right]=0$,
\begin{align}\label{eq:Meyer}
\E[|\delta(D\mathcal{L}^{-1}F)|^p]^{\frac{1}{p}}
  &\leq c_{p}\Big(\E[\|D^2\mathcal{L}^{-1}F\|_{(\Hg^d)^{\otimes 2}}^p]^{\frac{1}{p}}+\Norm{\E\left[D\mathcal{L}^{-1}F\right]}_{\Hg^d}\Big).
\end{align}
Assume that $\widetilde{X}$ is an independent copy of $X$, such that both r.v.'s are defined in the product space $(\Omega\times \widetilde{\Omega}, \Fc \otimes\widetilde{\Fc},\Pb\otimes\widetilde{\Pb})$. Given a random variable $F\in {\rm L}^{2}(\Omega,\Fc)$, we can write $F=\Psi_{F}(X)$, where $\Psi_{F}$ is a measurable mapping from $\R^{\Hg^{d}}$ to $\R$, determined $\Pb$-a.s. Then, for every $\theta\geq0$ we have  Mehler's formula 
\begin{align}\label{eq:Mehler}
P_{\theta}F
  &=\widetilde{\E}\left[\Psi_{F}(e^{-\theta}X+\sqrt{1-e^{-2\theta}}\widetilde{X})\right],
\end{align}
where $\widetilde{\E}$ denotes the expectation with respect to $\widetilde{\Pb}$. The operator $-\mathcal{L}^{-1}$ can be expressed in terms of $P_{\theta}$, as follows
\begin{align}\label{eq:Mehler2}
-\mathcal{L}^{-1}F
  &=\int_{0}^{\infty}P_{\theta}F\ud\theta,\qquad \text{ for } F\,\,\text{  s.t.}\quad 
  \E\left[F\right]=0.
\end{align}
 Formulas \eqref{eq:deltaDFI}, \eqref{eq:Mehler} and \eqref{eq:Mehler2}, combined with Meyer's inequality \eqref{eq:Meyer}, allows us to write the ${\rm L}^{p}(\Omega)$-norm of any $F\in\D^{1,2}$, in the form
\begin{align*}
\left\|F-\E[F]\right\|_{{\rm L}^p(\Omega)}
  &=\left\|-\delta D\mathcal{L}^{-1}(F-\E[F])\right\|_{{\rm L}^p(\Omega)}\\
	&\leq C_p\left(\left\|\int_{0}^{\infty}DP_{\theta}[F]\ud\theta\right\|_{{\rm L}^p(\Omega;\Hg^{d})}
	+\left\|\int_{0}^{\infty}D^2P_{\theta}[F]\ud\theta\right\|_{{\rm L}^p(\Omega;(\Hg^{d})^{\otimes 2})}\right)\\
	&\leq C_p\left(\left\|\int_{0}^{\infty}e^{-\theta}P_{\theta}[DF]\ud\theta\right\|_{{\rm L}^p(\Omega;\Hg^{d})}
	+\left\|\int_{0}^{\infty}e^{-2\theta} P_{\theta}[D^2F]\ud\theta\right\|_{{\rm L}^p(\Omega;(\Hg^{d})^{\otimes 2})}\right),
\end{align*}
where $C_{p}>0$ is a universal constant only depending on $p$. Thus, using Minkowski's inequality and the contraction property of $P_{\theta}$ with respect to ${\rm L}^{p}(\Omega)$, we have that
\begin{align}\label{eq:centeredLpbound}
\left\|F-\E[F]\right\|_{{\rm L}^p(\Omega)}
  &\leq C_p\int_{0}^{\infty}e^{-\theta}\Big(\|P_{\theta}[DF]\|_{{\rm L}^p(\Omega;\Hg^{d})}+\|P_{\theta}[D^2F]\|_{{\rm L}^p(\Omega;(\Hg^{d})^{\otimes 2})}\Big)\ud\theta\nonumber\\
	&\leq C_p\int_{0}^{\infty}e^{-\theta}\Big(\|DF\|_{{\rm L}^p(\Omega;\Hg^{d})}+\|D^2F\|_{{\rm L}^p(\Omega;(\Hg^{d})^{\otimes 2})}\Big)\ud\theta\nonumber\\
	& =C_p \left(\|DF\|_{{\rm L}^p(\Omega;\Hg^{d})}+\|D^2F\|_{{\rm L}^p(\Omega;(\Hg^{d})^{\otimes 2})}\right).
\end{align}


Finally, we recall the notion of the contraction in $\Hg^{d}$. Let $\{b_j, j\ge 1 \}\subset \Hg^{d}$ be a complete orthonormal system of $\Hg^{d}$. Given $f\in(\Hg^{d})^{\odot p}$, $g\in(\Hg^{d})^{\odot q}$ and $r\in\{1,\dots, p\wedge q\}$, the $r$-th contraction of $f$ and $g$ is the element $f\otimes_{r}g\in (\Hg^{d})^{\otimes(p+q-2r)}$ defined by 
\begin{align*}
f\otimes_{r}g
  &=\sum_{i_1,\dots, i_{r}=1}^{\infty}\langle f,b_{i_1},\dots, b_{i_r}\rangle_{(\Hg^{d})^{\otimes r}}\otimes \langle g,b_{i_1},\dots, b_{i_r}\rangle_{(\Hg^{d})^{\otimes r}}.
\end{align*}
\subsection{Central limit theorem in the Wiener chaos}
The proof of the stable convergence of the finite dimensional distributions of $Z_{F}^{(n)}$ in Theorem \ref{thm:main}, is based on Theorem \ref{thm:CLTWienerchaos} below, which is a combination of the paper \cite{nourdin2010second} by Nourdin, Peccati and R\'eveillac and the paper \cite{nourdin2009second} by Nourdin, Peccati and Reinert. The proofs of these results can be found in the monograph of Nourdin and Pecatti \cite{nourdin2012normal} (see Theorems~5.3.3~and~6.1.3).
\begin{Theorem}\label{thm:CLTWienerchaos}
Fix $d\geq 1$ and consider the sequence of vectors $\{{\tt Z}_{n}=({\tt Z}_{n}^1,\dots, {\tt Z}_{n}^d)$, $n\geq 1$\}, with $\E\left[{\tt Z}_{n}^i\right]=0$ and ${\tt Z}_{n}^i\in\D^{2,4}$ for every $i\in\{1,\dots, d\}$ and $n\geq 1$. Let $N=(N_1,\dots, N_d)$ be a centered Gaussian vector with covariance $C$ which is  a symmetric and non-negative square matrix of dimension $d$. If the following conditions are fulfilled
\begin{enumerate}
\item[(i)] for any $i,j\in\{1,\dots, d\}$, we have $\E\left[{\tt Z}_{n}^i{\tt Z}_{n}^j\right]\rightarrow C(i,j)$, as $n\rightarrow\infty$;
\item[(ii)] for any $i\in\{1,\dots, d\}$, we have $\sup_{n\geq 1}\E\left[\Norm{D{\tt Z}_{n}^i}_{\Hg}^{4}\right]<\infty$ and 
\item[(iii)] for any $i\in\{1,\dots, d\}$, we have $\E\left[\Norm{D^2{\tt Z}_{n}^i\otimes_{1}D^2{\tt Z}_{n}^i}_{(\Hg^d)^{\otimes 2}}^2\right]\rightarrow 0$, as $n\rightarrow\infty$,
\end{enumerate}
then ${\tt Z}_{n}\xrightarrow[]{(law)}\mathcal{N}_{d}(0,C)$, as $n\rightarrow\infty$, and moreover
\begin{align*}
d_{TV}(\mathcal{L}({\tt Z}_{n}^i),\mathcal{L}(N_{i}))
  &\leq C_1 \E\left[\Norm{D^2{\tt Z}_{n}^i\otimes_{1}D^2{\tt Z}_{n}^i}_{(\Hg^d)^{\otimes 2}}^2\right]^{\frac{1}{4}},
\end{align*}
where $C_1>0$ is a constant independent of $n$ and $\mathcal{L}({\tt Z}_{n}^i)$ and $\mathcal{L}(N_i)$ denote the laws of ${\tt Z}_{n}^i$ and $N_i$, respectively. 
\end{Theorem}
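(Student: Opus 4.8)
The plan is to establish Theorem~\ref{thm:CLTWienerchaos} by combining the Malliavin integration-by-parts formula with two normal-approximation engines: a smart-path interpolation argument for the qualitative multivariate convergence (which tolerates a possibly degenerate covariance $C$), and the one-dimensional Stein's method for the quantitative total variation bound. Both reductions funnel the problem into controlling, for each pair $(i,j)$, the random variable $\langle D{\tt Z}_n^j,-D\mathcal{L}^{-1}{\tt Z}_n^i\rangle_{\Hg}$ and its deviation from the target entry $C(i,j)$.

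For the convergence in law, fix a test function $g\in\Cc^2(\R^d)$ with bounded second derivatives and let $N$ be independent of $\{{\tt Z}_n^i\}$. Setting $\Psi(v):=\E[g(\sqrt{v}\,{\tt Z}_n+\sqrt{1-v}\,N)]$, I would write $\E[g({\tt Z}_n)]-\E[g(N)]=\int_0^1\Psi'(v)\,\ud v$ and integrate by parts in each summand of $\Psi'(v)$: the contributions of $N$ are handled by the Gaussian integration-by-parts identity (producing $\sum_j C(i,j)\partial^2_{ij}g$), while those of ${\tt Z}_n^i$ are handled by Malliavin calculus. Concretely, since each ${\tt Z}_n^i$ is centered and lies in $\D^{2,4}\subset\D^{1,2}$, identity \eqref{eq:deltaDFI} gives ${\tt Z}_n^i=\delta(-D\mathcal{L}^{-1}{\tt Z}_n^i)$, and the duality relationship \eqref{eq:dualitydeltaD} together with the chain rule $D\partial_i g(\cdot)=\sum_j\partial^2_{ij}g(\cdot)\,D{\tt Z}_n^j$ yields $\E[\partial_i g(\cdot)\,{\tt Z}_n^i]=\sum_j\E[\partial^2_{ij}g(\cdot)\,\langle D{\tt Z}_n^j,-D\mathcal{L}^{-1}{\tt Z}_n^i\rangle_{\Hg}]$. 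After both integrations by parts the first-order terms cancel up to the discrepancy between $C(i,j)$ and $\langle D{\tt Z}_n^j,-D\mathcal{L}^{-1}{\tt Z}_n^i\rangle_\Hg$, leading to $|\E[g({\tt Z}_n)]-\E[g(N)]|\leq \tfrac12\sum_{i,j}\Norm{\partial^2_{ij}g}_{\infty}\,\E[\Abs{C(i,j)-\langle D{\tt Z}_n^j,-D\mathcal{L}^{-1}{\tt Z}_n^i\rangle_\Hg}]$.

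It then remains to show that each term $\E[\Abs{C(i,j)-\langle D{\tt Z}_n^j,-D\mathcal{L}^{-1}{\tt Z}_n^i\rangle_\Hg}]$ tends to zero. I would split it using $\E[\langle D{\tt Z}_n^j,-D\mathcal{L}^{-1}{\tt Z}_n^i\rangle_\Hg]=\E[{\tt Z}_n^i{\tt Z}_n^j]$: the deterministic bias $\Abs{C(i,j)-\E[{\tt Z}_n^i{\tt Z}_n^j]}$ vanishes by hypothesis (i), while the fluctuation is bounded through Cauchy--Schwarz by $\mathrm{Var}[\langle D{\tt Z}_n^j,-D\mathcal{L}^{-1}{\tt Z}_n^i\rangle_\Hg]^{1/2}$. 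The crucial technical lemma is the contraction estimate $\mathrm{Var}[\langle D{\tt Z}_n^j,-D\mathcal{L}^{-1}{\tt Z}_n^i\rangle_\Hg]\leq c\,\max_k\E[\Norm{D{\tt Z}_n^k}_{\Hg}^4]^{1/2}\,\max_k\E[\Norm{D^2{\tt Z}_n^k\otimes_1 D^2{\tt Z}_n^k}_{(\Hg^d)^{\otimes 2}}^2]^{1/2}$, which I would prove by applying the Poincar\'e inequality \eqref{eq:poincare} to the variable $\langle D{\tt Z}_n^j,-D\mathcal{L}^{-1}{\tt Z}_n^i\rangle_\Hg$, computing its Malliavin derivative, and bounding the resulting partial contractions by Cauchy--Schwarz; here one uses that $D^2(-\mathcal{L}^{-1}{\tt Z}_n^k)$ and $D^2{\tt Z}_n^k$ have comparable $(\Hg^d)^{\otimes2}$-norms because $-\mathcal{L}^{-1}$ merely rescales the $q$-th chaos by $1/q\le1$. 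Conditions (ii) and (iii) then force the right-hand side to vanish, giving ${\tt Z}_n\xrightarrow{(law)}\mathcal{N}_d(0,C)$.

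For the total variation bound I would specialize to a single component and invoke one-dimensional Stein's method: the Stein equation associated with a centered Gaussian law admits a solution $f_A$ with $\Norm{f_A'}_\infty\le C$ uniformly over $A\in\Bc(\R)$, so the same integration-by-parts computation gives $d_{TV}(\mathcal{L}({\tt Z}_n^i),\mathcal{L}(N_i))\leq C\,\mathrm{Var}[\langle D{\tt Z}_n^i,-D\mathcal{L}^{-1}{\tt Z}_n^i\rangle_\Hg]^{1/2}$ (comparing against a centered Gaussian with the same variance as ${\tt Z}_n^i$); inserting the contraction estimate and using the uniform bound from (ii) converts the square root into the stated rate $\E[\Norm{D^2{\tt Z}_n^i\otimes_1 D^2{\tt Z}_n^i}_{(\Hg^d)^{\otimes2}}^2]^{1/4}$. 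The main obstacle is precisely this contraction estimate: reducing the abstract variance $\mathrm{Var}[\langle D{\tt Z}_n^j,-D\mathcal{L}^{-1}{\tt Z}_n^i\rangle_\Hg]$ to a single controllable quantity requires carefully expanding the Malliavin derivative of an inner product, exploiting the commutation of $D$ with $-\mathcal{L}^{-1}$ up to chaos-dependent rescalings, and applying Cauchy--Schwarz so that the mixed $(i,j)$ contraction is dominated by the diagonal ones — which is exactly where hypothesis (ii) becomes indispensable to close the argument.
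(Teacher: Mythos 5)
Your proposal is correct and is in substance the same proof the paper relies on: the paper does not prove Theorem~\ref{thm:CLTWienerchaos} itself but cites Theorems~5.3.3 and~6.1.3 of Nourdin--Peccati, and your smart-path interpolation for the multivariate convergence together with one-dimensional Stein's method and the second-order Poincar\'e-type estimate $\mathrm{Var}\big[\Ip{D{\tt Z}_n^j,-D\mathcal{L}^{-1}{\tt Z}_n^i}_{\Hg^d}\big]\leq c\,\E\big[\Norm{D{\tt Z}_n^j}_{\Hg^d}^4\big]^{1/2}\E\big[\Norm{D^2{\tt Z}_n^i\otimes_1 D^2{\tt Z}_n^i}_{(\Hg^d)^{\otimes 2}}^2\big]^{1/2}$ (obtained via the Poincar\'e inequality \eqref{eq:poincare} and the chaos-contraction property of $-\mathcal{L}^{-1}$, in practice through the Mehler representation \eqref{eq:Mehler2}) is exactly the argument behind those cited results. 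The one caveat you correctly flag — that Stein's method compares ${\tt Z}_n^i$ to the Gaussian with its own variance $\E[({\tt Z}_n^i)^2]$ rather than $C(i,i)$, so matching the literal statement against $\mathcal{L}(N_i)$ needs the bias $|C(i,i)-\E[({\tt Z}_n^i)^2]|$ to be absorbed — is an imprecision already present in the paper's compressed statement, not a defect of your argument.
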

Theorem \ref{thm:CLTWienerchaos} is closely related to the celebrated {\it Fourth Moment Theorem}, originally established  by Nualart and Peccati in \cite{nualart2005central} where convergence in distribution of multiple Wiener integrals to the standard Gaussian law is stated, in the sense that it is equivalent to the convergence of just the fourth moment. For further details about  improvements and developments  on this subject we refer to the monograph of Nourdin and Pecatti \cite{nourdin2012normal}.
\subsection{Free independence and multiple Wigner integrals}\label{sec:freemultint}
The proof of Theorem \ref{theorem:covariance}, uses the relation between classical independence of large symmetric random matrices and free independence of non-commutative random variables, which was first explored by Voiculescu in \cite{voiculescu1991limit}. In this section we introduce some basic tools from free probability regarding analysis in the Wigner space, which are very useful for our purposes. We  closely follow Biane and Speicher \cite{biane1998stochastic} and  Kemp et al.~\cite{kemp2012wigner}.

A $C^*$-probability space is a pair $(\mathcal{A},\tau)$ where $\mathcal{A}$ is a unital $C^*$-algebra  and $\tau:\mathcal{A}\to\mathbb{C}$ is a positive unital linear functional. In the sequel, the involution associated to $\mathcal{A}$ will be denoted by $*$. Two classical examples to keep in mind are the following, 
 \begin{enumerate}
 \item[(i)] the algebra $\mathcal{A}$ of bounded $\mathbb{C}$-valued random variables defined in a given probability space, where $\tau=\E[\cdot]$ is the expectation and $*$ denotes the complex conjugation and 
\item[(ii)] the algebra $\mathcal{A}$ of random matrices of dimension $n$, where $\tau$ is the expected normalized trace $\frac{1}{n}\E[{\rm Tr}(\cdot)]$ and $*$ denotes the conjugate transpose operation. 
  \end{enumerate}
  
 The elements of $\mathcal{A}$ are called non-commutative random variables. In the sequel we  use the symbol $*$ to denote, both, the involution of a $C^*$-probability space (when applied to a non-commutative random variable) and the conjugate transpose operation (when applied to a matrix). This abuse of notation is justified by the fact that, as mentioned in example (ii), the set of random matrices of dimension $n$ can be realized as a $C^*$-probability space.
 
 An element $a\in\mathcal{A}$ such that $a=a^*$ is called self-adjoint. A $W^{*}$-probability space is a $C^*$-probability space $(\mathcal{A},\tau)$ such that $\mathcal{A}$ is a Von Neumann algebra (i.e., an algebra of operators on a separable Hilbert space, closed under adjoint and weak convergence) and $\tau$ is weakly continuous, faithful (i.e., that if $\tau[YY^*]=0$, then $Y=0$) and tracial (i.e., that $\tau[XY]=\tau[YX]$ for all $X,Y\in\mathcal{A}$). The functional $\tau$ should be understood as the analogue of the expectation in classical probability. For $a_1,\dots,a_k\in \mathcal{A}$, we  refer to the values of $\tau[a_{i_1}\cdots a_{i_n}]$, for $1\leq i_1,...,i_{n}\leq k$ and  $n\geq1$, as the \textit{mixed moments} of $a_1,\dots,a_k$.

 For any self-adjoint element $a\in\mathcal{A}$, there exists a unique probability measure $\mu_a$ supported over a compact subset of the reals numbers such that
 $$\int_{\mathbb{R}}x^{k}\mu_a ({\rm d}x)=\tau [a^{k}], \quad \textrm{for }\quad k\in \mathbb{N}.$$
 The measure $\mu_a$ is often called the (analytical) distribution of $a$.

 Even if we know the individual distribution of two self-adjoint elements $a,b\in \mathcal{A}$, their joint distribution (mixed moments) can be quite arbitrary, unless some notion of independence is assumed to hold between $a$ and $b$.  Here, we  deal  with free independence.

\begin{definition} Let $\{A_i, i\in\iota\}$ be a family of subalgebras of $\mathcal{A}$ and, for $a\in \mathcal{A}$, let $\mathring{a}:=a-\tau[a]$. 
We say that $\{A_i, i\in\iota\}$ are \textit{freely independent} or \text{free} if
\begin{equation}
\tau[\mathring{a}_1\mathring{a}_2 \cdots \mathring{a}_k]=0,
\end{equation}
whenever $k\geq 1$, $a_1,\dots a_k\in \mathcal{A}$ with $a_j\in A_{i(j)}$ for $1\leq j\leq k$, and $i(1)\neq i(2) \neq \cdots \neq i(k)$.
\end{definition}
We now introduce the notion of a {\it free Brownian motion}. Let $S=(S_{t}\ ,\ t\geq 0)$ be  a one-parameter family of self-adjoint operators $S_{t}$, defined in a $W^{*}$ probability space $(\mathcal{A},\tau)$ satisfying 
\begin{enumerate}
\item[i)] $S_{0}=0$, 
\item[ii)] for all $0<t_{1}<t_{2}$, the increment  $S_{t_{2}}-S_{t_{1}}$ possesses the same law as the semicircular law with mean zero and variance $t_{2}-t_{1}$,
\item[iii)] and for all $k$ and $t_{1}\leq t_{2}\leq \cdots\leq t_{k-1}\leq t_{k}$, the increments $S_{t_{1}},S_{t_{2}}-S_{t_{1}},\dots, S_{t_{k+1}}-S_{t_{k}}$ are freely independent.
\end{enumerate}
The family of self-adjoint operators $S$ is known as free Brownian motion.

Let $f\in {\rm L}^{2}(\R_{+}^{q})$ be an off-diagonal indicator function of the form 
$$f(x_{1},\dots, x_{q})
  =\Indi{[s_{1},t_{1}]}(x_1)\cdots \Indi{[s_{q},t_{q}]}(x_q),$$
where the intervals $[s_{1},t_{1}], \dots, [s_{q},t_{q}]$ are pairwise disjoint. The Wigner integral $I_{q}^{S}(f)$ is defined as 
\begin{align*}
I_{q}^{S}(f)
  &:=(S_{t_{1}}-S_{s_{1}})\cdots(S_{t_{q}}-S_{s_{q}}),
\end{align*}
and then extended linearly over the set of all off-diagonal step-functions, which is dense in ${\rm L}^{2}(\R_{+}^q)$. The Wigner integral satisfies the following relation
\begin{align}\label{eq:isometrywigner}
\tau\big[I_{q}^{S}(f)^*I_{q}^{S}(g)\big]
  &=\Ip{f,g}_{{\rm L}^{2}(\R_{+}^q)}.
\end{align}
Namely, $I^S_q$ is an isometry from the space of off-diagonal step functions into the Hilbert space of operators generated by $S$, equipped with the inner product $\Ip{X,Y}=\tau[Y^*X]$. 

As a consequence, $I^S_{q}$ can be extended to the domain ${\rm L}^{2}(\R_{+}^{q})$. The Wigner integral has the property  that the image if $I^S_{m}$ is orthogonal to $I^S_{n}$ for $n\neq m$. In the sequel, we  use the notation $S(h):=I_1^{S}(h)$, for every $h\in {\rm L}^{2}(\R_{+})$.

\begin{Definition}
Let $m,n\in \mathbb{N}$, $f\in {\rm L}^{2}(\R_{+}^{n})$ and $g\in {\rm L}^{2}(\R_{+}^{m})$. For $p\leq m\wedge n$, we define the $p$-th contraction $f\stackrel{p}{\frown}g$ of $f$ and $g$ as the ${\rm L}^{2}(\R_{+}^{n+m-2p})$ function defined by
\begin{align*}
f\stackrel{p}{\frown}g(t_{1},\dots, t_{n+m-2p})
  &=\int_{\R_{+}^{p}}f(t_{1},\dots, t_{n-p},s_{1},\dots, s_{p})\\
	&\hspace{3cm}\times g(s_{p},\dots, s_{1},t_{n-p+1},\dots, t_{n+m-2p})\ud s_1\cdots \ud s_{p}.
\end{align*}
\end{Definition}
The following result was proved in \cite{biane1998stochastic},
\begin{Proposition}
Let $n,m\in \mathbb{N}$,   $f\in {\rm L}^{2}(\R_{+}^{n})$ and $g\in {\rm L}^{2}(\R_{+}^{m})$. Then, 
\begin{align*}
I_n^{S}(f)I_m^{S}(g)
  &=\sum_{p=0}^{n\wedge m}I^S_{n+m-2p}(f\stackrel{p}{\frown}g).
\end{align*}
\end{Proposition}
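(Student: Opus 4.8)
The plan is to establish the identity first for off-diagonal indicator functions, where both $I_n^S$ and the contractions reduce to explicit products and integrals of free increments, and then to transfer it to arbitrary ${\rm L}^2$ kernels by continuity. Since $(f,g)\mapsto I_n^S(f)I_m^S(g)$ and each $(f,g)\mapsto f\stackrel{p}{\frown}g$ are bilinear and continuous (using a bound of the form $\Norm{I_q^S(h)}\leq C_q\Norm{h}_{{\rm L}^2(\R_+^q)}$ together with the isometry \eqref{eq:isometrywigner}, which make both sides of the claimed identity depend continuously on $(f,g)$ as elements of ${\rm L}^2(\mathcal{A},\tau)$), and since off-diagonal step functions are dense in ${\rm L}^2(\R_+^q)$, it suffices to prove the formula when $f$ and $g$ are tensor products of indicators of intervals. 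Refining all the intervals to a common finite partition of $\R_+$ and using multilinearity once more, I may assume that every interval involved is a single block of this partition, so that any two of them are either identical or disjoint and that disjoint blocks yield free semicircular increments.

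The two elementary inputs are, first, the Wick square relation for a single block $e=\Indi{[a,b]}$, namely $S(e)^2=I_2^S(e\otimes e)+\Ip{e,e}_{{\rm L}^2(\R_+)}$, which expresses that $S(e)$ is semicircular with $U_2$ playing the role of the Wick-ordered square; and second, the disjoint-support concatenation identity: if every interval underlying $f$ is disjoint from every interval underlying $g$, then $f\stackrel{p}{\frown}g=0$ for all $p\geq1$ and $I_n^S(f)I_m^S(g)=I_{n+m}^S(f\otimes g)$, which follows by density from the genuinely off-diagonal case. Combining these two facts yields the single-increment product rule $S(e)\,I_m^S(g)=I_{m+1}^S(e\otimes g)+I_{m-1}^S(e\stackrel{1}{\frown}g)$ for a block $e$, where $e\stackrel{1}{\frown}g$ contracts $e$ against the first variable of $g$: prepending the increment $S_b-S_a$ to the word defining $I_m^S(g)$ either extends it (when $e$ is disjoint from the first block of $g$) or, when $e$ coincides with that block, produces a square which the Wick relation splits into the extended Wigner integral plus the scalar $\Ip{e,e}$ collapsing the adjacent pair, exactly the one-fold contraction.

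With these in hand I would argue by induction on $n$. The base case $n=0$ is scalar multiplication, with $f\stackrel{0}{\frown}g=f\otimes g$. For the inductive step, peel off the last block $e$ of $f$, writing $f=\bar f\otimes e$ so that $I_n^S(f)=I_{n-1}^S(\bar f)\,S(e)$ is a genuine product, and move $S(e)$ next to $g$ via the single-increment rule to obtain $I_n^S(f)I_m^S(g)=I_{n-1}^S(\bar f)I_{m+1}^S(e\otimes g)+I_{n-1}^S(\bar f)I_{m-1}^S(e\stackrel{1}{\frown}g)$. Applying the inductive hypothesis to each of the two products and regrouping the resulting Wigner integrals by their order would then produce the claimed sum $\sum_{p=0}^{n\wedge m}I_{n+m-2p}^S(f\stackrel{p}{\frown}g)$.

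The main obstacle is the combinatorial bookkeeping in this last step: verifying that the two families of terms produced by the single-increment rule reassemble precisely into the successive contractions, with the correct reversed ordering $s_p,\dots,s_1$ of the contracted variables appearing in the definition of $f\stackrel{p}{\frown}g$. This reversal is not cosmetic — it encodes the fact that, because disjoint increments are free and only coinciding blocks contract, the pairings that survive are nested (non-crossing) rather than arbitrary: a crossing contraction would pair a variable of $f$ with an interior variable of $g$ across an already-contracted pair, and the freeness of the intervening increments forces such a term to vanish. Confirming this nesting, and that the length factors $\Ip{e,e}$ land in the right coordinate slots under the reversal, is where the real care is needed; everything else is the routine bilinear extension from the first step.
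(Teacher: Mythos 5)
First, on the comparison: the paper does not prove this proposition at all --- it is quoted verbatim from Biane and Speicher \cite{biane1998stochastic}, where it is obtained within free stochastic analysis. Your combinatorial induction is therefore a genuinely different route, and in outline it is the standard one. Notably, the step you flag as the ``main obstacle'' is in fact the part that works smoothly: writing $f=\bar f\otimes e$ with $f$ off-diagonal, one checks directly from the definition that $(\bar f\otimes e)\stackrel{p}{\frown}g=\bar f\stackrel{p-1}{\frown}(e\stackrel{1}{\frown}g)$ for $p\geq1$ (the reversed ordering $s_p,\dots,s_1$ is precisely what places $e$ against the first variable of $g$), while every contraction $\bar f\stackrel{p}{\frown}(e\otimes g)$ with $p\geq1$ vanishes for the elementary reason that the last block of $\bar f$ is disjoint from $e$: the integrand contains the factor $\Indi{A_{n-1}}(s_p)\Indi{A_n}(s_p)\equiv0$. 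No freeness-kills-crossings argument is needed there; once the inductive hypothesis is stated for arbitrary ${\rm L}^2$ kernels at order $n-1$ (legitimate, by your continuity reduction), the two families of terms regroup exactly into $\sum_{p=0}^{n\wedge m}I^S_{n+m-2p}(f\stackrel{p}{\frown}g)$.

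The genuine gap is instead in your base case, the single-increment rule. Your case analysis compares $e$ only with the \emph{first} block of $g=\Indi{B_1}\otimes\cdots\otimes\Indi{B_m}$. If $e$ is disjoint from $B_1$ but coincides with a later block $B_j$, $j\geq2$, then $e\stackrel{1}{\frown}g=0$ and the rule asserts $S(e)I_m^S(g)=I_{m+1}^S(e\otimes g)$; but $e\otimes g$ is \emph{not} off-diagonal (it has a repeated, non-adjacent block), so this identity is not definitional, and your disjoint-support concatenation lemma does not apply. The same issue arises when $e=B_1$: after the Wick split you must still identify $I_2^S(e\otimes e)\,S(B_2)\cdots S(B_m)$ with $I_{m+1}^S(e\otimes e\otimes B_2\otimes\cdots\otimes B_m)$, again a non-off-diagonal kernel --- and indeed even $S(e)^2=I_2^S(e\otimes e)+\Norm{e}_{{\rm L}^2}^2$ needs an argument, since $I_2^S$ is defined on $e\otimes e$ only by ${\rm L}^2$ extension. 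The standard repair is the free analogue of the classical It\^o-integral computation: refine $e$ into $k$ sub-blocks $e_1,\dots,e_k$, expand multilinearly, note that the off-diagonal terms are covered by the definition, and kill the diagonal terms in the limit using $\sum_{i}\Norm{e_i}_{{\rm L}^2}^4\rightarrow0$ together with the isometry \eqref{eq:isometrywigner} and the operator-norm bound $\Norm{I_q^S(h)}\leq(q+1)\Norm{h}_{{\rm L}^2}$ (a Haagerup-type inequality which you invoke but should also cite or prove). Alternatively, one can verify the $n=1$ identity by pairing both sides against arbitrary off-diagonal integrals and computing mixed moments of the semicircular family via the free Wick formula. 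Without one of these arguments the induction never gets started, so as written the proposal is a plausible but incomplete sketch rather than a proof.
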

In the particular case when $n=1$, $m\geq 2$, $\Norm{f}_{{\rm L}^{2}(\R)}=1$ and $g=f^{\otimes m}$, we get 
\begin{align*}
S(f)I_m^{S}(f^{\otimes m})
  &=I_{1}^S(f)I_m^{S}(f^{\otimes m})=I^S_{m+1}(f^{\otimes(m+1)})+I^S_{m-1}(f^{\otimes(m-1)}),
\end{align*}
under the convention that $I_0^S$ is the identity function defined over $\R$. As a consequence, we have the recursion 
\begin{align*}
I^S_{m+1}(f^{\otimes(m+1)})
  &=S(f)I_m^{S}(f^{\otimes m})-I^S_{m-1}(f^{\otimes(m-1)}),
\end{align*}
with initial condition $I^S_{0}(f^{\otimes 0}) = 1$ and $I^S_{1}(f)=S(f)$. Since Chebyshev polynomials of the second kind are defined by the previous recursion, we conclude that 
\begin{align}\label{eq:chebandIto}
I^S_{q}(f^{\otimes q})
  &=U_{q}(S(f)),
\end{align}
where $U_{q}$ denotes the $q$-th Chebyshev polynomial of second order in $[-2,2]$, given by \eqref{eq:Chebyshevdef}. Hence, using the orthogonality of $I_m^S$ and $I_n^S$, as well as \eqref{eq:isometrywigner}, we obtain the property
\begin{equation}
\label{eq:IdentityPol}
\tau\left[U_{m}(S(f))U_{n}(S(g))\right]
  =\delta_{m,n}\Ip{f,g}_{{\rm L}^{2}(\R_{+})}^{m},
\end{equation}
 where  $\delta_{m,n}$ denotes the Kronecker delta. The previous equality shows that if $a$ and $b$ are jointly semicircular with mean zero and unit variance, then
\begin{align}\label{eq:isometrysemicircular}
\tau\left[U_{m}(a)U_{n}(b)\right]
  &=\delta_{m,n}(\tau\left[a^*b\right])^{m}.
\end{align}
Indeed, this is achieved by taking $f=\Indi{[0,1]}$ and $g=\Indi{[1-\tau[a^*b],2-\tau[a^*b]]}$ in \eqref{eq:IdentityPol}, so that $(I_1(f),I_2(g))$ and $(a,b)$ are equal in distribution.

\subsection{Eigenvalues of symmetric matrices}\label{sec:eigenvalues}

Define $d(n):=n(n+1)/2$. In the sequel, we  identify the elements $x=(x_{k,h} \vert 1\leq k\leq h\leq n)\in\R^{d(n)}$, with the $n$-dimensional, square symmetric matrix given by
\begin{align*}
\widehat{x}:=\left(\begin{array}{ccccccc}
\sqrt{2}x_{1,1} & x_{1,2} & x_{1,3} & \cdots & x_{1,n}\\
x_{1,2} & \sqrt{2}x_{2,2} & x_{2,3} & \cdots & x_{2,n}\\
x_{1,3} & x_{2,3} & \sqrt{2}x_{3,3} & \cdots & x_{3,n}\\
\vdots & \vdots & \vdots &\ddots & \vdots\\
x_{1,n} & x_{2,n} & x_{3,n} & \cdots & \sqrt{2}x_{n,n}
\end{array}\right).
\end{align*}
For every $x\in\R^{d(n)}$, we  denote by $\Phi_i(x)$ for the $i$-th largest eigenvalue of $\widehat{x}$. By Lemma 2.5 in the monograph of Anderson et al.~\cite{anderson2010introduction}, there exists an open subset $G\subset\R^{d(n)}$, with $|G^{c}|=0$, such that for every $x\in G$, the matrix $\widehat{x}$ has a factorization of the form $\widehat{x}=U DU^{*}$, where $D$ is a diagonal matrix with entries $D_{i,i}=\Phi_{i}(x)$ such that $\Phi_{1}(x)>\dots>\Phi_{n}(x)$, $U$ is an orthogonal matrix with $U_{i,i}>0$ for all $i$, $U_{i,j}\neq0$ and all the minors of $U$ have non zero determinants. Furthermore, if $\mathcal{O}(n)$ denotes the orthogonal group of dimension $n$ and $D_n$ the set of diagonal matrices of dimension $n$, there exist differentiable mappings 
$T_1:G\rightarrow\mathcal{O}(n)$ and $T_{2}:G\rightarrow D_{n}$, such that $\widehat{x}=T_{1}(x)T_2(x)T_{1}(x)^*$ for all $\in G$. For $x\in G$, we  denote by $U(x)$ for the orthogonal matrix $U(x)=T_{1}(x)$. 
The aforementioned set $G$ is known in the literature as the collection of good matrices, and is widely used in the study of the differential properties of Gaussian ensembles. To exemplify the importance of good matrices, we refer the reader to \cite[Section  4.3.2]{anderson2010introduction}, in which this set is utilized for setting the mathematical foundations for establishing a dynamical version of Wigner's theorem (which can be understood as a simplified first order version of the results presented here).

Let us  denote by $\frac{\partial \Phi_{i}}{\partial x_{k,h}}(x)$ the partial derivatives of $\Phi_i$ with respect to the $(k,h)$-component of $\widehat{x}$. In Lemma \ref{lem:app} in the appendix, it is shown that
\begin{align}
\frac{\partial \Phi_{i}}{\partial x_{k,h}}(x)
  =V_{k,h}^{i,i}(x),
	\label{eq:D1PhiUpc}
\end{align}
where
\begin{align}\label{eq:Vdef}
V_{k,h}^{i,j}(x)
  &:=\big(U_{k,i}U_{h,j}+U_{h,i}U_{k,j}\big)(x)\Indi{\{k\neq h\}}
	+\sqrt{2}U_{k,i}(x)U_{k,j}(x)\Indi{\{k=h\}}.
\end{align}
Next we prove some useful properties of the terms $V_{k,h}^{i,j}(x)$. It is not difficult to deduce  that for every $1\leq i,j\leq n$ and $1\leq k\leq h\leq n$, we have that $V_{k,h}^{i,j}(x)=V_{h,k}^{i,j}(x)$, and in consequence, we have
\[
\begin{split}
\sum_{k\leq h}V_{k,h}^{i_1,j_1}(x)V_{k,h}^{i_2,j_2}(x)
  &=\frac{1}{2}\sum_{k<h}V_{k,h}^{i_1,j_1}(x)V_{k,h}^{i_2,j_2}(x)+
  \frac{1}{2}\sum_{k<h}V_{h,k}^{i_1,j_1}(x)V_{h,k}^{i_2,j_2}(x)\\
  &\hspace{7cm}+
  \sum_{k=1}^n V_{k,k}^{i_1,j_1}(x)V_{k,k}^{i_2,j_2}(x)\\
	&=\frac{1}{2}\sum_{p\neq q}V_{p,q}^{i_1,j_1}(x)V_{p,q}^{i_2,j_2}(x)+\sum_{p=1}^n V_{p,p}^{i_1,j_1}(x)V_{p,p}^{i_2,j_2}(x).
\end{split}
\]
From here we obtain
\[
\begin{split}
\sum_{k\leq h}V_{k,h}^{i_1,j_1}(x)V_{k,h}^{i_2,j_2}(x)
  &=\frac{1}{2}\sum_{p\neq q}\big(U_{p,i_1}U_{q,j_1}+U_{q,i_1}U_{p,j_1}\big)(x)\big(U_{p,i_2}U_{q,j_2}+U_{q,i_2}U_{p,j_2}\big)(x)\\
  &\hspace{6.5cm}+2\sum_{p=1}^n (U_{p,i_1}U_{p,j_1}U_{p,i_2}U_{p,j_2})(x)\\
	&=\frac{1}{2}\sum_{1\leq p,q\leq n}\big(U_{p,i_1}U_{q,j_1}+U_{q,i_1}U_{p,j_1}\big)(x)\big(U_{p,i_2}U_{q,j_2}+U_{q,i_2}U_{p,j_2}\big)(x).
\end{split}
\]
Consequently, by the orthogonality of the columns of $U(x)$,  we have
\begin{align}\label{eq:VandW}
\sum_{k\leq h}V_{k,h}^{i_1,j_1}(x)V_{k,h}^{i_2,j_2}(x)
  &=\delta_{i_1,i_2}\delta_{j_1,j_2}+\delta_{i_1,j_2}\delta_{j_1,i_2}.
\end{align}
where we recall that $\delta_{i,j}$ denotes the Kronecker delta.
Using identities  \eqref{eq:D1PhiUpc} and \eqref{eq:VandW}, we get that for every $1\leq i_{1},i_{2}\leq n$, 
\begin{align}
\sum_{k\leq h}\frac{\partial \Phi_{i_1}}{\partial x_{k,h}}(x)\frac{\partial \Phi_{i_2}}{\partial x_{k,h}}(x)
  &=2\Indi{\{i_1=i_{2}\}},\label{eq:D1PhiUpsum}
\end{align}
which in turn implies that for every function $f:\R\rightarrow\R$, and $x\in G$, the functionals
\begin{align}
\Psi_{k,h}[f](x)
  &:=\sum_{i=1}^{n}f(\Phi_{i}(x))\frac{\partial \Phi_{i}}{\partial x_{k,h}}(x),\label{eq:psik1def}\\
\Psi_{k,h}^{p,q}[f](x)
  &:=\sum_{i=1}^{n}f(\Phi_{i}(x))\frac{\partial \Phi_{i}}{\partial x_{k,h}}(x)\frac{\partial \Phi_{i}}{\partial x_{p,q}}(x)\label{eq:psik2def},
\end{align}
satisfy
\begin{align}
\sum_{k\leq h}\big|\Psi_{k,h}[f](x)\big|^2
	& =2\sum_{i=1}^{n}f(\Phi_{i}(x))^2,\label{eq:isimetrypsik1}\\
\sum_{k\leq h}\sum_{p\leq q}\big|\Psi_{k,h}^{p,q}[f](x)\big|^2
	& =4\sum_{i=1}^{n}f(\Phi_{i}(x))^2\label{eq:isimetrypsik2}.
\end{align}
On the other hand, from Lemma \ref{lem:app} (see Apendix) we know
\begin{align}
\frac{\partial^{2}\Phi_{i}}{\partial x_{k,h}\partial x_{p,q}}(x)
  &=\sum_{j=1}^n\frac{2}{\Phi_{i}(x)-\Phi_{j}(x)}\Indi{\{j\neq i\}}V_{k,h}^{i,j}(x)V_{p,q}^{i,j}(x).\label{eq:D2partialPhiUp}
\end{align}
Thus, we get that for every $k\leq h$, $p\leq q$, 
\begin{equation}\label{eq:Fpartialv01}
\begin{split}
\sum_{i=1}^{n}f(\Phi_{i}(x))\frac{\partial^{2} \Phi_{i}}{\partial x_{k,h}\partial x_{p,q}}(x)
	&=2\sum_{i\neq j}\frac{f(\Phi_{i}(x))}{\Phi_{i}(x)-\Phi_{j}(x)}V_{k,h}^{i,j}(x)V_{p,q}^{i,j}(x)\\
	&\hspace{-3cm}=\sum_{i\neq j}\frac{f(\Phi_{i}(x))}{\Phi_{i}(x)-\Phi_{j}(x)}V_{k,h}^{i,j}(x)V_{p,q}^{i,j}(x)+\sum_{i\neq j}\frac{f( \Phi_{j}(x))}{\Phi_{j}(x)-\Phi_{i}(x)}V_{k,h}^{j,i}(x)V_{p,q}^{j,i}(x).
\end{split}
\end{equation}
From \eqref{eq:Vdef}, we can easily check that $V_{k,h}^{i,j}(x)=V_{k,h}^{j,i}(x)$ for all $1\leq i,j\leq n$ and $1\leq k\leq h\leq n$, which implies that identity \eqref{eq:Fpartialv01} can be rewritten as follows
\begin{align}\label{eq:Pikhpqdefprev}
\sum_{i=1}^{n}f(\Phi_{i}(x))\frac{\partial^{2}\Phi_{i}}{\partial x_{k,h}\partial x_{p,q}}(x)
  &=\sum_{j\neq i}\frac{f (\Phi_{i}(x))-f(\Phi_j(x))}{\Phi_{i}(x)-\Phi_{j}(x)}V_{k,h}^{i,j}(x)V_{p,q}^{i,j}(x).
\end{align}
Thus, by \eqref{eq:VandW}, the functional
\begin{align}\label{eq:Pikhpqdef}
\Pi_{k,h}^{p,q}[f](x)
  &:=\sum_{i=1}^{n}f(\Phi_{i}(x))\frac{\partial\Phi_{i}}{\partial x_{k,h}\partial x_{p,q}}(x),
\end{align}
satisfies
\begin{align*}
\sum_{k\leq h}\sum_{p\leq q}\big|\Pi_{k,h}^{p,q}[f](x)\big|^2
  &=\sum_{j_1\neq i_1}\sum_{j_2\neq i_2}\frac{f(\Phi_{i_1}(x))-f(\Phi_{j_1}(x))}{\Phi_{i_1}(x)-\Phi_{j_1}(x)}\frac{f( \Phi_{i_2}(x))-f(\Phi_{j_2}(x))}{\Phi_{i_2}(x)-\Phi_{j_2}(x)}\\
	&\hspace{6cm}\times \bigg(\delta_{i_1,i_2}\delta_{j_1,j_2}+\delta_{i_1,j_2}\delta_{j_1,i_2}\bigg)^2,
\end{align*}
which simplifies to 
\begin{align}
\sum_{k\leq h}\sum_{p\leq q}\big|\Pi_{k,h}^{p,q}[f](x)\big|^2
  &=2\sum_{i\neq j}\bigg(\frac{f( \Phi_{i}(x))-f(\Phi_{j}(x))}{\Phi_{i}(x)-\Phi_{j}(x)}\bigg)^2\label{eq:fphisum24}.
\end{align}

We end this section by proving the following result, which will be repeatedly used throughout the paper and holds for any  standard Gaussian  orthogonal ensamble.
\begin{Lemma}\label{lem:Wignernorm}
Let $A(n)$ be a standard Gaussian  orthogonal ensamble of dimension $n$. Then, for every $\gamma,\nu>1$, $M>0$  satisfying $\nu\leq \gamma$, and 
every continuously differentiable function $f:\R\rightarrow\R$ such that $f$ and $f^{\prime}$ have polynomial growth, there exists a constant $C>0$, such that 
\begin{align}\label{ineq:lemmawignerbound}
\sup_{n\geq 1}\sup_{z\in[0,M]}\frac{1}{n}\sum_{i=1}^{n}\Norm{f(\Phi_{i}(zA(n)))^{2}}_{{\rm L}^{\gamma}(\Omega)}^{\nu}
  &\leq C.
\end{align}
and
\begin{align}\label{ineq:lemmawignerbound2}
\sup_{n\geq 1}\sup_{z\in[0,M]}\frac{1}{n^2}\sum_{i\neq j}\bigg\|\bigg(\frac{f( \Phi_{i}(zA(n)))-f(\Phi_{j}(zA(n)))}{\Phi_{i}(zA(n))-\Phi_{j}(zA(n))}\bigg)^{2}\bigg\|_{{\rm L}^{\gamma}(\Omega)}^{\nu}
  &\leq C
\end{align}
\end{Lemma}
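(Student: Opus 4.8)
The plan is to reduce both estimates to a single elementary input: a uniform-in-$n$ bound on the moments of the empirical spectral distribution of a standard GOE matrix. Throughout I abbreviate $\Phi_i:=\Phi_i(zA(n))$, which is well defined almost surely (on the full-measure set $G$ of Section~\ref{sec:eigenvalues}), and I record the polynomial growth of $f$ and $f'$ as $|f(x)|+|f'(x)|\le C_0(1+|x|^{m})$ for suitable $C_0>0$ and $m\in\N$. Since scaling a symmetric matrix by $z\ge0$ scales its eigenvalues by $z$ and preserves their order, one has $\Phi_i(zA(n))=z\,\Phi_i(A(n))$, and hence $\tfrac1n\sum_{i=1}^n|\Phi_i(zA(n))|^{p}=\tfrac1n\mathrm{Tr}(|zA(n)|^{p})=z^{p}\,\tfrac1n\mathrm{Tr}(|A(n)|^{p})$ for every $p\ge0$. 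The key input is then that, by the classical Wigner moment computation (see \cite{anderson2010introduction}), for every real $p\ge0$ one has $\sup_{n\ge1}\E[\tfrac1n\mathrm{Tr}(|A(n)|^{p})]<\infty$; for even integers this is the convergence of $\E[\tfrac1n\mathrm{Tr}(A(n)^{2k})]$ to the Catalan number $C_k$, and for general real $p$ it follows by dominating $|x|^{p}\le 1+x^{2k}$ with $2k\ge p$. Combining these facts gives $\sup_{z\in[0,M]}\sup_{n\ge1}\E[\tfrac1n\mathrm{Tr}(|zA(n)|^{p})]\le M^{p}\sup_{n}\E[\tfrac1n\mathrm{Tr}(|A(n)|^{p})]<\infty$.

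The central device for both bounds is to exploit the hypothesis $\nu\le\gamma$ through Jensen's inequality. Writing $a_i:=\E[|f(\Phi_i)|^{2\gamma}]=\|f(\Phi_i)^2\|_{{\rm L}^{\gamma}(\Omega)}^{\gamma}$, so that $\|f(\Phi_i)^2\|_{{\rm L}^{\gamma}(\Omega)}^{\nu}=a_i^{\nu/\gamma}$ with exponent $\nu/\gamma\in(0,1]$, concavity of $t\mapsto t^{\nu/\gamma}$ yields
\[
\frac1n\sum_{i=1}^{n}\big\|f(\Phi_i)^2\big\|_{{\rm L}^{\gamma}(\Omega)}^{\nu}\le\Big(\frac1n\sum_{i=1}^{n}\E\big[|f(\Phi_i)|^{2\gamma}\big]\Big)^{\nu/\gamma}=\Big(\E\Big[\tfrac1n\sum_{i=1}^{n}|f(\Phi_i)|^{2\gamma}\Big]\Big)^{\nu/\gamma}.
\]
Polynomial growth gives the pointwise estimate $\tfrac1n\sum_i|f(\Phi_i)|^{2\gamma}\le C(1+\tfrac1n\mathrm{Tr}(|zA(n)|^{2\gamma m}))$, and taking expectations and invoking the trace-moment bound produces a constant independent of $n$ and of $z\in[0,M]$. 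This settles \eqref{ineq:lemmawignerbound}.

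For \eqref{ineq:lemmawignerbound2} I would proceed identically after first controlling the divided difference by the mean value theorem: for $i\ne j$ there is a point $\xi$ between $\Phi_i$ and $\Phi_j$ with
\[
\Big|\frac{f(\Phi_i)-f(\Phi_j)}{\Phi_i-\Phi_j}\Big|=|f'(\xi)|\le C_0\big(1+|\Phi_i|^{m}+|\Phi_j|^{m}\big),
\]
since $|\xi|\le|\Phi_i|+|\Phi_j|$. Setting $b_{ij}:=\E[((f(\Phi_i)-f(\Phi_j))/(\Phi_i-\Phi_j))^{2\gamma}]$, using $\tfrac1{n^2}\le\tfrac1{n(n-1)}$ and Jensen over the $n(n-1)$ off-diagonal terms, together with the elementary count $\sum_{i\ne j}(1+|\Phi_i|^{2\gamma m}+|\Phi_j|^{2\gamma m})\le n(n-1)+2(n-1)\mathrm{Tr}(|zA(n)|^{2\gamma m})$, one obtains
\[
\frac1{n^2}\sum_{i\ne j}b_{ij}^{\nu/\gamma}\le\Big(\frac1{n(n-1)}\sum_{i\ne j}b_{ij}\Big)^{\nu/\gamma}\le\Big(C+2C\,\E\big[\tfrac1n\mathrm{Tr}(|zA(n)|^{2\gamma m})\big]\Big)^{\nu/\gamma},
\]
whose right-hand side is again bounded uniformly by the trace-moment estimate.

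The steps requiring genuine care, as opposed to routine bookkeeping, are the uniform-in-$n$ trace moment bound, which is where all the random matrix content resides and which must be phrased for arbitrary real exponents rather than only even integers, and the correct application of Jensen's inequality---in particular reconciling the normalization $1/n^2$ with the number $n(n-1)$ of summands in the second estimate. I expect these to be the main, though not severe, obstacles; everything else reduces to the polynomial-growth inequalities, the mean value theorem, and the scaling identity $\Phi_i(zA(n))=z\,\Phi_i(A(n))$.
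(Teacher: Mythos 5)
Your proposal is correct and follows essentially the same route as the paper: reduce via polynomial growth to moments of eigenvalues, use Jensen's inequality with the concave exponent $\nu/\gamma\le 1$, control the divided difference by (a form of) the mean value theorem (the paper uses the integral representation $\frac{f(x)-f(y)}{x-y}=\int_0^1 f'(\theta x+(1-\theta)y)\,\ud\theta$, which is equivalent), and invoke the uniform-in-$n$ convergence of $\frac1n\E[\mathrm{Tr}(A(n)^{\ell})]$ to semicircle moments from \cite{anderson2010introduction}. If anything, you are slightly more careful than the paper on two points it leaves implicit: the domination $|x|^{p}\le 1+x^{2k}$ needed when $2a\gamma$ is not an even integer, and the bookkeeping reconciling the $1/n^2$ normalization with the $n(n-1)$ off-diagonal summands in the Jensen step.
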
 

\begin{proof}
First we prove \eqref{ineq:lemmawignerbound}. Since $f$ has polynomial growth, there exists $a\in\N$ and a constant $C_f>0$ that only depends on $f$, such that $|f(zx)|\leq C_f(1+|x|^{2a})$. In other words,  it is enough to show that there is $C_1>0$ such that
\begin{align}\label{ineq:lemmawignerboundaux}
\sup_{n\geq 1}\frac{1}{n}\sum_{i=1}^{n}\Norm{(\Phi_{i}(A(n)))^{2a}}_{{\rm L}^{\gamma}(\Omega)}^{\nu}
  &\leq C_1,
\end{align}
for all $a>1$. Notice that 
\begin{align*}
\Norm{(\Phi_{i}(A(n)))^{2a}}_{{\rm L}^{\gamma}(\Omega)}^{\nu}
  &=\E\big[(\Phi_{i}(A(n)))^{2a\gamma}\big]^{\frac{\nu}{\gamma}},
\end{align*}
which by Jensen's inequality, leads to 
\begin{align*}
\frac{1}{n}\sum_{i=1}^{n}\Norm{(\Phi_{i}(A(n)))^{2a}}_{{\rm L}^{\gamma}(\Omega)}^{\nu}
  &=\frac{1}{n}\sum_{i=1}^{n}\E\big[(\Phi_{i}(A(n)))^{2a\gamma}\big]^{\frac{\nu}{\gamma}}
	\leq \bigg(\frac{1}{n}\sum_{i=1}^{n}\E\big[(\Phi_{i}(A(n)))^{2a\gamma}\big]\bigg)^{\frac{\nu}{\gamma}}.
\end{align*}
From \cite[Lemma~2.1.6]{anderson2010introduction}, it follows that for all positive integer $\ell\in\N$, the sequence $\frac{1}{n}\E[{\rm Tr}(A(n)^{\ell})]$ converges to the moment of order $\ell$ of the semicircle distribution $\mu_1^{\text{sc}}$. As a consequence, the term in the right-hand side of the previous inequality converges to 
$$\bigg(\int_{[-2,2]}|x|^{2a\gamma}\mu_1^{sc}(\ud x)\bigg)^{\frac{\nu}{\gamma}},$$
which gives the desired result.

 In order to prove \eqref{ineq:lemmawignerbound2}, we use the identity 
$$\frac{f(x)-f(y)}{x-y}=\int_{0}^{1}f^{\prime}(\theta x+(1-\theta)y)\ud\theta,$$ 
to write
\begin{align*}
\bigg|\frac{f(\Phi_{i}(zA(n)))-f(\Phi_{j} (zA(n)))}{\Phi_{i}(zA(n))-\Phi_{j}(zA(n))}\bigg|
  &\leq \int_{0}^{1}\big|f^{\prime}\Big(\theta \Phi_{i}(zA(n))+(1-\theta)\Phi_{j}(zA(n))\Big)\big|\ud\theta.
\end{align*}
Since $f\in\Pc$, there exists a constant $K_{f}>0$ and $b\in\N$, such that $|f|\leq K_f(1+|x|^{b})$. Thus, 
\begin{align*}
\bigg|\frac{f(\Phi_{i}(zA(n)))-f(\Phi_{j} (zA(n)))}{\Phi_{i}(zA(n))-\Phi_{j}(zA(n))}\bigg|
  &\leq K_f+K_f\int_{0}^{1}\big|(\theta \Phi_{i}(zA(n)+(1-\theta)\Phi_{j}(zA(n))\big|^b\ud \theta.
\end{align*}
After applying the binomial theorem, integrating the variable $\theta$ and using the bound $|z|\leq T$, we deduce that there exist $K>0$ such that
\begin{align*}
\bigg|\frac{f(\Phi_{i}(zA(n)))-f(\Phi_{j} (zA(n)))}{\Phi_{i}(zA(n))-\Phi_{j}(zA(n))}\bigg|
  &\leq K\Big(1+\big|\Phi_{i}(A(n)\big|^b+\big|\Phi_{j}(A(n)\big|^b\Big),
\end{align*}
which implies that 
\begin{align*}
\bigg|\frac{f(\Phi_{i}(zA(n)))-f(\Phi_{j} (zA(n)))}{\Phi_{i}(zA(n))-\Phi_{j}(zA(n))}\bigg|^2
  &\leq C_2\Big(1+\big|\Phi_{i}(A(n)\big|^{2b}+\big|\Phi_{j}^n(A(n)\big|^{2b}\Big),
\end{align*}
for some constant $C_2>0$ that only depending on $T$ and $f$. The inequality  in \eqref{ineq:lemmawignerbound2} then follows from the inequality in \eqref{ineq:lemmawignerboundaux}. The proof is now complete.
\end{proof}

\section{Asymptotic behavior of the covariance of $X_F$}\label{sec:covariance}
In this section we prove Theorem \ref{theorem:covariance}. To achieve this, we will first establish some smoothness properties (in the Malliavin sense) for $(\Phi_{1}(Y^{(n)}(t)),\dots,\Phi_{n}(Y^{(n)}(t)))$.  Let us recall the definition of the matrix valued Gaussian process $Y^{(n)}$ in \eqref{eq:Y},  as well as the eigenvalue functions $\Phi_{1},\dots, \Phi_{n}$ and the set of good matrices $G$, defined in Section \ref{sec:eigenvalues}. Since we will constantly deal with random variables involving the derivatives of the functions $\Phi_1,\dots, \Phi_n$ (which are functions only defined in the open dense subset $G$ of $\R^{d(n)}$ with $d(n) = n(n+1)/2$), we will use the following notation: for every real function $h:G\rightarrow\R$, defined only in an open dense subset $G\subset\R^{d(n)}$, we have that $\Pb[Y^{(n)}(t)\in G]=1$, and consequently, the random variable $h(Y^{(n)}(t))$ is well defined $\Pb$-almost everywhere, provided that $R(t,t)>0$. This justifies the use of the notation 
\begin{align*}
h(A)
  :=\left\{\begin{array}{cc}h(A)&\ \ \text{ if }\ A\in G\\0&\ \ \ \ \ \ \text{ if } \ A\in \R\backslash G.\end{array}\right.
\end{align*}

\begin{Lemma}\label{thm:Malliavinderivative}
For every $1\leq i\leq n$, the random variable $\Phi_{i}(Y^{(n)}(t))$ is twice Malliavin differentiable. The first and second Malliavin derivatives of $\Phi_{i}(Y^{(n)}(t))$, are given by $D\Phi_{i}(Y^{(n)}(t))=\{u_{k,h}(t) ; k\leq h\}$ and $D^2\Phi_{i}(Y^{(n)}(t))=\{u_{k,h}^{p,q}(t) ; k\leq h,\ p\leq q\}$, where 
\[
u_{k,h}(t):=\frac{\partial\Phi_{i}}{\partial x_{k,h}}(Y^{(n)}(t))\Indi{[0,t]}\quad\textrm{ and }\quad
u_{k,h}^{p,q}(t) 
  :=\frac{\partial^{2}\Phi_{i}}{\partial x_{k,h}\partial x_{p,q}}(Y^{(n)}(t))\Indi{[0,t]}^{\otimes 2}.
\]
\end{Lemma}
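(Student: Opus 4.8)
The plan is to treat $\Phi_i(Y^{(n)}(t))$ as the composition of the deterministic $i$-th eigenvalue map $\Phi_i\colon\R^{d(n)}\to\R$ with the Gaussian vector of matrix entries $(Y^{(n)}_{k,h}(t))_{k\le h}$, and to justify the chain rule despite the fact that $\Phi_i$ is not globally smooth. By \eqref{eq:Y} each entry $Y^{(n)}_{k,h}(t)$ is a deterministic multiple of $X_{k,h}(t)=X^{(k,h)}(\Indi{[0,t]})$, hence lies in $\D^{r,p}$ for all $r,p$, with first Malliavin derivative a constant multiple of $\Indi{[0,t]}$ carried by the $(k,h)$-coordinate of $\Hg^{d(n)}$ and vanishing second derivative. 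Since $R(t,t)>0$ the entries form a nondegenerate Gaussian vector and, as recalled just before the statement, $\Pb[Y^{(n)}(t)\in G]=1$; consequently the partial derivatives of $\Phi_i$ produced by Lemma \ref{lem:app}, namely \eqref{eq:D1PhiUpc} and \eqref{eq:D2partialPhiUp}, are defined $\Pb$-almost surely, and the genuine task reduces to promoting the pointwise chain rule on $G$ to true Malliavin differentiability.

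For the first derivative I would use the Lipschitz chain rule. By Weyl's inequality each $\Phi_i$ is $1$-Lipschitz in operator norm, hence globally Lipschitz in the entries; together with the nondegeneracy of the Gaussian vector, the standard criterion for Lipschitz functionals (see, e.g., \cite{nualart2006malliavin}) yields $\Phi_i(Y^{(n)}(t))\in\D^{1,2}$ and the asserted formula $D\Phi_i(Y^{(n)}(t))=\{\,\tfrac{\partial\Phi_i}{\partial x_{k,h}}(Y^{(n)}(t))\Indi{[0,t]}\,\}$, the entrywise constants being absorbed into the convention for $\partial/\partial x_{k,h}$. Moreover, specializing \eqref{eq:VandW} gives $\sum_{k\le h}\big(\tfrac{\partial\Phi_i}{\partial x_{k,h}}\big)^2=2$, so $\Norm{D\Phi_i(Y^{(n)}(t))}_{\Hg^{d(n)}}$ is bounded and in fact $\Phi_i(Y^{(n)}(t))\in\D^{1,p}$ for every $p\ge1$.

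The second derivative is the delicate step, because the first-order partials $\partial\Phi_i/\partial x_{k,h}=V^{i,i}_{k,h}$ are bounded but not globally Lipschitz: by \eqref{eq:D2partialPhiUp} the second-order partials blow up like $(\Phi_i-\Phi_j)^{-1}$ at eigenvalue collisions. I would therefore work in the localized domain. On $G$ the spectrum is simple and $\Phi_i$ is real-analytic, so for each $m$ one can build a $C^2$ map $\Phi_i^{(m)}$ with bounded derivatives agreeing with $\Phi_i$ on the set $\Omega_m=\{\min_{j\ne i}|\Phi_i-\Phi_j|>1/m\}$; the smooth chain rule applies to $\Phi_i^{(m)}(Y^{(n)}(t))\in\D^{2,2}$, whose Malliavin derivatives are exactly the expressions built from \eqref{eq:D1PhiUpc} and \eqref{eq:D2partialPhiUp}. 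Since the events $\Omega_m$ increase to $\Omega$ almost surely (eigenvalues being $\Pb$-a.s.\ simple) and $\Phi_i^{(m)}(Y^{(n)}(t))=\Phi_i(Y^{(n)}(t))$ on $\Omega_m$, the locality of $D^2$ yields $\Phi_i(Y^{(n)}(t))\in\D^{2,2}_{\mathrm{loc}}$ with the stated second derivative.

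The main obstacle is that one cannot hope for membership in the \emph{full} space $\D^{2,2}$. Contracting \eqref{eq:D2partialPhiUp} with itself and using \eqref{eq:VandW} gives $\Norm{D^2\Phi_i(Y^{(n)}(t))}_{(\Hg^{d(n)})^{\otimes 2}}^2=4\,R(t,t)^2\sum_{j\ne i}(\Phi_i-\Phi_j)^{-2}$, so $\D^{2,2}$ would require the inverse squared gaps to be integrable against the GOE spectrum. For the real symmetric ensemble the level repulsion is only linear, so this expectation in fact diverges; the localized statement above is therefore the correct sense of ``twice Malliavin differentiable'' here. This is harmless downstream, since the quantities actually used are the linear statistics $\sum_i f(\Phi_i)$, for which \eqref{eq:Pikhpqdefprev} replaces the singular factor $(\Phi_i-\Phi_j)^{-1}$ by the bounded divided difference $\frac{f(\Phi_i)-f(\Phi_j)}{\Phi_i-\Phi_j}$, whose $L^2$ control is precisely Lemma \ref{lem:Wignernorm}.
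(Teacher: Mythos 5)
Your proposal takes a genuinely different route from the paper, and your skepticism at the second-derivative step is well founded. The paper's own proof is by Gaussian mollification: it sets $F_\varepsilon:=\Phi_i*p_\varepsilon(Y^{(n)}(t))$, commutes convolution and differentiation using the a.e.\ smoothness of $\Phi_i$ along coordinate lines (via \cite[Proposition~4.5]{jaramillo2018collision}), and then concludes by closability of $D$ and $D^2$ from the claimed convergences \eqref{eq:DPHImollifiedprime} in ${\rm L}^{2}(\Omega;\Hg^{d(n)})$ and ${\rm L}^{2}(\Omega;(\Hg^{d(n)})^{\otimes 2})$. Your Lipschitz chain rule (Weyl plus nondegeneracy of the Gaussian vector, cf.\ \cite{nualart2006malliavin}) is a clean substitute for the first-order part and even yields $\D^{1,p}$ for all $p$. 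For the second-order part your obstruction is real: by \eqref{eq:D2partialPhiUp} and \eqref{eq:VandW} one has
\begin{align*}
\Norm{D^2\Phi_i(Y^{(n)}(t))}_{(\Hg^{d(n)})^{\otimes 2}}^2
  &=4R(t,t)^2\sum_{j\neq i}\big(\Phi_i(Y^{(n)}(t))-\Phi_j(Y^{(n)}(t))\big)^{-2},
\end{align*}
exactly as you computed, and its expectation diverges because the repulsion of the real ensemble is only linear; already for $n=2$ the squared gap is $(Y_{1,1}-Y_{2,2})^2+4Y_{1,2}^2$, a nondegenerate two-dimensional Gaussian quadratic whose inverse has infinite mean. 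Consequently the target $B$ of the second convergence in \eqref{eq:DPHImollifiedprime} does not belong to ${\rm L}^{2}(\Omega;(\Hg^{d(n)})^{\otimes 2})$, so that convergence cannot hold, and the ``well-known fact'' invoked there ($\|p_\varepsilon*f-f\|_{{\rm L}^2(\mu)}\to0$) does not apply since it presupposes $f\in {\rm L}^2(\R^{d(n)},\mu)$. In other words, the paper's proof has a gap at precisely the step you isolate, and membership in the full space $\D^{2,2}$ fails; your localized conclusion $\D^{2,2}_{\rm loc}$ is a correct repair, and an alternative one, closer to the paper's argument, is to run the mollification in ${\rm L}^p$ with $p<2$ (linear repulsion gives $\E[\mathrm{gap}^{-p}]<\infty$ for $p<2$, so $\Phi_i(Y^{(n)}(t))\in\D^{2,p}$ for every $p<2$, just not $p=2$).

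Your closing observation also matches how the lemma is actually consumed: in Proposition \ref{Prop:main} and Section \ref{sec:tight} it is applied only through the linear statistics $Z_f^{(n)}(t)$, where the symmetrization \eqref{eq:Pikhpqdefprev} cancels the singular factor $(\Phi_i-\Phi_j)^{-1}$ into the divided difference of $f^{\prime}$, controlled in every ${\rm L}^{\gamma}$ by Lemma \ref{lem:Wignernorm}; so the downstream estimates are unaffected by the weakened form of the lemma. Two small technical points in your localization, both standard: the gap function $\min_{j\neq i}\Abs{\Phi_i-\Phi_j}$ is only Lipschitz, so the cutoff should be built from a smooth proxy (e.g.\ the discriminant, which is a polynomial in the matrix entries, or smooth functions of resolvent traces); and since $\Phi_i$ grows linearly you cannot make $\Phi_i^{(m)}$ have globally bounded derivatives, only polynomially bounded ones, which suffices because $Y^{(n)}(t)$ is Gaussian.
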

\begin{proof}
Let $A=\{A_{k,h}; k\leq h\}\in {\rm L}^{2}(\Hg^{d(n)})$ and $B=\{B_{k,h}^{p,q}; k\leq h, p\le q\}\in {\rm L}^{2}(\Hg^{d(n)})$ be defined as $A_{k,h}:=u_{k,h}(t)$ and $B_{k,h}^{p,q}:=u_{k,h}^{p,q}(t)$. Let $p_{\varepsilon}$ denote the $d(n)$-dimensional Gaussian kernel of variance $\epsilon$, defined by $p_{\varepsilon}(x):=(2\pi\varepsilon)^{-\frac{d(n)}{2}}\exp\{-\frac{\Abs{x}^2}{2\varepsilon}\}$. Then, the random variable $\Phi_{i}*p_{\varepsilon}(Y^{(n)}(t))$ is infinitely Malliavin differentiable and satisfies 
\[
\Phi_{i}*p_{\varepsilon}(Y^{(n)}(t))\xrightarrow[]{{\rm L}^{2}(\Omega)}\Phi_{i}(Y^{(n)}(t)).
\]
Thus, in order to prove the statement, it is enough  to show that 
\begin{equation}
D\Phi_{i}*p_{\varepsilon}(Y^{(n)}(t))\xrightarrow[]{{\rm L}^{2}(\Omega;\Hg^{d(n)})}A \quad \textrm{ and }\quad
D^2\Phi_{i}*p_{\varepsilon}(Y^{(n)}(t))\xrightarrow[]{{\rm L}^{2}(\Omega;\Hg^{d(n)})^{\otimes 2})}
 B.\label{eq:DPHImollifiedprime}
\end{equation}
In order to do so, we observe that  $D\Phi_{i}*p_{\varepsilon}(Y^{(n)}(t))=\{v_{k,h}(\varepsilon;t)\ ;\ k\leq h\}$ and $D^2\Phi_{i}*p_{\varepsilon}(Y^{(n)}(t))=\{v_{k,h}^{p,q}(\varepsilon;t)\ ;\ k\leq h,\ p\leq q\}$, where 
\[
v_{k,h}(\varepsilon;t):=\frac{\partial(\Phi_{i}*p_{\varepsilon})}{\partial x_{k,h}}(Y^{(n)}(t))\Indi{[0,t]},\quad \textrm{ and }\quad
v_{k,h}^{p,q}(\varepsilon;t):=\frac{\partial^{2}(\Phi_{i}*p_{\varepsilon})}{\partial x_{k,h}\partial x_{p,q}}(Y^{(n)}(t))\Indi{[0,t]}^{\otimes 2}.
\]
Hence, provided that we deduce
\begin{equation}\label{eq:derivconvPhi}
v_{k,h}(\varepsilon;t)
  =\frac{\partial\Phi_{i}}{\partial x_{k,h}}*p_{\varepsilon}(Y^{(n)}(t))\Indi{[0,t]},\quad \textrm{ and }\quad
v_{k,h}^{p,q}(\varepsilon;t) 
  =\frac{\partial^{2}\Phi_{i}}{\partial x_{k,h}\partial x_{p,q}}*p_{\varepsilon}(Y^{(n)}(t))\Indi{[0,t]}^{\otimes 2},
\end{equation}
we obtain   \eqref{eq:DPHImollifiedprime} by using the well-known fact 
$$\|p_{\varepsilon}*f-f\|_{{\rm L}^{2}(\R^{d(n)},\mu)}\rightarrow0, $$
as $\varepsilon$ goes to 0, 
for every measure $\mu$ defined in $\R^{d(n)}$ and every $f\in {\rm L}^{2}(\R^{d(n)},\mu)$. Notice that \eqref{eq:derivconvPhi} is equivalent to 
\begin{align}\label{eq:DPhimollified}
\frac{\partial(\Phi_{i}*p_{\varepsilon})}{\partial x_{k,h}}=\frac{\partial\Phi_{i}}{\partial x_{k,h}}*p_{\varepsilon},\ \ \ \ \ \ \ \ \ \text{and}\ \ \ \ \ \ \ \ \ \frac{\partial^{2}(\Phi_{i}*p_{\varepsilon})}{\partial x_{k,h}\partial x_{p,q}} 
  &=\frac{\partial^{2}\Phi_{i}}{\partial x_{k,h}\partial x_{p,q}}*p_{\varepsilon}.
\end{align}
In order to show \eqref{eq:DPhimollified}, we proceed as follows. Denote by $e^{p,q}=\{e_{k,h}^{p,q}; 1\leq k\leq h\leq h\}$ the $(k,h)$-canonical element of $\R^{d(n)}$, given by $e_{k,h}^{p,q}:=\delta_{k,p}\delta_{h,q}$. For every $y\in\R^{d(n)-1}$ of the form $y=\{y_{k,h};  1\leq k\leq h\leq n\ \text{ and }\ (k,h)\neq(p,q)\}$, consider the linear mapping $\pi^{p,q,y}:\R\rightarrow\R^{d(n)}$, given by $\pi^{p,q,y}(z)=\{\pi_{k,h}^{p,q,y}(z); k\leq h\}$, with
\begin{align*}
\pi_{k,h}^{p,q,y}(z)
  &:=\left\{\begin{array}{cc}y_{k,h}  &\text{ if }\ \ (k,h)\neq(p,q),\\ z&\text{ if }\ \ (k,h)=(p,q).\end{array}\right.
\end{align*}
Notice that for all $1\leq i\leq n$, the function $\Phi_i$ is infinitely differentiable in the complement of the set $\mathcal{S}_{\text{deg}}$ of $n\times n$ symmetric matrices with at least one repeated eigenvalue. In \cite[Proposition~4.5.]{jaramillo2018collision}, it was shown that the set $\mathcal{S}_{\text{deg}}^c$ is contained in the image of a smooth function defined over $\R^{d(n)-2}$. From this observation it easily follows that for almost all $y\in\R^{d(n)-1}$, the function $\Phi_{i}\circ \pi_{k,h}^{p,q,y}$ is infinitely differentiable. As a consequence, for every $x\in\R^{d(n)}$,
\begin{align*}
\frac{\partial(\Phi_{i}*p_{\varepsilon})}{\partial x_{k,h}}(x)
  &=\int_{\R^{d(n)}}\Phi_{i}(x-\xi)\frac{\partial p_{\varepsilon}}{\partial x_{k,h}}(\xi)\ud \xi\\
	&=\int_{\R^{d(n)-1}}\int_{\R}\Phi_{i}(x-\pi^{k,h,y}(z))\frac{\ud p_{\varepsilon}}{\ud z}(\pi^{k,h,y}(z))\ud z\ud y\\
	&=\int_{\R^{d(n)-1}}\int_{\R}\frac{\partial \Phi_{i}}{\partial x_{k,h}}(x-\pi^{k,h,y}(z))p_{\varepsilon}(\pi^{k,h,y}(z))\ud z\ud y.
\end{align*}
where the integration by parts in the last equality, is justified by the fact that the mapping $z\mapsto \Phi_{i}\circ \pi_{k,h}^{p,q,y}(z)$ is infinitely differentiable for almost all $y\in\R^{d(n)}$. From here, it easily follows that $\frac{\partial(\Phi_{i}*p_{\varepsilon})}{\partial x_{k,h}}(x)=\frac{\partial\Phi_{i}}{\partial x_{k,h}}*p_{\varepsilon}(x)$. To prove the second inequality in \eqref{eq:DPhimollified}, we proceed similarly, but replacing the function $\Phi_{i}$, with $\frac{\partial\Phi_{i}}{\partial x_{k,h}}$.
\end{proof}

Before proving Theorem~\ref{theorem:covariance}, we establish the following auxiliary lemma.

\begin{Lemma}
\label{lem:tauvoic}
Assume that $\xi$ and $\tilde{\xi}$ are free standard semicircular non-commutative random variables. If $\varphi,\psi\in \mathcal{C}(\mathbb{R};\mathbb{R})$ and $z\in[0,1)$, then
\begin{equation}
\label{eq:tauvoic}
    \tau\bigg[\varphi\left(z\xi+\sqrt{1-z^2}\widetilde{\xi}\right)\psi(\xi)\bigg]
  =\int_{[-2,2]^2}\varphi(x)\psi(y)K_z(x,y)\mu_1^{sc}(\ud x)\mu_1^{sc}(\ud y),
\end{equation}
with $K_z(x,y)$ defined as in \eqref{eq:Kdef0}.
\end{Lemma}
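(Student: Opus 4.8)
The plan is to reduce the identity to two ingredients: the covariance relation \eqref{eq:isometrysemicircular}, and the fact that the Chebyshev polynomials $\{U_q\}_{q\ge 0}$ form a complete orthonormal system of ${\rm L}^2([-2,2],\mu_1^{sc})$. First I would set $a:=z\xi+\sqrt{1-z^2}\,\widetilde{\xi}$ and $b:=\xi$. Since $\xi$ and $\widetilde{\xi}$ are free standard semicircular elements, every linear combination of them is again semicircular, so $(a,b)$ is a mean-zero jointly semicircular pair. A direct computation using freeness and centering gives $\tau[a^2]=z^2+(1-z^2)=1$, $\tau[b^2]=1$, and $\tau[a^*b]=\tau[ab]=z\,\tau[\xi^2]+\sqrt{1-z^2}\,\tau[\widetilde{\xi}\xi]=z$. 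Applying \eqref{eq:isometrysemicircular} to $(a,b)$ then yields the key relation $\tau[U_m(a)U_n(b)]=\delta_{m,n}\,z^{m}$ for all $m,n\ge 0$.

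Next I would record that $\{U_q\}_{q\ge 0}$ is a complete orthonormal system of ${\rm L}^2([-2,2],\mu_1^{sc})$. Orthonormality is the special case $f=g=\Indi{[0,1]}$ of \eqref{eq:IdentityPol}, which reads $\int_{-2}^{2}U_m(x)U_n(x)\,\mu_1^{sc}(\ud x)=\delta_{m,n}$ once one uses that $S(\Indi{[0,1]})$ has distribution $\mu_1^{sc}$; completeness holds because $U_0,\dots,U_q$ span the polynomials of degree at most $q$, and polynomials are dense in ${\rm L}^2$ of a compactly supported measure by the Weierstrass theorem. Since $\varphi,\psi$ are continuous and $\mu_1^{sc}$ is supported on the compact set $[-2,2]$, both lie in ${\rm L}^2([-2,2],\mu_1^{sc})$, so I may write the ${\rm L}^2(\mu_1^{sc})$-convergent expansions $\varphi=\sum_{m}\widehat{\varphi}_m U_m$ and $\psi=\sum_{n}\widehat{\psi}_n U_n$, with $\widehat{\varphi}_m:=\int\varphi\,U_m\,\ud\mu_1^{sc}$ and $\widehat{\psi}_n:=\int\psi\,U_n\,\ud\mu_1^{sc}$.

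For the left-hand side I would invoke that, for a self-adjoint element with spectral distribution $\mu_1^{sc}$, the functional calculus $\theta\mapsto\theta(\,\cdot\,)$ is an isometry from ${\rm L}^2(\mu_1^{sc})$ into ${\rm L}^2(\mathcal{A},\tau)$; hence $\varphi(a)=\sum_m\widehat{\varphi}_m U_m(a)$ and $\psi(b)=\sum_n\widehat{\psi}_n U_n(b)$ converge in ${\rm L}^2(\mathcal{A},\tau)$. As the bilinear form $(P,Q)\mapsto\tau[PQ]$ is continuous on ${\rm L}^2(\mathcal{A},\tau)$ by Cauchy--Schwarz (the elements involved being self-adjoint), I may pass the trace through the double series and use the key relation to get $\tau[\varphi(a)\psi(b)]=\sum_{m,n}\widehat{\varphi}_m\widehat{\psi}_n\,\delta_{m,n}z^{m}=\sum_{q\ge 0}\widehat{\varphi}_q\widehat{\psi}_q\,z^{q}$, the series being absolutely convergent since $\sum_q|\widehat{\varphi}_q\widehat{\psi}_q|\le\Norm{\varphi}_{{\rm L}^2(\mu_1^{sc})}\Norm{\psi}_{{\rm L}^2(\mu_1^{sc})}$. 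For the right-hand side I would substitute $K_z(x,y)=\sum_q U_q(x)U_q(y)z^q$ from Lemma~\ref{lem:K} (valid $\mu_1^{sc}\otimes\mu_1^{sc}$-a.e.\ on $(-2,2)^2$) and justify the interchange of $\sum_q$ with the double integral by Tonelli: using $\Norm{U_q}_{{\rm L}^2(\mu_1^{sc})}=1$ and Cauchy--Schwarz, $\sum_q z^q\big(\int|\varphi U_q|\,\ud\mu_1^{sc}\big)\big(\int|\psi U_q|\,\ud\mu_1^{sc}\big)\le\Norm{\varphi}_{{\rm L}^2(\mu_1^{sc})}\Norm{\psi}_{{\rm L}^2(\mu_1^{sc})}\sum_q z^q<\infty$. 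After the interchange, orthonormality collapses the resulting triple sum to $\sum_{q\ge 0}\widehat{\varphi}_q\widehat{\psi}_q\,z^{q}$, which matches the left-hand side and proves \eqref{eq:tauvoic}.

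The main obstacle is purely the analytic bookkeeping of the two term-by-term interchanges; there is no combinatorial difficulty once the covariance $\tau[a^*b]=z$ and the orthonormal-basis property are in place. On the trace side the interchange is controlled by ${\rm L}^2(\tau)$-convergence of the chaos expansions together with Cauchy--Schwarz, while on the integral side it rests on the absolute convergence of $K_z$ supplied by Lemma~\ref{lem:K} together with the geometric factor $z^q$ from $0\le z<1$ and the ${\rm L}^2$-normalization of the $U_q$. Once these are justified, the algebra is immediate.
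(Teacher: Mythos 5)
Your proof is correct and takes essentially the same approach as the paper's: both hinge on applying \eqref{eq:isometrysemicircular} to the jointly semicircular pair $\big(z\xi+\sqrt{1-z^2}\,\widetilde{\xi},\,\xi\big)$ with covariance $z$, together with the orthonormality of the Chebyshev polynomials $U_q$ with respect to $\mu_1^{sc}$ and the series representation of $K_z$ from Lemma~\ref{lem:K}. The only difference is presentational: where the paper reduces to $\varphi=U_{m_1}$, $\psi=U_{m_2}$ via a brief appeal to Stone--Weierstrass, you carry out that density step explicitly through ${\rm L}^2(\mu_1^{sc})$-expansions and justify both term-by-term interchanges, which if anything supplies details the paper leaves implicit.
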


\begin{proof}
For ease of notation, let $a = z\xi+\sqrt{1-z^2}\widetilde{\xi}$ and $b = \tilde{\xi}$. Since both $a$ and $b$ are (correlated) standard semicircular non-commutative random variables, a straightforward application of functional calculus implies that $\varphi(a) = (\varphi\circ\Indi{[-2,2]})(a)$ and $\psi(b) = (\psi\circ\Indi{[-2,2]})(b)$. Similarly, observe that the right hand side of \eqref{eq:tauvoic} remains the same if we replace $\varphi$ with $\varphi\circ\Indi{[-2,2]}$ and $\psi$ with $\psi\circ\Indi{[-2,2]}$. Hence, without of generality, we can assume that both $\varphi$ and $\psi$ are supported over $[-2,2]$.

Let $U_m(x)$ denote the $m$-th Chebyshev polynomial of the second kind on $[-2,2]$, defined by \eqref{eq:Chebyshevdef}. By the Stone-Weierstrass theorem, we can assume without loss of generality that $\varphi(x)=U_{m_{1}}(x)$ and $\psi(y)=U_{m_2}(y)$ for some $m_1,m_2\in\N$. Since the Chebyshev polynomials form an orthonormal system with respect to $\mu_1^{sc}(\ud x)$, 
the measure $\kappa_z(\ud x,\ud y)$ defined by 
\begin{align*}
\kappa_z(\ud x,\ud y)
  &:=\Indi{[-2,2]^2}(x,y)K_{z}(x,y)\mu_1^{sc}(\ud x)\mu_1^{sc}(\ud y), 
\end{align*}
satisfies 
\begin{align}\label{eq:UorthtauRHS}
\int_{\R^2}\varphi(x)\psi(y)\kappa_z(\ud x,\ud y)
  &=\int_{\R^2}U_{m_1}(x)U_{m_2}(y)\kappa_z(\ud x,\ud y)=\delta_{m_1,m_2}z^{m_{1}}.
\end{align}
On the other hand, by relation \eqref{eq:isometrysemicircular}, we have 
\begin{align}\label{eq:Uorthtau}
\tau\left[U_{m_1}\Big(z\xi+\sqrt{1-z^2}\widetilde{\xi}\Big)U_{m_{2}}(\xi)\right]
  &=\delta_{m_1,m_2}\tau\left[\Big(z\xi+\sqrt{1-z^2}\widetilde{\xi}\Big)^*\xi\right]^{m_1}=\delta_{m_1,m_2}z^{m_{1}}.
\end{align}
By combining the identities \eqref{eq:UorthtauRHS} and \eqref{eq:Uorthtau}, we get 
\begin{align*}
\tau\left[\varphi\Big(z\xi+\sqrt{1-z^2}\widetilde{\xi}\Big)\psi(\xi)\right]
  &=\int_{\R^2}\varphi(x)\psi(y)\kappa_z(\ud x,\ud y),
\end{align*}
as required.
\end{proof}

\noindent We are now in position of proving Theorem \ref{theorem:covariance}.
\begin{proof}[Proof of Theorem \ref{theorem:covariance}]
\noindent  We recall the smoothness properties (in the Malliavin sense) of the random vector $$\Phi_{1}(Y^{(n)}(t)),\dots, \Phi_{n}(Y^{(n)}(t)),$$
which were introduced in Lemma \ref{thm:Malliavinderivative} and will be repeatedly used in the sequel. By \eqref{eq:deltaDFI}, every centered random variables $F,G\in \D^{1,2}$, satisfy
\begin{align*}
\E[FG]
  &=\E[-\delta (D \mathcal{L}^{-1}F)G]	=\E\left[\Ip{-D  \mathcal{L}^{-1}F,DG}_{\Hg^{d(n)}}\right].
\end{align*}
In particular, for every $f,g\in\Pc$ and $s,t>0$, 
\begin{align}\label{eq:covfirstexp}
\E\left[Z_{f}^{(n)}(t)Z_{g}^{(n)}(s)\right]
  &=\E\left[\Ip{-D  \mathcal{L}^{-1}Z_{f}^{(n)}(t),DZ_{g}^{(n)}(s)}_{\Hg^{d(n)}}\right].
\end{align}
By \eqref{eq:Mehler} and \eqref{eq:Mehler2}, we get
\begin{align*}
-D  \mathcal{L}^{-1}Z_{f}^{(n)}(t)
  &=\int_{0}^{\infty}DP_{\theta}\big[Z_{f}^{(n)}(t)\big]\ud \theta=\int_{0}^{\infty}e^{-\theta}P_{\theta}\big[DZ_{f}^{(n)}(t)\big]\ud \theta.
\end{align*}
On the other hand, Lemma \ref{thm:Malliavinderivative} implies  that $DZ_{f}^{(n)}(t)=\{v_{k,h}(t); 1\leq k,h\leq n\}$, with 
\begin{align*}
v_{k,h}(t)
  &=\frac{1}{\sqrt{n}}\sum_{i=1}^{n}f^{\prime}\big(\Phi_{i}(Y^{(n)}(t))\big)\frac{\partial\Phi_{i}}{\partial x_{k,h}}(Y^{(n)}(t))\Indi{[0,t]}.
\end{align*}
Using equation \eqref{eq:D1PhiUpc} and denoting by $U^{*}(Y^{(n)}(t))$ the transpose of $U(Y^{(n)}(t))$, we can rewrite $v_{k,h}(t)$ as
\[
\begin{split}
v_{k,h}(t)
	&=\frac{1}{\sqrt{n}}\sum_{i=1}^{n}f^{\prime}\big(\Phi_{i}(Y^{(n)}(t))\big)\bigg(\big(U_{h,i}U_{i,k}^*+U_{k,i}U_{i,h}^*\big)(Y^{(n)}(t))\Indi{\{k\neq h\}}\\
	&\hspace{7cm}+\sqrt{2}U_{k,i}U_{i,k}^*(Y^{(n)}(t))\Indi{\{k=h\}}]\bigg)\Indi{[0,t]}\\
	&=\frac{1}{\sqrt{n}}\left(\Big(f^{\prime}(Y^{(n)}(t))_{h,k}+f^{\prime}(Y^{(n)}(t))_{k,h}\Big)\Indi{\{k\neq h\}}+\sqrt{2}f^{\prime}(Y^{(n)}(t))_{k,k}\Indi{\{k=h\}}\right)\Indi{[0,t]}.
\end{split}
\]
Therefore, using Mehler's formula \eqref{eq:Mehler} as well as the fact that $f^{\prime}(Y^{(n)}(t))$ is self-adjoint, we deduce that $-D \mathcal{L}^{-1}Z_{f}^{(n)}(t)=\{u_{k,h}(t); 1\leq k,h\leq n\}$, where
\begin{align*}
u_{k,h}(t)
	&:=\frac{\eta_{k,h}}{\sqrt{n}}\int_{0}^{\infty}e^{-\theta}\widetilde{\E}\Big[\Big(f^{\prime}\big(e^{-\theta}Y^{(n)}(t)+\sqrt{1-e^{-2\theta}}\widetilde{Y}^{(n)}(t)\big)\Big)_{k,h}\Big]\ud\theta\Indi{[0,t]},
\end{align*}
where $\widetilde{Y}^{(n)}$ is an independent copy of $Y^{(n)}$, $\eta_{k,h}:=2\Indi{\{k\neq h\}}+\sqrt{2}\Indi{\{k=h\}}$ and $\widetilde{\E}$ denotes the expectation with respect to $\widetilde{Y}^{(n)}$. Similarly, we deduce  $DZ_{g}^{(n)}(s)=\{\omega_{k,h}(s); 1\leq k,h\leq n\}$, where
\begin{align*}
\omega_{k,h}(s)
	&:=\frac{\eta_{k,h}}{\sqrt{n}}(g^{\prime}(Y^{(n)}(s)))_{k,h}\Indi{[0,s]}.
\end{align*}
As a consequence, we have 
\begin{multline*}
\E\left[\Ip{-D  \mathcal{L}^{-1}Z_{f}^{(n)}(t),DZ_{g}^{(n)}(s)}_{\Hg^{\otimes d(n)}}\right]\\
\begin{aligned}
  &=\int_{0}^{\infty}e^{-\theta}\E\left[\widetilde{\E}\left[\sum_{k\leq h}\frac{\eta_{k,h}^2}{n}\left(f^{\prime}(e^{-\theta}Y^{(n)}(t)+\sqrt{1-e^{-2\theta}}\widetilde{Y}^{(n)}(t))\right)_{k,h}\left(g^{\prime}(Y^{(n)}(s))\right)_{k,h}\right]\right]\ud\theta\\
	&=\frac{2}{n}\int_{0}^{\infty}e^{-\theta}\E\left[\text{Tr}\Big(f^{\prime}(e^{-\theta}Y^{(n)}(t)+\sqrt{1-e^{-2\theta}}\widetilde{Y}^{(n)}(t))g^{\prime}(Y^{(n)}(s))\Big)\right]\ud\theta.
\end{aligned}
\end{multline*}
Hence, making the change of variable $z:=e^{-\theta}$, we get 
\begin{equation}
\label{eq:varDLinvcov}
\begin{split}
\E&\left[\Ip{-D  \mathcal{L}^{-1}Z_{f}^{(n)}(t),DZ_{g}^{(n)}(s)}_{\Hg^{\otimes d(n)}}\right]\\
  &\hspace{3cm}=\frac{2}{n}\int_{0}^{1}\E\left[\text{Tr}\Big(f^{\prime}\Big(zY^{(n)}(t)+\sqrt{1-z^2}\widetilde{Y}^{(n)}(t))g^{\prime}(Y^{(n)}(s)\Big)\Big)\right]\ud z.
  \end{split}
\end{equation}
Let ${A}(n)$ and $\widetilde{A}(n)$ be two independent standard Gaussian orthogonal ensembles and recall the definitions of  $\sigma_s$ and $\rho_{s,t}$  in \eqref{eq:Rfunctionals}. It is not difficult to deduce
\begin{align*}
\bigg(zY^{(n)}(t)+\sqrt{1-z^2}\widetilde{Y}^{(n)}(t), Y^{(n)}(s)\bigg)
  &\stackrel{(d)}{=}\bigg(\sigma_t\left(\rho_{s,t}zA(n)+\sqrt{1-\rho_{s,t}^2z^2}\widetilde{A}(n)\right),\sigma_sA(n)\bigg),
\end{align*}
where $``\stackrel{(d)}{=}"$ means identity in distribution. Thus, by Voiculescu theorem (see for instance \cite[Theorem~3.3]
{anderson2010introduction}), we get 
\begin{multline}\label{eq:tracelimit}
\lim_{n\rightarrow\infty}\frac{1}{n}\E\left[\text{Tr}\left(f^{\prime}\Big(zY^{(n)}(t)+\sqrt{1-z^2}\widetilde{Y}^{(n)}(t)\Big)\right)g^{\prime}\Big(Y^{(n)}(s)\Big)\right]\\
\begin{aligned}
  &=\lim_{n\rightarrow\infty}\frac{1}{n}\E\left[\text{Tr}\left(f^{\prime}\left(\sigma_t\left(\rho_{s,t}zA(n)+\sqrt{1-\rho_{s,t}^2z^2}\widetilde{A}(n)\right)\right)g^{\prime}\big(\sigma_sA(n)\big)\right)\right]\\
  &=\tau\left[(f^{\prime}\circ m_{\sigma_t})\left((z\rho_{s,t})\xi+\sqrt{1-(z\rho_{s,t})^2}\widetilde{\xi}\right)(g^{\prime}\circ m_{\sigma_s})(\xi)\right],
\end{aligned}
\end{multline}
where $m_{\sigma}$ denotes the multiplication function $m_{\sigma}(y):=\sigma y$,  and  $\xi, \widetilde{\xi}$ are self-adjoint free random variables with standard semicircular distribution, defined on a non-commutative probability space $(\mathcal{A},\tau)$. By Lemma~\ref{lem:tauvoic}
\begin{align*}
\tau\bigg[\varphi\left(z\xi+\sqrt{1-z^2}\widetilde{\xi}\right)\psi(\xi)\bigg]
  &=\int_{[-2,2]^2}\varphi(x)\psi(y)K_z(x,y)\mu_1^{sc}(\ud x)\mu_1^{sc}(\ud y).
\end{align*}
In addition, by the Cauchy-Schwarz inequality and Wigner's theorem,
\begin{multline*}
\frac{1}{n}\E\left[\text{Tr}\left(f^{\prime}\left(\sigma_t\left(\rho_{s,t}zA(n)+\sqrt{1-\rho_{s,t}^2z^2}\widetilde{A}(n)\right)\right)g^{\prime}\big(\sigma_sA(n)\big)\right)\right]\\
\begin{aligned}
  &\leq \bigg(
  \frac{1}{n}\E\big[\text{Tr}\big(f^{\prime}\big(\sigma_tA(n)\big)^2\big)\big]
  \bigg)^{\frac{1}{2}}\bigg(
  \frac{1}{n}\E\big[\text{Tr}\big(g^{\prime}\big(\sigma_sA(n)\big)^2\big)\big]
  \bigg)^{\frac{1}{2}}
    \leq C_{s,t},
\end{aligned}
\end{multline*}
for some constant $C_{s,t}>0$ independent of $n$. Therefore, using the dominated convergence theorem, as well as \eqref{eq:covfirstexp}, \eqref{eq:varDLinvcov} and \eqref{eq:tracelimit}, we deduce that
\begin{align*}
\lim_{n\rightarrow\infty}\E\Big[Z_{f}^{(n)}(t)Z_{g}^{(n)}(s)\Big]
  &=\lim_{n\rightarrow\infty}\E\Big[\Ip{-D \mathcal{L}^{-1}Z_{f}^{(n)}(t),DZ_{g}^{(n)}(s)}_{\Hg^{\otimes d(n)}}\Big]\\
	&=2\int_{0}^{1}\int_{[-2,2]^{2}}f^{\prime}(\sigma_sx)g^{\prime}(\sigma_ty)K_{z\rho_{s,t}}(x,y)\mu_1^{sc}(\ud x)\mu_1^{sc}(\ud y)\ud z.
\end{align*}
Making the changes of variable $\tilde{x}:=\sigma_s x$ and  $\tilde{y}:=\sigma_t y$, we obtain 
\begin{align*}
\lim_{n\rightarrow\infty}\E\Big[Z_{f}^{(n)}(t)Z_{g}^{(n)}(s)\Big]
  &=2\int_{0}^{1}\int_{\R^{2}}f^{\prime}(\tilde{x})g^{\prime}(\tilde{y})K_{z\rho_{s,t}}(\tilde{x}/\sigma_s,\tilde{y}/\sigma_t)\mu_{\sigma_s}^{sc}(\ud\tilde{x})\mu_{\sigma_t}^{sc}(\ud\tilde{y})\ud z.
\end{align*}
Theorem~\ref{theorem:covariance} easily follows from the previous expression. The proof is now complete.
\end{proof}

\section{Convergence of finite dimensional distributions}\label{sec:fdd}
In this section we prove the stable convergence of the finite dimensional distributions of $Z^{(n)}_{F}$, to those of $\Lambda_{F}$, for $F\in \mathcal{P}^r$ with $r\ge 1$, 
and find bounds for the distance in total variation of $Z^{n}_{f}(t)$ to its limit distribution, with $f\in \mathcal{P}$.

\begin{Proposition}\label{Prop:main}
Assume that  the process $X$ satisfies the conditions \textbf{(H1)} and \textbf{(H2)}, introduced in Section \ref{Sec:contrib}. We recall as well the definition of $\mathcal{P}$, given by \eqref{eq:Pmathcaldef}. For every $r,y\in\N$ and $F=(f_1,\dots, f_{r})\in \Pc^r$, and $t_1,\dots, t_{\ell}\geq0$,  there exists $C>0$, such that
\begin{align*}
(Z_{F}^{{(n)}}(t_1),\dots, Z_{F}^{(n)}(t_\ell))\stackrel{\mathcal{S}}{\longrightarrow}(\Lambda_{F}(t_1),\dots, \Lambda_{F}(t_\ell)).
\end{align*}
Moreover,  for $f\in \mathcal{P}$, we have
\begin{align*}
d_{TV}\left(\mu_{Z_f^{(n)}}(t),\mu_{\Lambda_{f}(t)}\right)
  &\leq \frac{C}{\sqrt{n}},
\end{align*}
for some constant $C>0$ independent of $n$.

\end{Proposition}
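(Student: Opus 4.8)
The plan is to obtain both conclusions from the multivariate fourth--moment theorem, Theorem~\ref{thm:CLTWienerchaos}, applied to the $d:=r\ell$ dimensional vector $W^{(n)}$ whose entries are the centred variables $Z_{f_k}^{(n)}(t_j)$, $1\le k\le r$, $1\le j\le\ell$, viewed as functionals of the Gaussian family $\{X_{i,j};i\le j\}$, so that the ambient Hilbert space is $\Hg^{d(n)}$ with $d(n)=n(n+1)/2$. Each entry belongs to $\D^{2,4}$ and is centred by construction. Condition~(i) of Theorem~\ref{thm:CLTWienerchaos}, the convergence of the covariance matrix, is exactly Theorem~\ref{theorem:covariance}: the entry indexed by $((k,j),(k',j'))$ converges to $2\int_{\R^2}f_k'(x)f_{k'}'(y)\,\nu^{\rho_{t_j,t_{j'}}}_{\sigma_{t_j},\sigma_{t_{j'}}}(\ud x,\ud y)=\E[\Lambda_{f_k}(t_j)\Lambda_{f_{k'}}(t_{j'})]$. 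It therefore remains to check the boundedness condition~(ii) and the contraction condition~(iii) for each scalar $Z_f^{(n)}(t)$.

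For the derivatives I would use Lemma~\ref{thm:Malliavinderivative} together with the chain rule applied to $Z_f^{(n)}(t)=\sum_{i=1}^n f(\Phi_{i}(Y^{(n)}(t)))-\E[\,\cdot\,]$, the factor $n^{-1/2}$ in \eqref{eq:Y} producing one power of $n^{-1/2}$ per differentiation. This yields, up to the time factors $\Indi{[0,t]}$, $DZ_f^{(n)}(t)=\{n^{-1/2}\Psi_{k,h}[f'](Y^{(n)}(t))\}_{k\le h}$ and $D^2Z_f^{(n)}(t)=\{n^{-1}(\Psi^{p,q}_{k,h}[f'']+\Pi^{p,q}_{k,h}[f'])(Y^{(n)}(t))\}_{k\le h,\,p\le q}$, in the notation of \eqref{eq:psik2def} and \eqref{eq:Pikhpqdef}. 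The isometry \eqref{eq:isimetrypsik1} then gives $\Norm{DZ_f^{(n)}(t)}_{\Hg^{d(n)}}^2=2\sigma_t^2 n^{-1}\sum_i f'(\Phi_{i}(Y^{(n)}(t)))^2$; taking the fourth power, using Cauchy--Schwarz and $Y^{(n)}(t)\stackrel{d}{=}\sigma_tA(n)$, condition~(ii) follows from the Wigner moment bound \eqref{ineq:lemmawignerbound} of Lemma~\ref{lem:Wignernorm} (applied to $(f')^2$).

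The core of the proof is condition~(iii). Let $G$ denote the symmetric $d(n)\times d(n)$ matrix with entries $G^{p,q}_{k,h}=(\Psi^{p,q}_{k,h}[f'']+\Pi^{p,q}_{k,h}[f'])(Y^{(n)}(t))$. A direct evaluation of the contraction, using that both tensor slots carry $\Indi{[0,t]}$ and $\Ip{\Indi{[0,t]},\Indi{[0,t]}}_{\Hg}=\sigma_t^2$, reduces the relevant norm to
\[
\Norm{D^2Z_f^{(n)}(t)\otimes_{1}D^2Z_f^{(n)}(t)}_{(\Hg^{d(n)})^{\otimes2}}^2=\frac{\sigma_t^8}{n^4}\,\mathrm{Tr}(G^4).
\]
The decisive point is that $G$ is diagonalised by the vectors $(V^{i,i}_{k,h})_{k\le h}$ and $(V^{i,j}_{k,h})_{k\le h}$, $i<j$: by the orthogonality relation \eqref{eq:VandW} these form an orthogonal basis of $\R^{d(n)}$, on which $\Psi[f'']$ acts with eigenvalues $2f''(\Phi_{i})$ and $\Pi[f']$ acts with eigenvalues $2(f'(\Phi_{i})-f'(\Phi_{j}))/(\Phi_{i}-\Phi_{j})$, each operator annihilating the eigenspaces of the other. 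Hence
\[
\mathrm{Tr}(G^4)=16\sum_{i=1}^n f''(\Phi_{i})^4+16\sum_{i<j}\Big(\tfrac{f'(\Phi_{i})-f'(\Phi_{j})}{\Phi_{i}-\Phi_{j}}\Big)^4,
\]
so that $\E[\mathrm{Tr}(G^4)]=O(n^2)$ by the moment bounds \eqref{ineq:lemmawignerbound}--\eqref{ineq:lemmawignerbound2} (applied to $(f'')^2$ and, via the mean--value/polynomial--growth estimate used in the proof of Lemma~\ref{lem:Wignernorm}, to the divided differences of $f'$). Therefore $\E[\Norm{D^2Z_f^{(n)}(t)\otimes_{1}D^2Z_f^{(n)}(t)}^2]=O(n^{-2})$, which simultaneously verifies condition~(iii) and, through the quantitative estimate of Theorem~\ref{thm:CLTWienerchaos}, gives $d_{TV}(\mathcal{L}(Z_f^{(n)}(t)),\mathcal{L}(\Lambda_f(t)))\le C(n^{-2})^{1/4}=C/\sqrt n$. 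I expect the spectral reduction of the contraction norm to $\mathrm{Tr}(G^4)$, and obtaining the sharp $O(n^{-2})$ rate needed for the $n^{-1/2}$ bound, to be the main obstacle.

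Finally, Theorem~\ref{thm:CLTWienerchaos} yields only convergence in law, and to upgrade it to stable convergence toward the independent limit I would test against the total family $\{\exp(\ib X(h)):h\in\Hg^{d(n)}\}$ of $L^2(\Omega,\Fc)$ and establish the joint convergence of $(W^{(n)},X(h_1),\dots,X(h_p))$ to $(\Lambda_F,X(h_1),\dots,X(h_p))$ with $\Lambda_F$ independent of the second block. This follows from the multivariate chaos CLT for the enlarged vector, the fixed first--chaos components $X(h_l)$ contributing trivially to (ii)--(iii) since $D^2X(h_l)=0$; the only extra input is that the asymptotic cross--covariances $\lim_n\E[Z_f^{(n)}(t)X(h_l)]=\lim_n\Ip{\E[DZ_f^{(n)}(t)],h_l}_{\Hg^{d(n)}}$ vanish, because $\E[DZ_f^{(n)}(t)]$ spreads an $O(1)$ norm over $\Theta(n)$ coordinates and its pairing with any fixed $h_l$ is $O(n^{-1/2})$. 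Approximating an arbitrary bounded $\Fc$--measurable $M$ by cylindrical functionals then delivers the stated stable convergence.
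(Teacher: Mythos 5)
Your proposal is correct and its overall architecture is exactly the paper's: reduce stable convergence, via approximation of $M$ by cylindrical functionals, to joint convergence in law of $(Z_F^{(n)}(t_1),\dots,Z_F^{(n)}(t_\ell))$ with a fixed first-chaos Gaussian block, then verify conditions (i)--(iii) of Theorem~\ref{thm:CLTWienerchaos}, with (i) given by Theorem~\ref{theorem:covariance}, (ii) by \eqref{eq:isimetrypsik1} and Lemma~\ref{lem:Wignernorm}, (iii) by the second-derivative structure from Lemma~\ref{thm:Malliavinderivative}, and the total-variation rate obtained as the fourth root of the $O(n^{-2})$ quantity in (iii).

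Where you genuinely diverge is in the treatment of condition (iii). The paper estimates $\Norm{D^2Z_f^{(n)}(t)\otimes_1 D^2Z_f^{(n)}(t)}^2$ by termwise Cauchy--Schwarz and the isometry identities \eqref{eq:isimetrypsik2} and \eqref{eq:fphisum24}, arriving at a bound of order $n^{-4}\big(\sum_i f''(\Phi_i)^2+\sum_{i\neq j}(\Delta f'/\Delta\Phi)^2\big)$ and hence $O(n^{-2})$ after Lemma~\ref{lem:Wignernorm}. You instead observe that the matrix $G=\Psi[f'']+\Pi[f']$ is \emph{exactly} diagonalized by the orthogonal family $\{V^{i,j}\}$ furnished by \eqref{eq:VandW}, with eigenvalues $2f''(\Phi_i)$ and $2(f'(\Phi_i)-f'(\Phi_j))/(\Phi_i-\Phi_j)$, so that the contraction norm equals $\sigma_t^8 n^{-4}\,\mathrm{Tr}(G^4)=16\sigma_t^8 n^{-4}\big(\sum_i f''(\Phi_i)^4+\sum_{i<j}(\Delta f'/\Delta\Phi)^4\big)$; your identity is dimensionally and algebraically sound (I verified the eigenvalue claims and the $\sigma_t^8$ factor), and it is arguably cleaner than the paper's route --- indeed the paper's intermediate display \eqref{ineq:D2XD2xeq} as printed is garbled (the square of the inner $p\leq q$ sum is missing), while your exact trace formula sidesteps that entirely and delivers the same $O(n^{-2})$ rate, using the fourth-power extension of \eqref{ineq:lemmawignerbound}--\eqref{ineq:lemmawignerbound2} that you correctly flag as needed. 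You also make explicit a point the paper glosses: for the stable part, the asymptotic cross-covariances $\E[Z_f^{(n)}(t)X(h_l)]$ must vanish. Your heuristic (``an $O(1)$ norm spread over $\Theta(n)$ coordinates'') does not by itself yield decay --- Cauchy--Schwarz against a single fixed coordinate only gives $O(1)$ --- but it is easily repaired: conjugating $Y^{(n)}(t)$ by the diagonal sign-flip at index $h$ shows $\E[\Psi_{k,h}[f'](Y^{(n)}(t))]=0$ for $k\neq h$, and permutation invariance gives $\E[\Psi_{k,k}[f'](Y^{(n)}(t))]=\tfrac{\sqrt{2}}{n}\E[\sum_i f'(\Phi_i(Y^{(n)}(t)))]=O(1)$, whence the pairing is $O(n^{-1/2})$ as you claimed. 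This is a fixable detail rather than a gap, and the paper itself omits this verification.
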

\begin{proof} Let $T>0$ be fixed and  denote by $\Cc[0,T]$  the set of continuous functions in $[0,T]$. Let us consider a function $g:(\R^{r})^\ell\rightarrow\R$, as well as a bounded $\Fc$-measurable random variable $M$. We first show that 
\begin{align}\label{eq:stablegvectF}
\lim_{n\rightarrow\infty}\E\left[g(Z_{F}^{(n)}(t_1),\dots, Z_{F}^{(n)}(t_\ell))M\right]
  =\E\left[g(\Lambda_{F}^{n}(t_1),\dots, \Lambda_{F}(t_\ell))\right]\E\left[M\right],
\end{align}
for all $t_1,\dots, t_{\ell}\geq0$, $j\in\N$. Since $M$  is $\Fc$-measurable and bounded, there exists a sequence of natural numbers $\{l_{m}\ ;\ m \geq 1\}$, as well as a collection of continuous and bounded functions $h_{m}:\R^{l_m d(l_m)}\rightarrow\R$ and random variables of the form 
$$M_m=h_m\Big(X_{i,j}(s_{1}^{m}),\dots,X_{i,j}(s_{l_m}^{m});1\leq i\leq j\leq l_{m}\Big),$$
with $s_{1}^m,\dots,s_{l_m}^m>0$, such that $M_{m}\stackrel{{\rm L}^{2}(\Omega)}{\longrightarrow}M$ as $m\rightarrow\infty$. Hence, by applying an approximation argument, we deduce that it suffices to show relation \eqref{eq:stablegvectF} for $M$ of the form $M=h(\eta),$ where $h:\R^{ld(L)}\rightarrow\R$, and 
$$\eta=\Big(X_{i,j}(s_{1}),\dots, X_{i,j}(s_{l}); 1\leq i\leq j\leq l \Big),$$
with $l\in\N$ and $s_{1},\dots,s_{l}>0$. Since $\Lambda_{F}$ is independent of $\Fc$, the problem is then reduced to show that
\begin{align}\label{eq:stableconv}
(Z_{F}^{(n)}(t_{1}),\dots, Z_{F}^{(n)}(t_{\ell}),\eta)
  &\stackrel{(d)}{\longrightarrow}(\Lambda_{F}(t_1),\dots, \Lambda_{F}(t_{\ell}),\eta).
\end{align}
To show the convergence \eqref{eq:stableconv}, it suffices to verify the conditions of Theorem \ref{thm:CLTWienerchaos}.

 Condition (i) follows directly from Theorem \ref{theorem:covariance}. In order to prove condition (ii), notice that by Lemma \ref{thm:Malliavinderivative}, for every $t\geq 0$ and $f\in\Pc$ we have that 
 \begin{equation}\label{DZ_f}
 D Z_{f}^{(n)}(t)=\Big\{n^{-\frac{1}{2}}\Psi_{k,h}[f^{\prime}](Y^{(n)}(t))\Indi{[0,t]}; 1\leq k,h\leq n\Big\},
 \end{equation} where 
$\Psi_{k,h}[f^{\prime}]$ is defined by \eqref{eq:psik1def}. 
Hence, by \eqref{eq:isimetrypsik1}, 
\begin{align*}
\Norm{D Z_{f}^{(n)}(t)}_{\Hg^{\otimes 	d(n)}}^2
  &=\frac{2\sigma_t^2}{n}\sum_{i=1}^{n}f^{\prime}(\Phi_{i}(Y^{(n)}(t)))^2.
\end{align*}
Therefore, using Lemma \ref{lem:Wignernorm}, we deduce that there exists a constant $C>0$, independent of $n$, such that 
\begin{align*}
\Norm{D Z_{f}^{(n)}(t)}_{{\rm L}^{4}(\Omega;\Hg^{\otimes 	d(n)})}^4
  &=\Norm{\Norm{D Z_{f}^{(n)}(t)}_{\Hg^{\otimes 	d(n)}}^2}_{{\rm L}^{2}(\Omega)}^2
  \leq \bigg\|\frac{2\sigma_t^2}{n}\sum_{i=1}^{n}f^{\prime}(\Phi_{i}(Y^{(n)}(t)))^2\bigg\|_{{\rm L}^{2}(\Omega)}^2\leq C,
\end{align*}
which implies condition (ii).

In order to deduce condition (iii), we first use Lemma \ref{thm:Malliavinderivative} to write 
$$D^2Z_{f}^{(n)}(t)=\left\{\frac{1}{n}\Big(\Psi_{k,h}^{p,q}[f^{\prime\prime}](Y^{(n)}(t))+\Pi_{k,h}^{p,q}[f^{\prime}](Y^{(n)}(t
))\Big)\Indi{[0,t]}^{\otimes 2};\ 1\leq k,h,p,q\leq n\right\},$$ 

where $\Psi_{k,h}^{p,q}[f^{\prime\prime}]$ and $\Pi_{k,h}^{p,q}[f^{\prime}]$ are given by \eqref{eq:psik2def} and \eqref{eq:Pikhpqdef}.
Therefore, by \eqref{eq:isimetrypsik2} and \eqref{eq:fphisum24}
\begin{multline}\label{ineq:D2XD2xeq}
\left\|D^2Z_{f}^{(n)}(t)\otimes_1 D^2Z_{f}^{(n)}(t)\right\|_{(\Hg^{d(n)})^{\otimes 2}}^2\\
  \begin{aligned}
	&=\frac{\sigma_{t}^4}{n^4}\sum_{\substack{k_1\leq h_1\\k_2\leq h_2}}\sum_{p\leq q}\big(\Psi_{k_1,h_1}^{p,q}[f^{\prime\prime}]\Psi_{k_2,h_2}^{p,q}[f^{\prime\prime}]+\Pi_{k_1,h_1}^{p,q}[f^{\prime}]\Psi_{k_2,h_2}^{p,q}[f^{\prime\prime}]\big)(Y^{(n)}(t))\\
	&+\frac{\sigma_{t}^4}{n^4}\sum_{\substack{k_1\leq h_1\\k_2\leq h_2}}\sum_{p\leq q}\big(\Psi_{k_1,h_1}^{p,q}[f^{\prime\prime}]\Pi_{k_2,h_2}^{p,q}[f^{\prime}]+\Pi_{k_1,h_1}^{p,q}[f^{\prime}]\Pi_{k_2,h_2}^{p,q}[f^{\prime}]\big)(Y^{(n)}(t)).
\end{aligned}
\end{multline}
By applying the Cauchy-Schwarz inequality in \eqref{ineq:D2XD2xeq}, it is straightforward to see
\begin{multline*}
\left\|D^2Z_{f}^{(n)}(t)\otimes_1 D^2Z_{f}^{(n)}(t)\right\|_{(\Hg^{d(n)})^{\otimes 2}}^2\\
  \begin{aligned}
	&\leq\frac{\sigma_{t}^4}{n^4}\sum_{\substack{k_1\leq h_1\\k_2\leq h_2}}\sum_{p\leq q}\big(\Psi_{k_1,h_1}^{p,q}[f^{\prime\prime}]^2+\Psi_{k_2,h_2}^{p,q}[f^{\prime\prime}]^2+\Pi_{k_1,h_1}^{p,q}[f^{\prime}]^2+\Pi_{k_2,h_2}^{p,q}[f^{\prime}]^2\big)(Y^{(n)}(t)),
\end{aligned}
\end{multline*}
which in turn implies that
\begin{align*}
\left\|D^2Z_{f}^{(n)}(t)\otimes_1 D^2Z_{f}^{(n)}(t)\right\|_{(\Hg^{d(n)})^{\otimes 2}}^2
   &\leq \frac{2\sigma_t^4}{n^4}\sum_{k\leq h}\sum_{p\leq q}\big(\Psi_{k,h}^{p,q}[f^{\prime\prime}]^2+\Pi_{k,h}^{p,q}[f^{\prime}]^2\big)(Y^{(n)}(t)).
\end{align*}
Relations \eqref{eq:isimetrypsik2} and \eqref{eq:fphisum24} allow us to write the previous inequality as
\[
\begin{split}
\left\|D^2Z_{f}^{(n)}(t)\otimes_1 D^2Z_{f}^{(n)}(t)\right\|_{(\Hg^{d(n)})^{\otimes 2}}^2
  &\leq \frac{8\sigma_t^4}{n^4}\sum_{i=1}^{n}f^{\prime\prime}(\Phi_{i}(Y^{(n)}(t)))^2\\
  &+\frac{4\sigma_t^4}{n^4}\bigg(\sum_{j\neq i}\frac{f^{\prime}(\Phi_{i}(Y^{(n)}(t)))-f^{\prime}(\Phi_{j}(Y^{(n)}(t)))}{\Phi_{i}(Y^{(n)}(t))-\Phi_{j}(Y^{(n)}(t))}\bigg)^2.
\end{split}
\]
Using the previous inequality, we deduce that if $\{A(n); n\geq 1\}$ is a standard Gaussian orthogonal ensemble, then
\begin{align*}
\left\|D^2Z_{f}^{(n)}(t)\otimes_1 D^2Z_{f}^{(n)}(t)\right\|_{(\Hg^{d(n)})^{\otimes 2}}^2
  &\leq \frac{8\sigma_t^4}{n^4}\sum_{i=1}^{n}\E\Big[(f^{\prime\prime}(\Phi_{i}(\sigma_tA(n)))^2\Big]\\
	&+\frac{4\sigma_t^4}{n^4}\sum_{j\neq i}\E\left[\left(\frac{f^{\prime}(\Phi_{i}(\sigma_tA(n)))-f^{\prime}(\Phi_{j}(\sigma_tA(n)))} {\Phi_{i}(\sigma_t A(n))-\Phi_{j}(\sigma_t A(n))}\right)^2\right].
\end{align*}
 Thus, by Lemma \ref{lem:Wignernorm} we get that 
\begin{align*}
\left\|D^2Z_{f}^{(n)}(t)\otimes_1 D^2Z_{f}^{(n)}(t)\right\|_{(\Hg^{d(n)})^{\otimes 2}}^2
  &\leq \frac{C}{n^2},
\end{align*}
for some constant $C>0$ independent of $n$. Thus Proposition \ref{Prop:main} follows directly from Theorem \ref{thm:CLTWienerchaos}.
\end{proof}
\section{Tightness property for $Z_{f}^{(n)}(t)$}\label{sec:tight}
Recall that the family of test functions $\mathcal{P}$ consists of functions with derivatives of order fourth with polynomial growth, see \eqref{eq:Pmathcaldef}. The goal of this section is to prove the following result
\begin{Lemma}
If $f\in \Pc$, then the process $\{Z_{f}^{(n)} ;\ n\geq1\}$, with  $Z_{f}^{(n)}:=(Z_{f}^{(n)}(t) ; t\geq0)$, is tight.
\end{Lemma}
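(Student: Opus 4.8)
The plan is to verify the Kolmogorov--Chentsov-type tightness criterion of Billingsley (Theorem~12.3 in \cite{billingsley2013convergence}): fixing $T>0$, it suffices to exhibit an even integer $p\ge 2$, an exponent $\gamma>1$ and a constant $C>0$ independent of $n$ such that
\[
\E\Big[\big|Z_f^{(n)}(t)-Z_f^{(n)}(s)\big|^p\Big]\le C|t-s|^{\gamma},\qquad 0\le s\le t\le T,
\]
together with tightness of the initial values $\{Z_f^{(n)}(0);n\ge1\}$, which is immediate from the convergence of the one-dimensional marginals already established in Proposition~\ref{Prop:main}. The whole difficulty is thus concentrated in the moment bound for the increments, uniformly in $n$.

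To produce this bound I would use the Skorohod differential representation \eqref{eq:Zfstochprelim} obtained from \cite{jaramillo2018convergence}, whose key virtue is that it localises the increment on the interval $[s,t]$:
\[
Z_f^{(n)}(t)-Z_f^{(n)}(s)=\delta^{*}\big(\Indi{[s,t]}(\cdot)h(Y^{(n)}(\cdot))\big)+\int_s^t g(Y^{(n)}(u))\,\ud u,
\]
with $h,g$ explicit functions of the eigenvalues built from $f'$, $f''$ and the divided differences of $f'$. I would then estimate the two contributions separately. The drift term is the easy one: by Minkowski's integral inequality together with the uniform-in-$n$ ${\rm L}^p$-bounds on $g(Y^{(n)}(u))$ provided by Lemma~\ref{lem:Wignernorm} (applied to $f'$, $f''$ and the divided-difference functional via \eqref{ineq:lemmawignerbound} and \eqref{ineq:lemmawignerbound2}), one gets $\big\|\int_s^t g(Y^{(n)}(u))\,\ud u\big\|_{{\rm L}^p}\le C|t-s|$, hence a contribution of order $|t-s|^p$ to the $p$-th moment.

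The core of the argument is the Skorohod integral term. Here I would invoke the Meyer-type inequalities \eqref{eq:Meyer}--\eqref{eq:centeredLpbound} to bound $\big\|\delta^{*}(\Indi{[s,t]}h(Y^{(n)}))\big\|_{{\rm L}^p}$ by Malliavin norms of the integrand, the crucial observation being that every such norm carries the factor $\Indi{[s,t]}$ through the extended inner product \eqref{def:extendedIP}. Controlling quantities of the form $\langle\Indi{[s,t]}\varphi,\Indi{[s,t]}\psi\rangle_{\Hg}$ by means of hypotheses \textbf{(H1)} and \textbf{(H2)} — these decay like $|t-s|^{2\kappa}$ for some $\kappa>0$ that may be small when the covariance is as rough as that of fractional Brownian motion — and combining this with the spectral bounds of Lemma~\ref{lem:Wignernorm} for the random coefficients, this should yield $\big\|\delta^{*}(\Indi{[s,t]}h(Y^{(n)}))\big\|_{{\rm L}^p}\le C|t-s|^{\kappa}$, i.e. a contribution of order $|t-s|^{p\kappa}$. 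Together with the drift estimate this gives $\E\big[|Z_f^{(n)}(t)-Z_f^{(n)}(s)|^p\big]\le C\big(|t-s|^{p\kappa}+|t-s|^{p}\big)$.

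The main obstacle is precisely that $\kappa$ can be arbitrarily small, so no fixed moment order will do; the resolution exploits that $f\in\Pc$ has derivatives of polynomial growth, so that Lemma~\ref{lem:Wignernorm} furnishes the required ${\rm L}^p$-estimates for \emph{every} $p$. One then chooses the even integer $p$ large enough that $p\kappa>1$, which makes the exponent $\gamma=\min(p\kappa,p)$ strictly greater than $1$ and completes the verification of Billingsley's criterion. The delicate point throughout is to keep all constants uniform in $n$, which is exactly what the scale-invariant formulation of Lemma~\ref{lem:Wignernorm} and the dimension-free Meyer inequalities are designed to guarantee.
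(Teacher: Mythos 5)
Your overall skeleton --- Billingsley's moment criterion, the Skorohod representation from \cite{jaramillo2018convergence}, moments of arbitrarily high order to beat a small H\"older exponent, and Lemma~\ref{lem:Wignernorm} for $n$-uniform spectral bounds --- coincides with the paper's, but two of your key steps have genuine gaps.

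First, the drift term is not ``the easy one''. Up to centering, the drift integrand is $\frac{1}{2n}\sum_{1\leq i_1,i_2\leq n}f''\big(\theta\Phi_{i_1}(Y^{(n)}(w))+(1-\theta)\Phi_{i_2}(Y^{(n)}(w))\big)v_w'$ (see \eqref{eq:Gdef}): a double sum of $n^2$ terms normalized only by $1/n$. Without centering this is of order $n$ in ${\rm L}^p(\Omega)$, and Lemma~\ref{lem:Wignernorm} together with Minkowski's inequality can only reproduce that $O(n)$ bound --- it carries no fluctuation information. The uniform-in-$n$ estimate must exploit the subtracted expectation, and this is where the paper does real work: it applies the Meyer-type bound \eqref{eq:centeredLpbound} to the centered increment $G^{(n)}_{f,t}-G^{(n)}_{f,s}$, computes $DG^{(n)}_{f,\cdot}$ and $D^2G^{(n)}_{f,\cdot}$ explicitly (the second derivatives of the $\Phi_i$ via the Hadamard formulas of Lemma~\ref{lem:app}), and uses the orthogonality identities \eqref{eq:D1PhiUpsum} and \eqref{eq:VandW} to collapse the four-fold index sums by a power of $n$. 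In addition, your claimed rate $|t-s|$ for the drift is wrong in general: under \textbf{(H2)} only $w^{1-\varepsilon}v_w'$ is bounded, so $v_w'$ may blow up at the origin (e.g.\ fractional Brownian motion with $H<1/2$), and one only obtains $\int_s^t|v_w'|\,\ud w\leq C(t^\varepsilon-s^\varepsilon)\leq C|t-s|^{\varepsilon}$ --- still sufficient once the moment order is large, but the Lipschitz claim fails.

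Second, your treatment of the Skorohod term assumes a Meyer inequality for the extended divergence $\delta^{*}$ that the paper neither states nor has available. Inequality \eqref{eq:Meyer} bounds $\delta(D\mathcal{L}^{-1}F)$ for centered $F$, and \eqref{eq:centeredLpbound} bounds centered random variables; neither controls $\|\delta^{*}(u)\|_{{\rm L}^p(\Omega)}$ for a general integrand $u$, and for rough covariances --- the very situation $\delta^{*}$ was introduced to handle --- the integrand $\Indi{(s,t]}h(Y^{(n)})$ need not belong to the Sobolev-type domains over $\Hg$ that a Meyer inequality would require. Note also that your heuristic quantity $\Ip{\Indi{[s,t]}\varphi,\Indi{[s,t]}\psi}_{\Hg}$ with both entries outside $\Ef$ is not even defined in this framework, cf.\ \eqref{def:extendedIP}. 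The paper's actual device, advertised as the new ingredient, is a duality recursion: from \eqref{eq:dualitydeltaD} one gets $\E[(\delta^{*}(\Delta u^{(n)}))^{2\gamma}]=(2\gamma-1)\E[(\delta^{*}(\Delta u^{(n)}))^{2\gamma-2}\Ip{D\delta^{*}(\Delta u^{(n)}),\Delta u^{(n)}}_{\Hg^{d(n)}}]$, hence \eqref{ineq:deltamallmat} by H\"older, and then the equation is fed back into itself: $\delta^{*}(\Delta u^{(n)})=\Delta Z^{(n)}-\Delta G^{(n)}$ by \eqref{eq:Xdecomptight}, so that $D\delta^{*}(\Delta u^{(n)})$ is expressed through the explicitly computable $DZ^{(n)}_f$ and $DG^{(n)}_{f,\cdot}$. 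The smallness $|t-s|^{1/\beta}$ then comes from $\int_s^t\big|\frac{\partial R}{\partial x}(x,w)\big|\,\ud x$ via H\"older and \textbf{(H1)}, as in \eqref{eq:DXFwu}. Your proposal skips precisely this mechanism, and without it the ${\rm L}^p$ bound on the Skorohod increment does not follow.
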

\begin{proof}
Before delving further into the details of the proof, we would like to briefly mention the overall ideas we present. The foundations of the argument rely on the so called Billingsley criterion, in which the sequential compactness is proved by controlling the size of moments of the increments  of $Z_{f}^{(n)}(t)$. This will be achieved by decomposing  $Z_{f}^{(n)}(t)$ into two pieces: a generalized Skorohod integral on the variable $t$ and a Lebesgue integral, taken with respect to $t$ as well. The size of the moments of the Skorohod integral is estimated by integration by parts arguments, while the Lebesgue integral uses Malliavin calculus techniques.\\

In \cite[Lemma~3.1]{jaramillo2018convergence}, it was proved that the random variable $\int f(x)\mu_{t}^{(n)}(\ud x)$ satisfies the following stochastic equation 
\begin{equation}\label{eq:evolution}
\begin{split}
&\int f(x)\mu_{t}^{(n)}(\ud x)
  =f(0)+\frac{1}{n^{\frac{3}{2}}}\sum_{i=1}^{n}\sum_{k\leq h}\int_{0}^{t}f^{\prime}(\Phi_{i}(Y^{(n)}(w)))\frac{\partial \Phi_{i}}{\partial y_{k,l}}(Y^{(n)}(w))\delta X_{k,h}(w)\\
	&\hspace{.5cm}+\frac{1}{2}\int_{0}^{t}\int_{\R^2}\Indi{\{x\neq y\}}\frac{f^{\prime}(x)-f^{\prime}(y)}{x-y}\mu_{w}^{(n)}(\ud x)\mu_{w}^{(n)}(\ud y)v_{w}^{\prime}\ud w+\frac{1}{2n^{2}}\sum_{i=1}^{n}\int_{0}^{t}f^{\prime\prime}(\Phi_{i}(Y^{(n)}(w)))v_{w}^{\prime}\ud w,
\end{split}
\end{equation}
where $v_{w}:=\sigma_{w}^2$ and the Skorohod integration is understood in the generalized sense. Recalling the definitions of  $Z_{f}^{(n)}$ and $\Psi_{k,h}$, which are given in \eqref{eq:Ztfn} and \eqref{eq:psik1def} respectively, using the previous equation, as well as the identity 
\begin{align}\label{eq:differencequotientint}
\frac{f^\prime(x)-f^{\prime}(y)}{x-y}=\int_{0}^{1}f^{\prime\prime}(\theta x+(1-\theta)y)\ud\theta,
\end{align} 
 we  deduce 
\begin{align}\label{eq:Xdecomptight}
Z_{f}^{(n)}(t)
  &=\delta^*(u_{f,t}^{(n)})+G_{f,t}^{(n)},
	\end{align}
where 
\begin{align}
G_{f,t}^{(n)}
  &:=\frac{1}{2n}\int_{0}^{t}\sum_{1\leq i_{1},i_{2}\leq n}\int_{0}^1\bigg(f^{\prime\prime}\Big(\theta\Phi_{i_{1}}(Y^{(n)}(w))+(1-\theta)\Phi_{i_{2}}(Y^{(n)}(w))\Big)\nonumber\\
	&\ \ \ \ \ \ \ \ \ \ \ \ \ \ \ \ \ -\E\left[f^{\prime\prime}\Big(\theta\Phi_{i_{1}}(Y^{(n)}(w))+(1-\theta)\Phi_{i_{2}}(Y^{(n)}(w))\Big)\right]\bigg)v_{w}^{\prime}\ud\theta \ud w,\label{eq:Gdef}
\end{align}
and $u_{f,t}^{(n)}=(u_{f,t}^{(n)}(w), w\geq0)\in {\rm L}^{\beta}([0,T],\R^{d(n)})$ is the $\R^{d(n)}$-valued process defined by
\[
u_{f,t}^{(n)}(w):=\left\{n^{-\frac{1}{2}}\Psi_{k,h}[f^{\prime}](Y^{(n)}(w)); 1\leq k\leq h\leq  n\right\},\quad \mathrm{for} \quad w\in[0,t],  
\]
and $u_{f,t}^{(n)}(w):=0$ otherwise. In order to prove our result, it suffices to show that for all $T>0$, the processes $(\delta^*(u_{f, t}^{(n)}), t\geq 0)$ and $ (G_{f, t}^{(n)}, t\geq 0)$ are tight in $\Cc[0,T]$. Since $\delta^*(u_{f, 0}^{(n)})=G_{f, 0}^{(n)}=0$, by  Billingsley´s criterion \cite[Theorem~12.3]{billingsley2013convergence}, it is enough to show that for $i=1,2$ there exist  $C>0$, such that for all $\gamma>1$,
\begin{align}
\E\left[\Abs{\delta^*(u_{f,t}^{(n)})-\delta^*(u_{f,s}^{(n)})}^{2\gamma}\right]
 &\leq C\Abs{t-s}^{\frac{\gamma}{\beta}}\label{eq:BillingsleyZ}\\
\E\left[\Abs{G_{f,t}^{(n)}-G_{f,s}^{(n)}}^{2\gamma}\right]
 &\leq C\Abs{t-s}^{2\varepsilon\gamma}\label{eq:BillingsleyG}
\end{align}
where $\beta=\frac{\alpha}{\alpha-1}$, for $\alpha$ given as in  \textbf{(H1)} and $\varepsilon$ is as in  \textbf{(H2)}.

For simplicity on exposition, we divide the rest proof in two steps which correspond to each of the previous inequalities.

\textbf{Inequality \eqref{eq:BillingsleyZ}.} For $s,t>0$, $n\in\N$ and $f\in \mathcal{P}$ fixed, we introduce  
\begin{equation}\label{eq:XGZKdef}
\Delta Z^{(n)} :=Z_{f}^{(n)}(t)-Z_{f}^{(n)}(s),\quad
\Delta u^{(n)}
  :=u_{f,t}^{(n)}-u_{f,s}^{(n)}\quad\textrm{and}\quad
\Delta G^{(n)}
  :=G_{f,t}^{(n)}-G_{f,s}^{(n)}.
\end{equation}
In particular,  we observe
\begin{align}
\Delta u^{(n)}(y)
  =\left\{n^{-\frac{1}{2}}\Psi_{k,h}[f^{\prime}](Y^{(n)}(y)); 1\leq k\leq h\leq  n\right\},\quad \mathrm{for} \quad y\in (s,t],  \label{eq:deltaudef}
\end{align}
and $\Delta u^{(n)}(y)=0$ otherwise. Our goal is to find an upper bound for $\E\left[(\delta^*(\Delta u^{(n)}))^{2\gamma}\right]$ for every $\gamma>\beta$. By H\"older inequality, we deduce
\[
\begin{split}
\E\left[(\delta^*(\Delta u^{(n)}))^{2\gamma}\right]&=\E\left[(\delta^*(\Delta u^{(n)}))^{2\gamma-1}\delta^*(\Delta u^{(n)})\right]\\
	&=(2\gamma-1)\E\left[(\delta^*(\Delta u^{(n)}))^{2\gamma-2}\Ip{D\delta^*(\Delta u^{(n)}),\Delta u^{(n)}}_{\Hg^{d(n)}}\right]\\
  &\leq (2\gamma-1)\E\left[(\delta^*(\Delta u^{(n)}))^{2\gamma}\right]^{1-\frac{1}{\gamma}}\left\|\Ip{D\delta^*(\Delta u^{(n)}),\Delta u^{(n)}}_{\Hg^{d(n)}}\right\|_{{\rm L}^{\gamma}(\Omega)}.
\end{split}
\]
where the second equality follows from \eqref{eq:dualitydeltaD}.  From the previous identity, it follows  
\begin{align}\label{ineq:deltamallmat}
\left\|(\delta^*(\Delta u^{(n)}))^{2}\right\|_{{\rm L}^{\gamma}(\Omega)}
  &\leq (2\gamma-1)\Norm{\Ip{D\delta^*(\Delta u^{(n)}),\Delta u^{(n)}}_{\Hg^{d(n)}}}_{{\rm L}^{\gamma}(\Omega)},
\end{align}
By \eqref{eq:Xdecomptight}, we deduce  $\delta^*(\Delta u^{(n)})=\Delta Z^{(n)}-\Delta G^{(n)}$. Hence, using \eqref{ineq:deltamallmat}, we get 

\begin{equation}\label{ineq:deltasupuXGK}
\begin{split}
\left\|(\delta^*(\Delta u^{(n)}))^2\right\|_{{\rm L}^{\gamma}(\Omega)}
  &\leq 2(2\gamma-1)\Bigg(\sup_{w\in[0,T]}\Norm{\Ip{DZ_{f}^{(n)}(w),\Delta u^{(n)}}_{\Hg^{d(n)}}}_{{\rm L}^{\gamma}(\Omega)}\\
	&\hspace{4cm} +\sup_{w\in[0,T]}\Norm{\Ip{DG_{f,w}^{(n)},\Delta u^{(n)}}_{\Hg^{d(n)}}}_{{\rm L}^{\gamma}(\Omega)}\bigg).
\end{split}
\end{equation}
Thus, it is enough to upper bound the two terms appearing in the right-hand side of the previous inequality. To upper bound the first term, we  recall the definition of $DZ_{f}^{(n)}$ in \eqref{DZ_f} and observe from  \eqref{eq:deltaudef}, the following
\[
\begin{split}
&\Norm{\Ip{DZ_{f}^{(n)}(w),\Delta u^{(n)}}_{\Hg^{d(n)}}}_{{\rm L}^{\gamma}(\Omega)}\\
  &\hspace{2cm}\leq \bigg\|\frac{1}{n}\sum_{k\leq h}\int_{s}^{t}\Psi_{k,h}[f^{\prime}](Y^{(n)}(w))\Psi_{k,h}[f^{\prime}](Y^{(n)}(x))\frac{\partial R}{\partial x}(x,w)\ud x\bigg\|_{{\rm L}^{\gamma}(\Omega)}\\
	&\hspace{2cm}\leq \frac{1}{n}\int_{s}^{t}\bigg\|\sum_{k\leq h}\Psi_{k,h}[f^{\prime}](Y^{(n)}(w))\Psi_{k,h}[f^{\prime}](Y^{(n)}(x))\bigg\|_{{\rm L}^{\gamma}(\Omega)}\Abs{\frac{\partial R}{\partial x}(x,w)}\ud x.
\end{split}
\]
Let $A(n)$ be a standard Gaussian orthogonal ensemble and define $M_T:=\sup_{0\leq t\leq T}\sigma_{t}$. Using the Cauchy-Schwarz inequality (twice), we get that 
\begin{multline*}
\left\|\sum_{k\leq h}\Psi_{k,h}[f^{\prime}](Y^{(n)}(w))\Psi_{k,h}[f^{\prime}](Y^{(n)}(x))\right\|_{{\rm L}^{\gamma}(\Omega)}\\
\begin{aligned}
  &\leq\left\|\sum_{k\leq h}\left(\Psi_{k,h}[f^{\prime}](Y^{(n)}(w))\right)^2\right\|_{{\rm L}^{\gamma}(\Omega)}^{\frac{1}{2}}
	\left\|\sum_{k\leq h}\Big(\Psi_{k,h}[f^{\prime}](Y^{(n)}(x))\Big)^2\right\|_{{\rm L}^{\gamma}(\Omega)}^{\frac{1}{2}}\\
	&\leq \sup_{0\leq z\leq M_T}\left\|\sum_{k\leq h}\Big(\Psi_{k,h}[f^{\prime}](zA(n))\Big)^2\right\|_{{\rm L}^{\gamma}(\Omega)}\\
&	=2\sup_{0\leq z\leq M_T}\left\|\sum_{i=1}^n\Big(f^{\prime}(\Phi_{i}(zA(n)))\Big)^2\right\|_{{\rm L}^{\gamma}(\Omega)},
\end{aligned}
\end{multline*}
where the last identity follows from \eqref{eq:isimetrypsik1}. Using the previous inequality, as well as Lemma \ref{lem:Wignernorm}, we deduce that there exists a constant $C>0$, only depending on $f$, $\gamma,\sigma$ and $T$, such that 
\begin{align}\label{eq:DXFwuaux}
\Norm{\Ip{DZ_{f}^{(n)}(w),u}_{\Hg^{d(n)}}}_{{\rm L}^{\gamma}(\Omega)}
  &\leq C\int_{s}^{t}\Abs{\frac{\partial R}{\partial x}(x,w)}{\rm d} x.
\end{align}
Hence, by H\"older inequality and condition \textbf{(H1)}, we get 
\begin{equation}\label{eq:DXFwu}
\begin{split}
\int_{s}^{t}\Abs{\frac{\partial R}{\partial x}(x,w)}{\rm d} x
	&\leq C\Abs{t-s}^{\frac{1}{\beta}}\bigg(\int_{s}^{t}\Abs{\frac{\partial R}{\partial x}(x,w)}^{\alpha}{\rm d} x\bigg)^{\frac{1}{\alpha}}\\
	&\leq  C\Abs{t-s}^{\frac{1}{\beta}}\bigg(\sup_{w\in[0,T]}\int_{0}^{T}\Abs{\frac{\partial R}{\partial x}(x,w)}^{\alpha}{\rm d} x\bigg)^{\frac{1}{\alpha}}.
\end{split}
\end{equation}
Combining \eqref{eq:DXFwuaux} and \eqref{eq:DXFwu}, we deduce that there exists a constant $C_1>0$, independent of $s,t,w$ and $n$, such that 
\begin{align}\label{ineq:boundforXfvsu}
\Norm{\Ip{DZ_{f}^{(n)}(w),u}_{\Hg^{d(n)}}}_{{\rm L}^{\gamma}(\Omega)}
	&\leq  C_1\Abs{t-s}^{\frac{1}{\beta}},
\end{align}
which gives the desired bound for the first term in \eqref{ineq:deltasupuXGK}.

In order to upper bound $\Norm{\Ip{DG_{f,w}^{(n)},\Delta u}_{\Hg^{d(n)}}}_{{\rm L}^{\gamma}(\Omega)}$ we follow a similar approach as above. To simplify the notation, we introduce
\[
\mathfrak{F}_{i,j}(n,\theta, w):= f^{\prime\prime\prime}\Big(\theta\Phi_{i}(Y^{(n)}(w))+(1-\theta)\Phi_{j}(Y^{(n)}(w))\Big),
\]
and
\[
\mathfrak{I}_{k,h}^{i,j}(n,\theta,w):=\theta\frac{\partial\Phi_{i}}{\partial x_{k,h}}(Y^{(n)}(w))+(1-\theta)\frac{\partial\Phi_{j}}{\partial x_{k,h}}(Y^{(n)}(w)).
\]
Next, we observe that $DG_{f,w}^{(n)}=\{v_{k,h}; 1\leq k\leq h\leq n\}$ with
\begin{equation}\label{eq:DGform}
\begin{split}
v_{k,h}(\cdot)
  &=\frac{1}{2n^{\frac{3}{2}}}\int_{0}^{w}\int_{0}^1\sum_{1\leq i_{1}, i_{2}\leq n}
  \mathfrak{F}_{i_1,i_2}(n, \theta, r)\mathfrak{I}_{k,h}^{i_1,i_2}(n,\theta, r)
  v_{r}^{\prime}\Indi{[0,r]}(\cdot){\rm  d}\theta  {\rm d}r.
\end{split}
\end{equation}
Thus, by \eqref{eq:deltaudef}, we deduce
\begin{equation}
\begin{split}\label{eq:IpDGFaux1}
&\left|\Ip{DG_{f,w}^{(n)},\Delta u}_{\Hg^{d(n)}}\right|\\
  &\hspace{1cm}\leq \bigg|\frac{1}{2n^{2}}\int_{0}^{w}\int_{0}^1\int_{s}^{t}\sum_{1\leq i_{1}, i_{2}\leq n}\mathfrak{F}_{i_1,i_2}(n, \theta, r) \sum_{k\leq h}\mathfrak{I}_{k,h}^{i_1,i_2}(n,\theta, r)\\
	&\hspace{7.5cm}\times\Psi_{k,h}[f^{\prime}](Y^{(n)}(x))
	\frac{\partial R}{\partial x}(x,r)v_r^{\prime}\ud x\ud\theta \ud r\bigg|.
\end{split}
\end{equation}
Next we find suitable bounds for the summands appearing in the right hand side. To this end, we define 
\begin{align*}
\mathcal{T}^{(n)}_{k,h}(\theta,r)
     &:=\sum_{1\leq i_{1}, i_{2}\leq n}\mathfrak{F}_{i_1,i_2}(n, \theta, r) \mathfrak{I}_{k,h}^{i_1,i_2}(n,\theta, r),\quad \textrm{and}\quad\mathcal{S}^{(n)}_{k,h}(x):=\Psi_{k,h}[f^{\prime}](Y^{(n)}(x)),
\end{align*}
so that  inequality \eqref{eq:IpDGFaux1} can be rewritten as follows
\begin{align}\label{eq:IpDGFaux2}
\left|\Ip{DG_{f,w}^{(n)},\Delta u}_{\Hg^{d(n)}}\right|
  &\leq \frac{1}{2n^{2}}\int_{0}^{w}\int_{0}^1\int_{s}^{t}\bigg|\sum_{k\leq h}\mathcal{T}^{(n)}_{k,h}(\theta,r)\mathcal{S}^{(n)}_{k,h}(x)\frac{\partial R}{\partial x}(x,r)v_r^{\prime}\bigg|\ud x\ud\theta \ud r.
\end{align}
Using Minkowski  and Cauchy  inequalities in \eqref{eq:IpDGFaux1}, we deduce
\begin{multline}\label{eq:IpDGFaux12}
\left\|\Ip{DG_{f,w}^{(n)},\Delta u}_{\Hg^{d(n)}}\right\|_{{\rm L}^{\gamma}(\Omega)}\\
\begin{aligned}
  &\leq \frac{1}{2n^{2}}\int_{0}^{w}\int_{0}^1\int_{s}^{t}\bigg\|\bigg(\sum_{k\leq h}\mathcal{T}^{(n)}_{k,h}(\theta,r)^2\bigg)^{\frac{1}{2}}\bigg(\sum_{k\leq h}\mathcal{S}^{(n)}_{k,h}(x)\bigg)^{\frac{1}{2}}\bigg\|_{{\rm L}^{\gamma}(\Omega)}\bigg|\frac{\partial R}{\partial x}(x,r)v_r^{\prime}\bigg|\ud x\ud\theta \ud r\\
  &\leq \frac{1}{2n^{2}}\int_{0}^{w}\int_{0}^1\int_{s}^{t} \bigg\|\sum_{k\leq h}\mathcal{T}^{(n)}_{k,h}(\theta,r)^2\bigg\|_{{\rm L}^{\gamma}(\Omega)}^{\frac{1}{2}}\bigg\|\sum_{k\leq h}\mathcal{S}^{(n)}_{k,h}(x)^2\bigg\|_{{\rm L}^{\gamma}(\Omega)}^{\frac{1}{2}}\bigg|\frac{\partial R}{\partial x}(x,r)v_r^{\prime}\bigg|\ud x\ud\theta \ud r.
\end{aligned}
\end{multline}
 From identity \eqref{eq:D1PhiUpsum}, it follows that 
\begin{equation}\label{eq:IpDGFauxv2}
\begin{split}
    \sum_{k\leq h}\mathcal{T}^{(n)}_{k,h}(\theta,r)^2
      &=4\sum_{1\leq i_1,i_2,i_3,i_4\leq n}\mathfrak{F}_{i_1,i_2}(n, \theta, r) \mathfrak{F}_{i_3,i_4}(n, \theta, r)\\
      &\hspace{3cm}\times\bigg(\theta^2\delta_{i_1,i_3}+\theta(1-\theta)(\delta_{i_1,i_4}+\delta_{i_2,i_3})+ (1-\theta)^2\delta_{i_2,i_4}\bigg). 
\end{split}
\end{equation}
Since $f\in\Pc$, there exist constants $C_2>0$ and $a\in\N$, such that $|f^{\prime\prime\prime}(x)|\leq  C_2(1+|x|^{2a})$. Applying this inequality in \eqref{eq:IpDGFauxv2}, and using the fact that $0\leq \theta\leq 1$, we get
\begin{equation*}
    \begin{split}
\sum_{k\leq h}\mathcal{T}^{(n)}_{k,h}(\theta,r)^2
  &\leq  C_2\sum_{1\leq i_{1} ,i_{2},i_3,i_4\leq n}\bigg(1+\sum_{\ell=1}^4\Big(\Phi_{i_{\ell}}(Y^{(n)}(r))\Big)^{4a}\bigg)\bigg( \delta_{i_1,i_3}+ \delta_{i_1,i_4}+ \delta_{i_2,i_3}+  \delta_{i_2,i_4}\bigg)\\
  &\leq  8\times C_2\sum_{1\leq i_{1} ,i_{2},i_3\leq n}\bigg(1+\sum_{\ell=1}^3\Big(\Phi_{i_{\ell}}(Y^{(n)}(r))\Big)^{4a}\bigg)\\
  &\le  16\times C_2n^2\sum_{i=1}^n\bigg(1+\Big(\Phi_{i}(Y^{(n)}(r))\Big)^{4a}\bigg).
\end{split}
\end{equation*}
Therefore, by Lemma \ref{lem:Wignernorm},
\begin{align}
\bigg\|\sum_{k\leq h}\mathcal{T}^{(n)}_{k,h}(\theta,r)^2\bigg\|_{{\rm L}^{\gamma}(\Omega)}
  &\leq \tilde{C}_2n^3,\label{eq:IpGdu2case}
\end{align}
for some constant $\tilde{C}_2>0$ independent of $\theta,r$ and $n$.

On the other hand, from the definition of $\Psi_{k,h}$ (see \eqref{eq:psik1def}),  and identity \eqref{eq:D1PhiUpsum}, we get  
\begin{align*}
    \sum_{k\leq h}\mathcal{S}^{(n)}_{k,h}(x)^2
    &=2\sum_{i=1}^nf^{\prime}(\Phi_{i}(Y^{(n)}(x)))^2.
\end{align*}
Similarly as above, applying the polynomial growth property of $f^{\prime}$, combined with Lemma \ref{lem:Wignernorm}, we deduce the existence of a constant $C_3>0$, such that 
\begin{align}\label{eq:IpGdu2case2}
\bigg\|\sum_{k\leq h}\mathcal{S}^{(n)}_{k,h}(x)^2\bigg\|_{{\rm L}^{\gamma}(\Omega)}
  &\leq C_3n.
\end{align}
Next, we use  \eqref{eq:IpDGFaux12}, \eqref{eq:IpGdu2case} and \eqref{eq:IpGdu2case2}, to deduce
\begin{align*}
\Big\|\Ip{DG_{f,w}^{(n)},\Delta u}_{\Hg^{d(n)}}\Big\|_{{\rm L}^{\gamma}(\Omega)}
  &\leq C_4\int_{0}^{w}\int_{s}^{t}\left|\frac{\partial R}{\partial x}(x,r)v_r^{\prime}\right|\ud x \ud r,
\end{align*}
which by \eqref{eq:DXFwu} and hypothesis \textbf{(H2)} , implies that 
\begin{align}\label{ineq:boundforXfvsuc2}
\Big\|\Ip{DG_{f,w}^{(n)},\Delta u}_{\Hg^{d(n)}}\Big\|_{{\rm L}^{\gamma}(\Omega)}
	&\leq  C_5\Abs{t-s}^{\frac{1}{\beta}},
\end{align}
for some constant $C_5>0$, which is  independent of $n$ and $w$. This gives the desired bound for the second term in \eqref{ineq:deltasupuXGK}.

\textbf{Inequality \eqref{eq:BillingsleyG}}.  By inequality \eqref{eq:centeredLpbound}, we have that for all $\gamma>1$, 
\begin{align}\label{eq:GdfincDandD2}
\Big\|G_{f,t}^{(n)}-G_{f,s}^{(n)}\Big\|_{{\rm L}^{\gamma}(\Omega)}
  \leq \Big\|DG_{f,t}^{(n)}-DG_{f,s}^{(n)}\Big\|_{{\rm L}^{\gamma}(\Omega;\Hg^{d(n)})}+\Big\|D^2G_{f,t}^{(n)}-D^2G_{f,s}^{(n)}\Big\|_{{\rm L}^{\gamma}(\Omega;(\Hg^{d(n)})^2)}.
\end{align}
To bound the first term in the right hand side, we proceed as follows. Recall  
\[
DG_{f,w}^{(n)}=\{v_{k,h}; 1\leq k\leq h\leq n\},
\]
with $v_{k,h}$ given by \eqref{eq:DGform}. Therefore, 
\begin{equation}\label{eq:DGexp1}
\begin{split}
\Big\|&DG_{f,t}^{(n)}-DG_{f,s}^{(n)}\Big\|_{\Hg^{d(n)}}^2\\
  &=\frac{\sigma_{t}^2}{4n^3}\int_{[s,t]^2}\int_{[0,1]^2}\sum_{1\leq i_1,i_2,j_1,j_2\leq n}\mathfrak{F}_{i_1,i_2}(n,\theta_1, w_1)\mathfrak{F}_{j_1,j_2}(n,\theta_2, w_2)\\
	&\hspace{2cm}\times\sum_{k\leq h}\mathfrak{I}_{k,h}^{i_1,i_2}(n,\theta_1, w_1)\mathfrak{I}_{k,h}^{j_1,j_2}(n,\theta_2, w_2) R(w_1,w_2)v_{w_1}^{\prime} v_{w_2}^{\prime}\ud\theta_1\ud\theta_2 \ud w_1\ud w_2.
	\end{split}
\end{equation}
Using \eqref{eq:D1PhiUpsum}, we deduce  
\begin{equation}\label{eq:DGexp2ss}
\begin{split}
\sum_{k\leq h}&\mathfrak{I}_{k,h}^{i_1,i_2}(n,\theta_1, w_1)\mathfrak{I}_{k,h}^{j_1,j_2}(n,\theta_2, w_2)\\
	&\hspace{1cm}=4\Big(\theta_1\theta_2\delta_{i_1,j_1}+\theta_1(1-\theta_2)\delta_{i_1,j_2}+(1-\theta_1)\theta_2\delta_{i_2,j_1}+(1-\theta_1)(1-\theta_2)\delta_{i_2,j_2}\Big)\\
	&\hspace{1cm}\leq 4(\delta_{i_1,j_1}+\delta_{i_1,j_2}+\delta_{i_2,j_1}+\delta_{i_2,j_2}).
\end{split}
\end{equation}
Similarly as before, since $f^{\prime\prime\prime}\in\Pc$ there are  constants $C_2>0$ and $a>1$, such that $|f(x)|\leq C_2(1+|x|^{2a})$, which in turn implies that there exists a constant $C_6>0$, such that $|f(x+y)|\leq C_6(1+|x|^{2a}+|y|^{2a})$. Using this observation, as well as Minkowski inequality and identities \eqref{eq:DGexp1} and \eqref{eq:DGexp2ss}, we get
\[
\begin{split}
\Big\|&DG_{f,t}^{(n)}-DG_{f,s}^{(n)}\Big\|_{{\rm L}^{\gamma}(\Omega;\Hg^{d(n)})}^2\\
&\leq\frac{C_6{T^{2H}}}{n^3}\int_{[s,t]^2}\Bigg\|{\sum_{1\leq i_1,i_2,j_1,j_2\leq n}}
	\Big(1+\Big(\Phi_{i_{1}}(Y^{(n)}(w_1))\Big)^{2a}+\Big(\Phi_{i_{2}}(Y^{(n)}(w_1))\Big)^{2a}\Big)\\
	&\hspace{5.6cm}\times \Big(1+\Big(\Phi_{j_{1}}(Y^{(n)}(w_2))\Big)^{2a}+\Big(\Phi_{j_{2}}(Y^{(n)}(w_2))\Big)^{2a}\Big)\\
	&\hspace{6.5cm}\times|v_{w_1}^{\prime} v_{w_2}^{\prime}|\Big(\delta_{i_1,j_1}+\delta_{i_1,j_2}+\delta_{i_2,j_1} +\delta_{i_2,j_2}\Big)\Bigg\|_{{\rm L}^{\gamma}(\Omega)}\ud w_1\ud w_2\\
	&\leq\frac{18C_6{T^{2H}}}{n^3}\int_{[s,t]^2}\Bigg\|{\sum_{1\leq i_1,i_2,j_1,j_2\leq n}}
	\Big(1+\sum_{\ell=1}^2\Big(\Phi_{i_{1}}(Y^{(n)}(w_1))\Big)^{4a}+\sum_{\ell=1}^2\Big(\Phi_{j_{\ell}}(Y^{(n)}(w_2))\Big)^{4a}\Big)\\
	&\hspace{6.5cm}\times| v_{w_1}^{\prime} v_{w_2}^{\prime}|\Big(\delta_{i_1,j_1}+\delta_{i_1,j_2}+\delta_{i_2,j_1} +\delta_{i_2,j_2}\Big)\Bigg\|_{{\rm L}^{\gamma}(\Omega)}
 \ud w_1\ud w_2.
\end{split}
\]
We proceed similarly as in \eqref{eq:IpGdu2case} to deduce 
\begin{align*}
\Big\|DG_{f,t}^{(n)}-DG_{f,s}^{(n)}\Big\|_{{\rm L}^{\gamma}(\Omega;\Hg^{d(n)})}^2
  \leq C_{7}\int_{[s,t]^2}|v_{w_1}^{\prime} v_{w_2}^{\prime}|\ud w_1\ud w_2,
\end{align*}
for some constant $C_{7}>0$, that depends only on $a$ and  $\sup_{0\leq w\leq T}\sigma_{w}$. Next, we use  condition \textbf{(H2)} to get  
\begin{align}\label{eq:BillingsleyGprev}
\Big\|DG_{f,t}^{(n)}-DG_{f,s}^{(n)}\Big\|_{{\rm L}^{\gamma}(\Omega;\Hg^{d(n)})}^2
  \leq C_{8}(t^{\varepsilon}-s^{\varepsilon})^2,
\end{align}
for $C_{8}>0$. Since $\varepsilon\in(0,1)$, we have  $|t^{\varepsilon}-s^{\varepsilon}|\leq|t-s|^{\varepsilon}$, and thus
\begin{align}\label{eq:BillingsleyGprev2}
\Big\|DG_{f,t}^{(n)}-DG_{f,s}^{(n)}\Big\|_{{\rm L}^{\gamma}(\Omega;\Hg^{d(n)})}
  \leq C_{9}|t-s|^{\varepsilon},
\end{align}
for $C_{9}>0$, 
which gives a bound for the first term in the right-hand side of \eqref{eq:GdfincDandD2}.  To handle the second term in \eqref{eq:GdfincDandD2}, we follow a similar approach but we remark that the computations are longer due to the appearance of terms involving the second derivatives of the functions $\Phi_{i}$, with $i\in\{1,\dots, n\}$. We first observe from \eqref{eq:Gdef}, that in order to compute $D^{2}\Delta G ^{(n)}$, the knowledge of the second Malliavin derivative of variables of the form $f^{\prime\prime}(\theta\Phi_{i_1}(Y^{(n)}(w))+(1-\theta) \Phi_{i_2}(Y^{(n)}(w)))$, for $w\leq T$ and $\theta\in[0,1]$, are necessary. To this end, we introduce the notation 
\[
\mathfrak{K}_{i,j}(n, \theta, w)=f^{(4)}\Big(\theta\Phi_{i}(Y^{(n)}(w))+(1-\theta)\Phi_{j}(Y^{(n)}(w))\Big),
\]
and
\[
D^2f^{\prime\prime}(\theta\Phi_{i_1}( Y^{(n)}(w))+(1-\theta) \Phi_{i_2}(Y^{(n)}(w)))\\
  :=\Big[\zeta_{k,h}^{p,q}(n,i_1,i_2,\theta, w)\Big]_ {\substack{1\leq k\le h\leq n \\ 1\leq p\leq q\leq n}},
\]
where

\[
\begin{split}
\zeta_{k,h}^{p,q}(n,&i_1,i_2,\theta, w)
  =\frac{1}{n}\mathfrak{K}_{i_1,i_2}(n, \theta, w)\mathfrak{I}_{k,h}^{i_1,i_2}(n,\theta,w)\mathfrak{I}_{k,h}^{i_1,i_2}(n,\theta,w)\Indi{[0,w]}^{\otimes 2}\\
	&+\frac{1}{n}\mathfrak{F}_{i_1,i_2}(n,\theta, w)\left(\theta\frac{\partial^2\Phi_{i_1}}{\partial y_{k,h}\partial y_{p,q}}(Y^{(n)}(w))+(1-\theta)\frac{\partial^2\Phi_{i_2}}{\partial y_{k,h}\partial y_{p,q}}(Y^{(n)}(w))\right)\Indi{[0,w]}^{\otimes 2}.
\end{split}
\]

Next, by using \eqref{eq:D1PhiUpc} and \eqref{eq:Pikhpqdefprev}, as well as identity \eqref{eq:differencequotientint}, we have  

\begin{align*}
\sum_{i_1,i_2=1}^nD^2f^{\prime\prime}(\theta\Phi_{i_1}( Y^{(n)}(w))+(1-\theta) \Phi_{i_2}(Y^{(n)}(w)))
  &=\Theta(1,w)+\Theta(2,w)+\Theta(3,w),
\end{align*}
where $\Theta(\ell,w)=\{\Theta_{k,h}^{p,q}(\ell,w)\ ;\ 1\leq k\leq h\leq n\ \text{ and }\ 1\leq p\leq q\leq n\ \}$, for $\ell=1,2,3$ are given by
\begin{align*}
\Theta_{k,h}^{p,q}(1,w)
  &:=\frac{1}{n}\sum_{i_1,i_2=1}^{n}\mathfrak{K}_{i_1,i_2}(n, \theta, w)\big(\theta V_{k,h}^{i_1,i_1}(Y^{(n)}(w))+(1-\theta)V_{k,h}^{i_2,i_2}(Y^{(n)}(w))\big)\\
	&\hspace{4cm}\times\big(\theta V_{p,q}^{i_1,i_1}(Y^{(n)}(w))+(1-\theta)V_{p,q}^{i_2,i_2}(Y^{(n)}(w))\big)\Indi{[0,w]}^{\otimes 2},\\
	\end{align*}

	\begin{align*}
\Theta_{k,h}^{p,q}(2,w)
	&:=\frac{\theta^2}{n}\sum_{\substack{1\leq  i_1, i_2,i_3\leq n\\i_1\neq i_3}}\int_{0}^1
	f^{(4)}\bigg(\vartheta\theta\Phi_{i_1}(Y^{(n)}(w))+(1-\vartheta)\theta\Phi_{i_3}(Y^{(n)}(w))\\
	&\ \ \ \ \ \ \ \ \ \ \ \ \ \  +(1-\theta)\Phi_{i_2}(Y^{(n)}(w)) \bigg)
	V_{k,h}^{i_1,i_3}(Y^{(n)}(w))V_{p,q}^{i_1,i_3}(Y^{(n)}(w))\text{d}\vartheta\Indi{[0,w]}^{\otimes 2},\\
		\end{align*}
		 and
			\begin{align*}
\Theta_{k,h}^{p,q}(3,w)
	&:=\frac{(1-\theta)^2}{n}\sum_{\substack{1\leq  i_1, i_2,i_3\leq n\\i_2\neq i_3}}\int_{0}^1
	f^{(4)}\bigg(\theta\Phi_{i_1}(Y^{(n)}(w))+\vartheta (1-\theta)\Phi_{i_2}(Y^{(n)}(w))\\
	&\ \ \ \ \ \ \ \ \ \    + (1-\vartheta)(1-\theta)\Phi_{i_3}(Y^{(n)}(w))\bigg)
	V_{k,h}^{i_2,i_3}(Y^{(n)}(w))V_{p,q}^{i_2,i_3}(Y^{(n)}(w))\text{d}\vartheta\Indi{[0,w]}^{\otimes 2}.
\end{align*}
On the other hand, by applying Minkowski's inequality, as well as the definition of $G_{f,w}^{(n)}$, which is given by \eqref{eq:Gdef}, we deduce 
\begin{multline}\label{eq:D2Gftinc}
\Big\|D^{2} G_{f,t}^{(n)}-D^{2} G_{f,s}^{(n)}\Big\|_{{\rm L}^{2\gamma}(\Omega)}\\
\begin{aligned}
  &\leq 	\frac{1}{2n^2}\int_{s}^{t}\int_{0}^1\Big\|D^2f^{\prime\prime}(\theta\Phi_{i_1}( Y^{(n)}(w))+(1-\theta) \Phi_{i_2}(Y^{(n)}(w)))\Big\|_{{\rm L}^{2\gamma}(\Omega;(\Hg^{d})^{\otimes 2})}|v_{w}^{\prime}|\ud\theta \ud w\\
  &\leq 	\frac{1}{2n^2}\int_{s}^{t}\int_{0}^1\sum_{\ell=1,2,3} \|\Theta (\ell, w)\|_{{\rm L}^{2\gamma}(\Omega;(\Hg^{d})^{\otimes 2})}|v_{w}^{\prime}|\ud\theta \ud w.
\end{aligned}
\end{multline}
Next we bound the terms $\|\Theta (\ell, w)\|_{{\rm L}^{2\gamma}(\Omega;(\Hg^{d})^{\otimes 2})}$, for $\ell=1,2,3$. In order to handle  the case $\ell=1$, we first notice that by \eqref{eq:VandW}, for all $1\leq i_1,i_2,j_2,j_2\leq n$, 
\[
\begin{split}
\sum_{\substack{1\leq k\leq h\leq n\\1\leq p\leq q\leq n}}&\big(\theta V_{k,h}^{i_1,i_1} +(1-\theta)V_{k,h}^{i_2,i_2} \big)\big(\theta V_{p,q}^{i_1,i_1} +(1-\theta)V_{p,q}^{i_2,i_2} \big)\\
&\hspace{3cm}\times\big(\theta V_{k,h}^{j_1,j_1} +(1-\theta)V_{k,h}^{j_2,j_2} \big)\big(\theta V_{p,q}^{j_1,j_1} +(1-\theta)V_{p,q}^{j_2,j_2} \big)\\
 & =4\big(\theta\delta_{i_1,j_1}+\theta(1-\theta)\delta_{i_1,j_2}+\theta(1-\theta)\delta_{i_2,j_1}+(1-\theta)^2\delta_{i_2,j_2}\big)^2\\
	&\leq32\big(\delta_{i_1,j_1}+ \delta_{i_1,j_2}+ \delta_{i_2,j_1}+ \delta_{i_2,j_2}\big).
\end{split}
\]
Putting all pieces together, we have
\begin{align*}
\|\Theta (1, w)\|_{(\Hg^{d})^{\otimes 2}}^2
  &\leq\frac{32 T^{4H}}{n^2}\sum_{i_1,i_2,j_2=1}^{n}|\mathfrak{K}_{i_1,i_2}(n, \theta, w)||\mathfrak{K}_{i_1,j_2}(n, \theta, w)|\\
&+\frac{32 T^{4H}}{n^2}\sum_{i_1,i_2,j_1=1}^{n}|\mathfrak{K}_{i_1,i_2}(n, \theta, w)| |\mathfrak{K}_{j_1,i_1}(n, \theta, w)|\\
	&+\frac{32T^{4H}}{n^2}\sum_{i_1,i_2,j_2=1}^{n}|\mathfrak{K}_{i_1,i_2}(n, \theta, w)| |\mathfrak{K}_{i_2,j_2}(n, \theta, w)|\\
	&+\frac{32T^{4H}}{n^2}\sum_{i_1,i_2,j_1=1}^{n}|\mathfrak{K}_{i_1,i_2}(n, \theta, w)||\mathfrak{K}_{j_1,i_2}(n, \theta, w)|.\\
\end{align*}
Using the fact that $f^{(4)}$ has polynomial growth and $\theta\in[0,1]$, we can easily deduce from the previous inequality that there exists $a\in\N$, and a constant $C_{10}>0$, than only depends on $f$, such that 
\begin{align*}
\|\Theta (1,w)\|_{(\Hg^{d})^{\otimes 2}}^2
&\leq C_{10}\frac{T^{4H}}{n^2}\sum_{i_1,i_2,i_3=1}^{n}\Big(1+|\Phi_{i_1}(Y^{(n)}(w))|^a+|\Phi_{i_2}(Y^{(n)}(w))|^a+|\Phi_{i_3}(Y^{(n)}(w))|^a\Big), 
\end{align*}
which by Lemma \ref{lem:Wignernorm}, implies that there exist a constant $C_{11}>0$, such that
\begin{align}\label{eq:Thetaw1norm}
\|\Theta (1,w)\|_{{\rm L}^{2\gamma}(\Omega,(\Hg^{d})^{\otimes 2})}^2&\leq C_{10}\frac{T^{4H}}{n^2}\bigg\|\sum_{i_1,i_2,i_3=1}^{n}\bigg(1+|\Phi_{i_1}(Y^{(n)}(w))|^a\nonumber\\
	&\ \ \ \ \ \ \ \ \ \ \ \ \ \ +|\Phi_{i_2}(Y^{(n)}(w))|^a+|\Phi_{i_3}(Y^{(n)}(w))|^a\bigg)\bigg\|_{{\rm L}^{\gamma}(\Omega,(\Hg^{d})^{\otimes 2})}\\
	&\leq C_{11} nT^{4H}.\nonumber
\end{align}
On the other hand, by \eqref{eq:VandW}, for all indices $1\leq d_1,d_2,l_1,l_3\leq n$, we deduce
\begin{align*}
\sum_{\substack{1\leq k\leq h\leq n\\1\leq p\leq q\leq n}}V_{k ,h }^{d_1,l_1}V_{p ,q }^{d_1,l_1}
V_{k,h }^{d_2,l_2}V_{p ,q }^{d_2,l_2}
  &=(\delta_{d_1,d_2}\delta_{l_1,l_2}+\delta_{d_1,l_2}\delta_{l_1,d_2})^2
	\leq 4(\delta_{d_1,d_2}\delta_{l_1,l_2}+\delta_{d_1,l_2}\delta_{l_1,d_2})
\end{align*}
which by an analogous argument  to the proof of \eqref{eq:Thetaw1norm}, leads to 
\begin{align}\label{eq:Thetaw1norm2}
\|\Theta (\ell,w)\|_{{\rm L}^{2\gamma}(\Omega,(\Hg^{d})^{\otimes 2})}^2
  &\leq C_{12}n^2T^{4H},
\end{align}
where $\ell=2,3$ and $C_{12}$ is a strictly positive constant. Therefore, by \eqref{eq:D2Gftinc}, we obtain
\begin{align*}
\Big\|D^{2} G_{f,t}^{(n)}-D^{2} G_{f,s}^{(n)}\Big\|_{{\rm L}^{2\gamma}(\Omega)}
  &\leq 	C_{13}\frac{1}{n}\int_{s}^{t} |v_{w}^{\prime}| \ud w,
\end{align*}
with $C_{13}>0$. Hence, using the condition \textbf{(H2)}, we obtain
\begin{align}\label{eq:BillingsleyGprev3}
\Big\|D^2 G_{f,t}^{(n)}-D^2 G_{f,s}^{(n)}\Big\|_{{\rm L}^{2\gamma}(\Omega;\Hg^{d(n)})}
  \leq C_{14} (t^{\varepsilon}-s^{\varepsilon})
	\leq C_{14} (t-s)^{\varepsilon},
\end{align}
where $C_{14}>0$. 
Finally, by \eqref{eq:GdfincDandD2}, \eqref{eq:BillingsleyGprev2} and \eqref{eq:BillingsleyGprev3}, we obtain 
\begin{align*}
\Big\| G_{f,t}^{(n)}-G_{f,s}^{(n)}\Big\|_{{\rm L}^{2\gamma}(\Omega)}
  \leq C_{15}|t-s|^{\varepsilon},
\end{align*}
as required. This completes the proof.
\end{proof}

\section{Appendix}
Here, we use the same notation as in Section \ref{sec:eigenvalues}. Recall that $d(n)=n(n+1)/2$ and for  every $x\in\R^{d(n)}$,  $\Phi_i(x)$ denotes the $i$-th largest eigenvalue of the matrix $\widehat{x}$. 
\begin{Lemma}\label{lem:app}
Let $V_{k,h}^{i,j}(x)$ be as in \eqref{eq:Vdef}. Then the first and second order partial derivatives of $\Phi_i(x)$ are given by 
\begin{align}
\frac{\partial \Phi_{i}}{\partial x_{k,h}}(x)
  &=V_{k,h}^{i,i}(x),\label{eq:D1PhiU}\\
\frac{\partial^{2}\Phi_{i}}{\partial x_{k,h}\partial x_{p,q}}(x)
  &=\sum_{j=1}^{n}\frac{2\Indi{\{j\neq i\}}}{\Phi_{i}(x)-\Phi_{j}(x)}V_{k,h}^{i,j}(x)V_{p,q}^{i,j}(x).\label{eq:D2partialPhiU1}
	\end{align}
\end{Lemma}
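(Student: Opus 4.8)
The plan is to recognize \eqref{eq:D1PhiU} and \eqref{eq:D2partialPhiU1} as the classical first- and second-order perturbation identities for a simple eigenvalue of a symmetric matrix, once the parametrization $x\mapsto\widehat{x}$ (and in particular the $\sqrt{2}$ on the diagonal) is accounted for. Throughout I would work on the open dense set $G\subset\R^{d(n)}$ introduced in Section~\ref{sec:eigenvalues}, on which $\widehat{x}=T_1(x)T_2(x)T_1(x)^{*}$ with $\Phi_1(x)>\cdots>\Phi_n(x)$ all simple and with $U=T_1$ and the $\Phi_i$ differentiable; this guarantees that every quantity below is well defined and that ordinary analytic perturbation theory applies.

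First I would introduce, for $k\le h$, the constant symmetric matrix $A_{k,h}:=\partial\widehat{x}/\partial x_{k,h}$, which equals $E_{k,h}+E_{h,k}$ when $k<h$ and $\sqrt{2}\,E_{k,k}$ when $k=h$, where $E_{k,h}$ denotes the elementary matrix with a single $1$ in position $(k,h)$. Writing $u_i=u_i(x)$ for the $i$-th column of $U(x)$ (so that $(u_i)_m=U_{m,i}$), a direct computation of $u_i^{*}A_{k,h}u_j$ in the two cases $k<h$ and $k=h$ reproduces exactly the definition \eqref{eq:Vdef}, namely
\begin{equation*}
u_i^{*}A_{k,h}u_j=V_{k,h}^{i,j}(x),\qquad 1\le i,j\le n,\ k\le h.
\end{equation*}
This is the single elementary computation that carries the $\sqrt{2}$ bookkeeping, and it feeds both identities.

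With this in hand, \eqref{eq:D1PhiU} is the first-order (Hellmann--Feynman) formula $\partial\Phi_i/\partial x_{k,h}=u_i^{*}A_{k,h}u_i=V_{k,h}^{i,i}(x)$, obtained by differentiating $\widehat{x}u_i=\Phi_i u_i$, pairing with $u_i$, and using $\Norm{u_i}=1$ together with the self-adjointness of $\widehat{x}$. For \eqref{eq:D2partialPhiU1} I would differentiate this first-order identity once more in the direction $x_{p,q}$. Since $x\mapsto\widehat{x}$ is linear, the matrices $A_{k,h}$ are constant, so the term involving $\partial A_{k,h}/\partial x_{p,q}$ vanishes and only the eigenvector derivative survives. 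Using the standard expansion on $G$, $\partial u_i/\partial x_{p,q}=\sum_{j\neq i}(\Phi_i-\Phi_j)^{-1}(u_j^{*}A_{p,q}u_i)\,u_j$, together with the symmetry $V_{k,h}^{i,j}=V_{k,h}^{j,i}$ that is immediate from \eqref{eq:Vdef}, the two resulting sums coincide and add up to
\begin{equation*}
\frac{\partial^{2}\Phi_i}{\partial x_{k,h}\partial x_{p,q}}(x)=2\sum_{j\neq i}\frac{V_{k,h}^{i,j}(x)\,V_{p,q}^{i,j}(x)}{\Phi_i(x)-\Phi_j(x)},
\end{equation*}
which is \eqref{eq:D2partialPhiU1}.

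The only genuine subtlety — hence the step I expect to be the main obstacle and would write out most carefully — is the justification of the eigenvector perturbation formula: one must use that the eigenvalues are simple on $G$ (so that the expansion of $\partial u_i$ is legitimate and the denominators $\Phi_i-\Phi_j$ never vanish for $j\neq i$) and fix a smooth, normalized choice of eigenvectors, for which the differentiable frame $T_1$ from Section~\ref{sec:eigenvalues} may be invoked, ensuring $u_i^{*}\partial u_i=0$ so that no spurious component of $\partial u_i$ along $u_i$ enters. Everything else reduces to the bilinear evaluation of $u_i^{*}A_{k,h}u_j$ and a reindexing of the two symmetric sums.
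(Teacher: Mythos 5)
Your proposal is correct and takes essentially the same route as the paper: the appendix proves the Hadamard variational formulas \eqref{eq:Hadamard1}--\eqref{eq:Hadamard2} by differentiating $AU_i=\lambda_i U_i$, pairing with $U_i^{*}$ and $U_j^{*}$, and using $U_i^{*}\frac{\partial U_i}{\partial\theta}=0$ from the normalization, which is exactly your Hellmann--Feynman step plus the eigenvector perturbation expansion on the set $G$ of simple spectra. The only cosmetic difference is that the paper states the two-parameter formulas in general (with an explicit $U_i^{*}\frac{\partial^2 A}{\partial\theta\partial\beta}U_i$ term) before specializing to $\theta=x_{k,h}$, $\beta=x_{p,q}$, whereas you exploit the linearity of $x\mapsto\widehat{x}$ from the outset; the substance, including the $\sqrt{2}$ bookkeeping $u_i^{*}A_{k,h}u_j=V_{k,h}^{i,j}(x)$, is identical.
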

The previous lemma is a particular case of a more general result. Consider an $n \times n$ real symmetric matrix $A(\theta,\beta)$ which is twice continuously differentiable over the real parameters $\theta$ and $\beta$. Assume that $A(\theta,\beta)$ possesses eigenvalues $\lambda_1(\theta,\beta)>\cdots >\lambda_n(\theta,\beta)$ with orthonormal eigenvectors $U_{1}(\theta,\beta),\dots, U_{n}(\theta,\beta)$ of the form $U_{i}(\theta,\beta)=(U_{1,i}(\theta,\beta),\dots, U_{n,i}(\theta,\beta))^T$, which are continuously differentiable over $\theta$ and $\beta$.

\begin{Lemma}
The following Hadamard variational formulas hold true
\begin{align}
\frac{\partial \lambda_i}{\partial\theta}(\theta,\beta)
  &=U_{i}^{*}(\theta,\beta)\frac{\partial A}{\partial\theta}(\theta,\beta)U_i(\theta,\beta)\label{eq:Hadamard1},\\
\frac{\partial^2 \lambda_i}{\partial\theta\partial\beta}(\theta,\beta)
  &=U_{i}^{*}(\theta,\beta)\frac{\partial^2 A}{\partial \theta\partial\beta}(\theta,\beta)U_{i}(\theta,\beta)\nonumber\\
  &+\sum_{j=1}^{n}\frac{2\Indi{\{j\neq i\}}\big(U_{i}^{*}(\theta,\beta)\frac{\partial A}{\partial \theta}(\theta,\beta) U_{j}(\theta,\beta)\big)\big(U_{j}^{*}(\theta,\beta)\frac{\partial A}{\partial \beta}(\theta,\beta)U_{i}(\theta,\beta)\big)}{\lambda_{i}(\theta,\beta)-\lambda_j(\theta,\beta)}.\label{eq:Hadamard2}
\end{align}
\end{Lemma}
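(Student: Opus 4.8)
The plan is to differentiate the defining relations of the spectral decomposition, namely the eigenvalue equation $A U_i = \lambda_i U_i$ and the orthonormality relation $U_i^* U_j = \delta_{i,j}$, using throughout the symmetry of $A$ (hence of $\partial A/\partial\theta$ and $\partial A/\partial\beta$) and the simplicity hypothesis $\lambda_1>\cdots>\lambda_n$. I suppress the arguments $(\theta,\beta)$ for readability.

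First I would establish \eqref{eq:Hadamard1}. Differentiating $A U_i=\lambda_i U_i$ with respect to $\theta$ gives $\tfrac{\partial A}{\partial\theta}U_i+A\tfrac{\partial U_i}{\partial\theta}=\tfrac{\partial\lambda_i}{\partial\theta}U_i+\lambda_i\tfrac{\partial U_i}{\partial\theta}$. Left-multiplying by $U_i^*$ and using the symmetry identity $U_i^*A=\lambda_i U_i^*$ together with $U_i^*U_i=1$, the two terms containing $\tfrac{\partial U_i}{\partial\theta}$ cancel and \eqref{eq:Hadamard1} follows immediately.

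The second formula requires an intermediate computation of the eigenvector derivative. I would expand $\tfrac{\partial U_i}{\partial\theta}=\sum_{j}c_{ij}U_j$ in the orthonormal eigenbasis. Differentiating $U_i^*U_i=1$ shows $c_{ii}=0$, while left-multiplying the differentiated eigenvalue equation above by $U_j^*$ for $j\neq i$ (again invoking $U_j^*A=\lambda_j U_j^*$ and $U_j^*U_i=0$) yields $c_{ij}=\big(U_j^*\tfrac{\partial A}{\partial\theta}U_i\big)/(\lambda_i-\lambda_j)$. Here the simplicity of the spectrum is essential, as it guarantees the denominators do not vanish. Thus $\tfrac{\partial U_i}{\partial\theta}=\sum_{j\neq i}\tfrac{U_j^*(\partial A/\partial\theta)U_i}{\lambda_i-\lambda_j}U_j$, and the analogous expansion holds with $\beta$ in place of $\theta$.

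Finally I would differentiate \eqref{eq:Hadamard1} with respect to $\beta$, producing three terms: the term $U_i^*\tfrac{\partial^2 A}{\partial\theta\partial\beta}U_i$ (using equality of mixed partials for the $\mathcal{C}^2$ map $A$), which is the first summand of \eqref{eq:Hadamard2}, together with $\tfrac{\partial U_i^*}{\partial\beta}\tfrac{\partial A}{\partial\theta}U_i$ and $U_i^*\tfrac{\partial A}{\partial\theta}\tfrac{\partial U_i}{\partial\beta}$. Substituting the eigenvector-derivative expansion into these last two, and using the symmetry identity $U_j^*\tfrac{\partial A}{\partial\theta}U_i=U_i^*\tfrac{\partial A}{\partial\theta}U_j$, one finds that both off-diagonal contributions equal $\sum_{j\neq i}\tfrac{(U_i^*(\partial A/\partial\theta)U_j)(U_j^*(\partial A/\partial\beta)U_i)}{\lambda_i-\lambda_j}$, so that adding them produces exactly the factor $2$ and the indicator $\Indi{\{j\neq i\}}$ appearing in \eqref{eq:Hadamard2}. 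The one step that deserves care is this final symmetric combination of the two off-diagonal terms; everything else is direct differentiation and bookkeeping. Lemma \ref{lem:app} then follows by specializing $A(\theta,\beta)$ to depend linearly on the $(k,h)$- and $(p,q)$-entries of $\widehat{x}$, so that $\tfrac{\partial A}{\partial\theta}$ and $\tfrac{\partial A}{\partial\beta}$ become the corresponding (symmetrized) coordinate matrices and the inner products $U_i^*(\partial A/\partial\theta)U_j$ reduce to the quantities $V_{k,h}^{i,j}(x)$ of \eqref{eq:Vdef}.
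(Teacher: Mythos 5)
Your proposal is correct and takes essentially the same approach as the paper: differentiate the eigenvalue equation $AU_i=\lambda_i U_i$, project onto the orthonormal eigenbasis, use $U_i^*\frac{\partial U_i}{\partial\theta}=0$ together with the eigenvector-derivative expansion $\frac{\partial U_i}{\partial\theta}=\sum_{j\neq i}\frac{U_j^*\frac{\partial A}{\partial\theta}U_i}{\lambda_i-\lambda_j}U_j$ (valid by simplicity of the spectrum), and let the symmetry of $\frac{\partial A}{\partial\theta}$ and $\frac{\partial A}{\partial\beta}$ produce the factor $2$. The only cosmetic difference is that you differentiate the scalar identity \eqref{eq:Hadamard1} in $\beta$, whereas the paper differentiates the vector identity \eqref{eq:Had1} and then projects with $U_i^*$; your variant even avoids the second derivatives of $U_i$ that appear (and subsequently cancel) in the paper's computation.
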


Provided that we prove \eqref{eq:Hadamard1} and \eqref{eq:Hadamard2}, we obtain \eqref{eq:D1PhiU} by taking $\theta=x_{k,h}$, and \eqref{eq:D2partialPhiU1} by taking $\theta=x_{k,h}$ and $\beta=x_{p,q}$. The previous lemma can be found in Section~1.3.4 in the book by Tao \cite{tao2012topics}. For the reader's convenience, we provide its proof.

\begin{proof} For simplicity of exposition, in what follows we omit the dependence on the parameters $\theta$ and $\beta$ of  $A(\theta, \beta)$, $U_{i}(\theta,\beta)$ and $\lambda_{i}(\theta,\beta)$. 

We first deduce identity \eqref{eq:Hadamard1}. By taking the derivative with respect to $\theta$ of  $AU_{i}=\lambda_{i}U_{i}$, we get 
\begin{align}\label{eq:Had1}
\frac{\partial A}{\partial \theta}U_{i}+A\frac{\partial U_{i}}{\partial \theta}
  &=\frac{\partial \lambda_{i}}{\partial \theta} U_{i} + \lambda_{i}\frac{\partial U_{i}}{\partial \theta}.
\end{align}
Multiplying \eqref{eq:Had1} by $U_{i}^{*}$ from the left, and using the fact that $U_i^*A=\lambda_iU_i^*$ and $|U_{i}|^2=1$, we have 
\begin{align}\label{eq:Had1aux1}
U_i^*\frac{\partial A}{\partial \theta} U_{i}+ \lambda_{i}U_i^*\frac{\partial U_{i}}{\partial \theta}
  &=\frac{\partial \lambda_{i}}{\partial \theta} + \lambda_{i}U_{i}^*\frac{\partial U_{i}}{\partial \theta}.
\end{align}
On the other hand, if we take the derivative with respect to $\theta$ of $|U_{i}|^2=1$, we obtain 
\begin{align}\label{eq:UidUiorthog}
U_{i}^*\frac{\partial U_{i}}{\partial \theta}=0,
\end{align}
thus,  putting all pieces together, we deduce
\begin{align*}
U_i^*\frac{\partial A}{\partial \theta} U_{i}
  &=\frac{\partial \lambda_{i}}{\partial \theta},
\end{align*}
as required.

For identity \eqref{eq:Hadamard2}, we first take the derivative with respect to $\beta$ in \eqref{eq:Had1}, and obtain
\begin{align*}
\frac{\partial^2 A}{\partial \theta\partial\beta}U_{i}+\frac{\partial A}{\partial \theta}\frac{\partial U_{i}}{\partial\beta}+\frac{\partial A}{\partial\beta}\frac{\partial U_{i}}{\partial\theta}+A\frac{\partial^2 U_{i}}{\partial \theta\partial\beta}
  &=\frac{\partial^2 \lambda_{i}}{\partial \theta\partial\beta} U_{i}+\frac{\partial \lambda_{i}}{\partial \theta} \frac{\partial U_{i}}{\partial\beta}
	+ \frac{\partial\lambda_{i}}{\partial\beta}\frac{\partial U_{i}}{\partial \theta}+\lambda_{i}\frac{\partial^2 U_{i}}{\partial \theta\partial\beta}.
\end{align*}
Again, we multiply by $U_{i}^*$ from the left and use the identities  $U_i^*A=\lambda_iU_i^*$ and $|U_{i}|^2=1$, to deduce
\begin{align*}
U_{i}^{*}\frac{\partial^2 A}{\partial \theta\partial\beta}U_{i}+U_{i}^{*}\frac{\partial A}{\partial \theta}\frac{\partial U_{i}}{\partial\beta}+U_{i}^{*}\frac{\partial A}{\partial\beta}\frac{\partial U_{i}}{\partial\theta}
  &=\frac{\partial^2 \lambda_{i}}{\partial \theta\partial\beta}
	+ \frac{\partial\lambda_{i}}{\partial\theta}U_i^*\frac{\partial U_{i}}{\partial \beta}
	+\frac{\partial\lambda_{i}}{\partial\beta}U_i^*\frac{\partial U_{i}}{\partial \theta}.
\end{align*}
Next, simplifying the above identity and using \eqref{eq:UidUiorthog}, we get
\begin{align}\label{eq:partiallambdathetabeta}
U_{i}^{*}\frac{\partial^2 A}{\partial \theta\partial\beta}U_{i}+U_{i}^{*}\bigg(\frac{\partial A}{\partial \theta}\frac{\partial U_{i}}{\partial\beta}+\frac{\partial A}{\partial\beta}\frac{\partial U_{i}}{\partial\theta}\bigg)
  &=\frac{\partial^2 \lambda_{i}}{\partial \theta\partial\beta}.
\end{align} 
The term inside the  parenthesis, in the left hand side, can be written by expanding $\frac{\partial U_{i}}{\partial\beta}$ and $\frac{\partial U_{i}}{\partial\theta}$ in terms of the basis $U_{1},\dots, U_{n}$, as follows
\begin{align*}
\frac{\partial A}{\partial \theta}\frac{\partial U_{i}}{\partial\beta}+\frac{\partial A}{\partial\beta}\frac{\partial U_{i}}{\partial\theta}
	&=\sum_{j\neq i}\frac{\partial A}{\partial \theta} U_{j}\left\langle \frac{\partial U_{i}}{\partial\beta},U_j\right\rangle+\sum_{j\neq i}\frac{\partial A}{\partial\beta}U_{j}\left\langle\frac{\partial U_{i}}{\partial\theta},U_{j}\right\rangle\nonumber\\
	&+\frac{\partial A}{\partial \theta} U_{i}\left\langle\frac{\partial U_{i}}{\partial\beta},U_{i}\right\rangle+\frac{\partial A}{\partial\beta}U_{i}\left\langle\frac{\partial U_{i}}{\partial\theta},U_{i}\right\rangle\nonumber.
\end{align*}
Hence, using again  \eqref{eq:UidUiorthog}, we observe
\begin{align}\label{eq:Authetabeta}
\frac{\partial A}{\partial \theta}\frac{\partial U_{i}}{\partial\beta}+\frac{\partial A}{\partial\beta}\frac{\partial U_{i}}{\partial\theta}
		&=\sum_{j\neq i}\frac{\partial A}{\partial \theta} U_{j}\left\langle \frac{\partial U_{i}}{\partial\beta},U_j\right\rangle+\sum_{j\neq i}\frac{\partial A}{\partial\beta}U_{j}\left\langle\frac{\partial U_{i}}{\partial\theta},U_{j}\right\rangle.
\end{align}
The inner products in the right hand side can be computed by multiplying \eqref{eq:Had1} by $U_{j}^{*}$ from the left for $j\neq i$, and using the fact that $\lambda_j  U_j^{*}=U_j^{*}A$, to get
\begin{align*}
U_{j}^*\frac{\partial A}{\partial \theta}U_{i}+\lambda_{j}U_{j}^*\frac{\partial U_{i}}{\partial \theta}
  &=\lambda_{i}U_{j}^*\frac{\partial U_{i}}{\partial \theta},
\end{align*}
which implies that for every $i\neq j$,
\begin{align*}
\left\langle\frac{\partial U_{i}}{\partial\theta},U_{j}\right\rangle
  &=U_{j}^{*}\frac{\partial U_{i}}{\partial \theta}
  =\frac{U_{j}^*\frac{\partial A}{\partial \theta}U_{i}}{\lambda_{i}-\lambda_j}.
\end{align*}
Similarly, we have that
\begin{align*}
\left\langle\frac{\partial U_{i}}{\partial\beta},U_{j}\right\rangle 
 &=\frac{U_{j}^*\frac{\partial A}{\partial \beta}U_{i}}{\lambda_{i}-\lambda_j}.
\end{align*}
Combining the previous relations with \eqref{eq:Authetabeta}, we obtain
\begin{align*}
\frac{\partial A}{\partial \theta}\frac{\partial U_{i}}{\partial\beta}+\frac{\partial A}{\partial\beta}\frac{\partial U_{i}}{\partial\theta}
  &=\sum_{j\neq i}\frac{\partial A}{\partial \theta} U_{j}\frac{U_{j}^*\frac{\partial A}{\partial \beta}U_{i}}{\lambda_{i}-\lambda_j}+\sum_{j \neq i}\frac{\partial A}{\partial\beta}U_{j}\frac{U_{j}^*\frac{\partial A}{\partial \theta}U_{i}}{\lambda_{i}-\lambda_j}.
\end{align*}
Multiplying by $U^*_i$ in the previous identity, we get 
\begin{align}\label{eq:Uipartialmixed}
U_i^*\frac{\partial A}{\partial \theta}\frac{\partial U_{i}}{\partial\beta}+U_i^*\frac{\partial A}{\partial\beta}\frac{\partial U_{i}}{\partial\theta}
	&=\sum_{j\neq i}\frac{2\big(U_i^*\frac{\partial A}{\partial \theta} U_{j}\big)\big(U_{j}^*\frac{\partial A}{\partial \beta}U_{i}\big)}{\lambda_{i}-\lambda_j}.
\end{align}
Therefore, identity \eqref{eq:Hadamard2} follows from \eqref{eq:partiallambdathetabeta} and \eqref{eq:Uipartialmixed}. The proof is now complete.
\end{proof}

\begin{Lemma}\label{lem:K}
Consider the Kernel
\begin{align*}
K_{\rho}(x,y)
  &:=\sum_{q=0}^{\infty}U_{q}(x)U_{q}(y)\rho^{q}. 
\end{align*}
Then, for every $x,y\in(-2,2)$ and $\rho\in[0,1)$, the series defining $K_\rho(x,y)$ is absolutely convergent and
\begin{align}\label{eq:Kform}
K_{\rho}\big(x,y\big)
  &=\frac{1-\rho^2}{\rho^2(x-y)^2-xy\rho(1-\rho)^2+(1-\rho^2)^2}.
\end{align}
Furthermore, $K_{\rho}(x,y)\geq 0$ for all $x,y\in(-2,2)$ and $K_{\rho}$ is integrable over $(-2,2)^2$.
\end{Lemma}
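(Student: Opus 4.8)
The plan is to trivialize everything through the angle substitution $x=2\cos\alpha$, $y=2\cos\beta$ with $\alpha,\beta\in(0,\pi)$, under which the defining identity \eqref{eq:Chebyshevdef} reads $U_q(x)=\sin((q+1)\alpha)/\sin\alpha$ and likewise for $y$. For absolute convergence I would first record the elementary inequality $|\sin(m\theta)|\le m\,|\sin\theta|$ (an immediate induction on $m$ via the angle-addition formula), which gives the uniform bound $|U_q(x)|\le q+1$ on $[-2,2]$; hence $\sum_{q\ge 0}|U_q(x)U_q(y)|\rho^q\le\sum_{q\ge 0}(q+1)^2\rho^q<\infty$ for every $\rho\in[0,1)$, so the series converges absolutely.

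For the closed form I would substitute the trigonometric expressions, shift the summation index to $m=q+1$, and apply the product-to-sum identity $\sin(m\alpha)\sin(m\beta)=\tfrac12\bigl[\cos(m\phi_-)-\cos(m\phi_+)\bigr]$ with $\phi_\pm:=\alpha\pm\beta$, together with the classical geometric sum $\sum_{m\ge 1}\rho^m\cos(m\phi)=(\rho\cos\phi-\rho^2)/(1-2\rho\cos\phi+\rho^2)$, obtained as the real part of $\sum_{m\ge 1}(\rho e^{\ib\phi})^m$. Writing $D_\pm:=1-2\rho\cos\phi_\pm+\rho^2$, this presents $K_\rho(x,y)$ as a single rational function over the common denominator $D_-D_+$. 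The one genuinely computational step, which I expect to be the crux, is simplifying the resulting numerator: setting $c_\pm:=\cos\phi_\pm$, $u:=\cos\alpha\cos\beta$, $v:=\sin\alpha\sin\beta$ so that $c_\pm=u\pm v$, the combination $(c_--\rho)D_+-(c_+-\rho)D_-$ collapses — after the cross terms cancel — to $(c_--c_+)(1-\rho^2)=2v(1-\rho^2)$. The overall prefactor $1/(\sin\alpha\sin\beta)=1/v$ then exactly cancels this $v$, leaving the remarkably clean identity $K_\rho(x,y)=(1-\rho^2)/(D_-D_+)$; it is precisely this cancellation that eliminates the semicircle weights.

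It remains to rewrite $D_-D_+$ in the original variables and to read off positivity and integrability. Using $\cos\alpha=x/2$, $\cos\beta=y/2$ and $\sin^2\alpha=(4-x^2)/4$, $\sin^2\beta=(4-y^2)/4$, one finds $c_-+c_+=2u=xy/2$ and $c_-c_+=u^2-v^2=(x^2+y^2-4)/4$, so that $D_-D_+=(1+\rho^2)^2-\rho(1+\rho^2)xy+\rho^2(x^2+y^2-4)$, which a direct expansion matches term by term with the denominator $\rho^2(x-y)^2-xy\rho(1-\rho)^2+(1-\rho^2)^2$ of \eqref{eq:Kform}; being a polynomial identity, it then holds for all $x,y$. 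Non-negativity is immediate, since $1-\rho^2>0$ and $D_\pm=(1-\rho)^2+2\rho(1-\cos\phi_\pm)\ge(1-\rho)^2>0$ for $\rho\in[0,1)$, whence $K_\rho>0$. Finally, the same estimate yields $D_-D_+\ge(1-\rho)^4$, so that $0<K_\rho(x,y)\le(1-\rho^2)/(1-\rho)^4=(1+\rho)/(1-\rho)^3$ uniformly on the square; as $(-2,2)^2$ has finite Lebesgue measure, integrability follows at once.
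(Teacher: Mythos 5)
Your proof is correct, and I verified the crux computation: with $c_\pm=\cos(\alpha\pm\beta)$ and $D_\pm=1-2\rho c_\pm+\rho^2$, the numerator $(c_--\rho)D_+-(c_+-\rho)D_-$ does collapse to $(c_--c_+)(1-\rho^2)=2\sin\alpha\sin\beta\,(1-\rho^2)$, and $D_-D_+=(1-\rho^2)^2-\rho(1+\rho^2)xy+\rho^2(x^2+y^2)$ agrees term by term with the denominator in \eqref{eq:Kform}. For the closed form itself your route is essentially the paper's argument in real-trigonometric clothing: the paper writes $U_q(2x)=(a^{q+1}-\overline{a}^{q+1})/(2\mathbf{i}\sqrt{1-x^2})$ with $a=x+\mathbf{i}\sqrt{1-x^2}$ on the unit circle and sums four complex geometric series, which is exactly your product-to-sum identity plus the real cosine series $\sum_{m\geq 1}\rho^m\cos(m\phi)$ after taking real parts. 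The genuine differences lie in the three auxiliary claims, and there your treatment is cleaner and strictly stronger. For absolute convergence the paper's estimate \eqref{eq:KernelBound} carries the semicircle-type singularity $1/\sqrt{(1-x^2)(1-y^2)}$, while your bound $|U_q(x)|\leq q+1$ gives $\sum_{q\geq 0}(q+1)^2\rho^q=(1+\rho)/(1-\rho)^3$, uniform on the closed square $[-2,2]^2$. For positivity the paper argues indirectly, by continuity and contradiction: if $K_\rho$ were negative somewhere, the denominator in \eqref{eq:KernelReduction} would have to vanish in between, forcing the series to blow up against \eqref{eq:KernelBound}; you instead read strict positivity directly off the factorization $K_\rho=(1-\rho^2)/(D_-D_+)$ with $D_\pm=(1-\rho)^2+2\rho(1-c_\pm)\geq(1-\rho)^2>0$, which is the more transparent proof and additionally yields the uniform bound $K_\rho\leq(1+\rho)/(1-\rho)^3$, making integrability immediate (the paper must instead integrate its singular bound over the square). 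One microscopic point worth tidying in a write-up: after the index shift your sum is $\sum_{m\geq 1}\rho^{m-1}\sin(m\alpha)\sin(m\beta)$, so invoking the stated cosine series implicitly divides by $\rho$; the excluded case $\rho=0$ is trivial ($K_0\equiv 1$, consistent with \eqref{eq:Kform}), so nothing is lost.
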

\begin{proof}
For $x\in(-1,1)$, define $\widetilde{U}_q(x):=U_q\big(2x\big)$. It is not hard to verify that $(\widetilde{U}_q;q\in\mathbb{N})$ are the Chebyshev polynomials of the second kind on $[-1,1]$. Using the well-known formula
\begin{align*}
\widetilde{U}_q(x)
  &=\frac{(x+\textbf{i}\sqrt{1-x^2})^{q+1}-(x-\textbf{i}\sqrt{1-x^2})^{q+1}}{2\textbf{i}\sqrt{1-x^2}},
\end{align*}
and defining $a:=x+\textbf{i}\sqrt{1-x^2}$ and $b:=y+\textbf{i}\sqrt{1-y^2}$, we get 
\begin{align*}
\widetilde{U}_q(x)\widetilde{U}_q(y)\rho^{q}
  &=\frac{-\rho^{q}}{4\sqrt{(1-x^2)(1-y^2)}}(a^{q+1}-\overline{a}^{q+1})(b^{q+1}-\overline{b}^{q+1})\\
	&=\frac{-1}{4\rho\sqrt{(1-x^2)(1-y^2)}}((ab\rho)^{q+1}+(\overline{ab}\rho)^{q+1}-(a\overline{b}\rho)^{q+1}-(\overline{a}b\rho)^{q+1}).
\end{align*}
Observe that $|a|=|b|=1$, and thus, since $\rho\in(0,1)$, we can compute the sum over $q$ by means of the geometric series, i.e. 
\begin{align*}
\sum_{q=0}^{\infty}\widetilde{U}_q(x)\widetilde{U}_q(y)\rho^{q}
  &=\frac{-1}{4\rho\sqrt{(1-x^2)(1-y^2)}}\bigg(\frac{ab\rho}{1-ab\rho}+\frac{\overline{ab}\rho}{1-\overline{ab}\rho}-\frac{a\overline{b}\rho}{1-a\overline{b}\rho}-\frac{\overline{a}b\rho}{1-\overline{a}b\rho}\bigg)\\
	&=\frac{-1}{4\sqrt{(1-x^2)(1-y^2)}}\bigg(\frac{ab+\overline{ab}-2\rho}{|1-ab\rho|^2}-\frac{a\overline{b}+\overline{a}b-2\rho}{|1-a\overline{b}\rho|^{2}}\bigg)\\
	&=\frac{-1}{2\sqrt{(1-x^2)(1-y^2)}}\bigg(\frac{\mathfrak{R}(ab)-\rho}{|ab-\rho|^2}-\frac{\mathfrak{R}(a\overline{b})-\rho}{|a\overline{b}-\rho|^{2}}\bigg),
\end{align*}
where $\mathfrak{R}(z)$ means the real part of $z$. Indeed, we have that
\begin{equation}
\label{eq:KernelBound}
    \sum_{q=0}^{\infty} \left|\widetilde{U}_q(x)\widetilde{U}_q(y)\rho^{q}\right| \leq \frac{2}{\sqrt{(1-x^2)(1-y^2)}(1-\rho)^2},
\end{equation}
i.e., the series defining $K_\rho(x,y)$ is absolutely convergent. We can also easily verify that 
\begin{align*}
\mathfrak{R}(ab)
  &=xy-\sqrt{(1-x^2)(1-y^2)}\\
\mathfrak{R}(a\overline{b})
  &=xy+\sqrt{(1-x^2)(1-y^2)},
\end{align*}
and 
\begin{align*}
|ab-\rho|^2
  &=1-2xy\rho+2\rho\sqrt{(1-x^2)(1-y^2)}+\rho^2\\
|a\overline{b}-\rho|^2
  &=1-2xy\rho-2\rho\sqrt{(1-x^2)(1-y^2)}+\rho^2.
\end{align*}
Putting these identities together, we deduce
\begin{equation*}
    \sum_{q=0}^{\infty}\widetilde{U}_q(x)\widetilde{U}_q(y)\rho^{q} =\frac{1-\rho^2}{4\rho^2(x^2+y^2)-4xy\rho(1+\rho^2)+1-2\rho^2+\rho^4}.
\end{equation*}
From the previous analysis, it easily follows 
\begin{equation}
\label{eq:KernelReduction}
    K_{\rho}\big(x,y\big) =\frac{1-\rho^2}{\rho^2(x-y)^2-xy\rho(1-\rho)^2+(1-\rho^2)^2},
\end{equation}
from where we deduce the identity \eqref{eq:Kform}.

In order to establish positivity, assume that $K_\rho(x_0,y_0) < 0$ for some $x_0,y_0\in(-2,2)$. Observe that the denominator of the right hand side of \eqref{eq:KernelReduction} is a continuous function w.r.t.~$(x,y)\in(-1,1)^2$. Since $K_\rho(0,0)>0$, the denominator should vanish at some point $(x,y)\in(-1,1)^2$. However, this contradicts the inequality in \eqref{eq:KernelBound} as $K_\rho(x,y)$ should blow up. Therefore, $K_{\rho}(x,y)>0$ for all $x,y\in(-2,2)$. Furthermore, by the bound in \eqref{eq:KernelBound} we conclude that $K_\rho$ is integrable over $(-2,2)$. The proof is now complete.
\end{proof}

\noindent \textbf{Acknowledgements.} JCP  acknowledges support from  the Royal Society and CONACyT (CB-250590). This work was concluded whilst JCP was on sabbatical leave holding a David Parkin Visiting Professorship  at the University of Bath, he gratefully acknowledges the kind hospitality of the Department and University. This work was started when AJ was postdoctoral researcher jointly at the University of Luxembourg and the National University of Singapore.

\bibliographystyle{plain}
\bibliography{bibliography}

\end{document}